\documentclass[11pt,a4paper]{amsart}

\usepackage{amsfonts,amsmath,amssymb,amsthm}
\usepackage[utf8]{inputenc}
\usepackage[T1]{fontenc}

\usepackage{hyperref}   
\usepackage{todonotes}
\usepackage{tikz}   
\usetikzlibrary{shapes,arrows,calc,matrix,backgrounds,positioning,decorations.pathmorphing,patterns,arrows.meta,cd}

\usepackage{float}  


\newcommand{\Rr}{\mathbb{R}}

\renewcommand{\le}{\leqslant}

\renewcommand{\epsilon}{\varepsilon}

\newcommand{\defi}[1]{\emph{#1}}

{\theoremstyle{plain}
\newtheorem{theorem}{Theorem}[section]    
\newtheorem{lemma}[theorem]{Lemma}       
\newtheorem{proposition}[theorem]{Proposition}      
\newtheorem{corollary}[theorem]{Corollary}      
}
{\theoremstyle{remark}

\newtheorem{definition}[theorem]{Definition}      
\newtheorem*{remark*}{Remark}  
\newtheorem*{question*}{Question}  
\newtheorem{remark}[theorem]{Remark}   
\newtheorem{example}[theorem]{Example}
}

\makeatletter
\def\subsection{\@startsection{subsection}{2}%
	\z@{.5\linespacing\@plus.7\linespacing}{.3\linespacing}%
	{\normalfont\bfseries}}
\makeatother

\newcommand{\doubletilde}[1]{\tilde{\raisebox{0pt}[0.85\height]{$\tilde{#1}$}}}

\newcommand{
	\begin{center}
		\small
		\tikzstyle{every picture}=[scale=1.0*]
		\input{}
	\end{center}}[2]{
	\begin{center}
		\small
		\tikzstyle{every picture}=[scale=1.0*#1]
		\input{#2}
	\end{center}}

\usepackage[a4paper]{geometry}
\geometry{top=2.5cm, bottom=2.5cm, left=2.5cm, right=2.5cm, marginparsep=1cm}
\setlength{\parindent}{0mm}
\linespread{1.1}

\title{Poincar\'e--Reeb graphs of real algebraic domains}

\author{Arnaud Bodin}
\author{Patrick Popescu-Pampu}
\author{Miruna-\c Stefana Sorea}

\email{arnaud.bodin@univ-lille.fr}
\email{patrick.popescu-pampu@univ-lille.fr}
\email{mirunastefana.sorea@sissa.it}

\address{Universit\'e de Lille, CNRS, Laboratoire Paul Painlev\'e, 59000 Lille, France}
\address{Universit\'e de Lille, CNRS, Laboratoire Paul Painlev\'e, 59000 Lille, France}
\address{SISSA - Scuola Internazionale Superiore di Studi Avanzati, 34136 Trieste, Italy and  Lucian Blaga University of Sibiu, Romania}

\subjclass[2020] {Primary 58K05 ; Sec. 05E14, 14P25, 26C}

\keywords{Combinatorial type, Morse theory, Poincar\'e--Reeb graph, Real algebraic curve, Real polynomial functions.}

\date{\today}

\begin{document}

\begin{abstract}	
An \emph{algebraic domain} is a closed topological subsurface 
of a real affine plane whose boundary consists of disjoint smooth connected
components of real algebraic plane curves. 
We study the geometric shape of an algebraic domain by collapsing all vertical segments
contained in it: this yields a \emph{Poincar\'e--Reeb graph}, which is naturally transversal to the foliation by vertical lines.
We show that any transversal graph whose vertices have only 
valencies $1$ and $3$ and are situated on distinct vertical lines can be realized as a Poincar\'e--Reeb graph.
\end{abstract}

\maketitle

\section{Introduction}

An \emph{algebraic domain} 
$\mathcal{D}$ is a closed subset of an affine plane, homeomorphic to a  
surface with boundary, whose boundary $\mathcal{C}$ is a union of disjoint smooth connected components of real algebraic plane curves. This paper is dedicated to the study of the geometric shape of algebraic domains. 

\medskip

\textbf{Context and previous work.}  
In \cite{sorea2018shapes,sorea2020measuring}, 
the third author studied the non-convexity of the disks $\mathcal{D}$ bounded by 
the connected components $\mathcal{C}$ of the levels of 
a real polynomial function $f(x,y)$ contained in sufficiently small neighborhoods 
of strict local minima.  The principle was to collapse to points the maximal vertical segments 
contained inside $\mathcal{D}$. This yielded a special type of 
tree embedded in a topological space homeomorphic to $\Rr^2$. It was called the 
 \emph{Poincar\'e--Reeb tree} associated to $\mathcal{C}$ and to the projection 
$(x,y) \mapsto x$, and it measured the non-convexity of $\mathcal{D}$. 
Conversely, given a tree $T$ of a special kind embedded in a plane, 
\cite[Theorem 3.34]{sorea2018shapes} 
presented a construction of a polynomial function $f(x,y)$ 
with a strict local minimum at $(0,0)$, whose Poincar\'e--Reeb tree near $(0,0)$ is $T$.

The terminology ``Poincar\'e--Reeb'' introduced in \cite[Definition 2.24]{sorea2018shapes} was inspired by a similar construction used in Morse theory, namely by the classical graph 
introduced by Poincar\'e in his study of $3$-manifolds \cite[1904, Fifth supplement, p.~221]{Po}, and rediscovered by Reeb \cite{Re}
in arbitrary dimension. Reeb graphs encode the topology of level sets of real-valued functions on manifolds. Reeb graphs appear as useful tools in the study of singularity theory of differentiable maps; see  \cite{saeki2022}, \cite{MR2828379}. For a survey with a view towards applications in computational topology and data visualization, we refer the reader to \cite{MR3822880} and references therein. Studies of more general Reeb spaces have been done in several recent works such as \cite{MR3505333}, \cite{MR3685708}, \cite{MR2504290}, \cite{MR4472570}. Some very recent work in this area are, for instance, \cite{MR4476514}, \cite{MR4548150}. Applications of Reeb graphs in nonparametric statistics and data analysis are presented for instance in \cite{MR3850451}.

\medskip

\textbf{Poincar\'e--Reeb graphs of real algebraic domains.}  
In this paper we extend the 
previous method of study of non-convexity  to algebraic domains $\mathcal{D}$ 
in $\Rr^2$.  When $\mathcal{D}$ is compact, the collapsing of maximal vertical segments 
contained in it yields a finite planar graph which is not necessarily a tree, called the 
\emph{Poincar\'e--Reeb graph} of $\mathcal{D}$ relative to the vertical direction. 
See Figure \ref{fig:PRgraphOfD} for a first idea of the definition. In it is represented 
also a section of the collapsing map above this graph, called a \emph{Poincar\'e--Reeb 
graph in the source}. It is well-defined up to isotopies stabilizing each vertical line. 
Such a section exists whenever the projection $x : \Rr^2 \to \Rr$ is in addition \emph{generic} relative 
to the boundary $\mathcal{C}$ of $\mathcal{D}$, that is, $\mathcal{C}$ has no vertical 
bitangencies, no vertical inflectional tangencies and no vertical asymptotes.

\begin{figure}[H]
	\begin{center}
		\small
		\tikzstyle{every picture}=[scale=1.0*0.75]
		\input{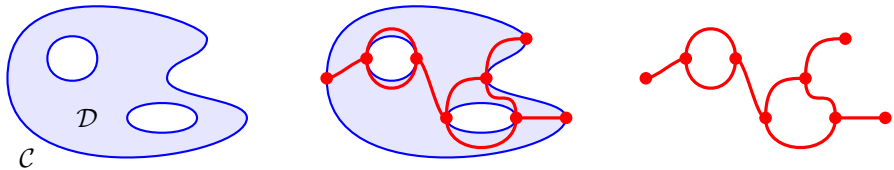}
	\end{center}	
  \caption{A Poincar\'e--Reeb graph: a curve $\mathcal{C}$ bounding a real algebraic domain 
     $\mathcal{D}$ (left); a Poincar\'e--Reeb graph in the source (center); 
     the Poincar\'e--Reeb graph (right).\label{fig:PRgraphOfD}}
\end{figure}

When $\mathcal{D}$ 
is non-compact but the projection $x : \Rr^2 \to \Rr$ is still proper in restriction to it, one gets 
an analogous graph, which has this time at least one unbounded edge. When the properness 
assumption on the projection is dropped but one assumes instead its genericity relative 
to  $\mathcal{C}$, then one may still define a Poincar\'e--Reeb graph in the source, again 
well-defined up to isotopies stabilizing the vertical lines.
Notice that the Poincar\'e--Reeb graph does not live in the same space as $\mathcal{D}$ even if the quotient space is homeomorphic to $\Rr^2$; we will work in the context of \emph{vertical planes} (see Definition \ref{def:vertplanes}) which is adapted for both the original plane and its quotient.

\medskip

\textbf{Finite type domains in vertical planes.}  
In order to be able to use our construction of 
Poincar\'e--Reeb graphs for the study of more general subsets of affine planes than algebraic domains, 
for  instance to topological surfaces bounded by semi-algebraic, piecewise smooth or even less 
regular curves, we give a purely topological description of the setting in which it may be 
applied. Namely, we define the notion of \emph{domain of finite type} 
$\mathcal{D}$ inside a \emph{vertical plane} $(\mathcal{P}, \pi)$: here $\pi : \mathcal{P} \to 
\Rr$ is a locally trivial fibration of an oriented topological surface $\mathcal{P}$ 
homeomorphic to $\Rr^2$ and $\mathcal{D}$ is a closed topological subsurface of 
$\mathcal{P}$, such that the restriction $\pi_{|_{\mathcal{D}}}$ is proper and the restriction 
$\pi_{|_{\mathcal{C}}}$ to the boundary $\mathcal{C}$ of $\mathcal{D}$ has a finite number of \emph{topological critical points}.

\medskip

\textbf{Main theorem.} 
Our main result is an answer in the generic case to the following 
question: \emph{given a transversal graph in a vertical plane $(\mathcal{P}, \pi)$, is it possible to find an algebraic domain whose Poincar\'e--Reeb graph is isomorphic to it?} Namely, we show 
that \textbf{each transversal graph whose vertices have valencies $1$ or $3$ and are situated 
on distinct levels of $\pi$ arises 
up to isomorphism from an algebraic domain in $\Rr^2$ such that the function 
$x : \Rr^2 \to \Rr$ is generic relative to it.} 
Our strategy of proof is to first realize the graph via a smooth function. 
Then we recall a Weierstrass-type theorem that approximates any smooth function 
by a polynomial function and we adapt its use in order to control vertical tangencies. 
In this way we realize any given generic compact transversal graph as the Poincar\'e--Reeb 
graph of a compact algebraic domain. Finally, we explain how to construct non-compact 
algebraic domains realizing some of the non-compact transversal graphs. Roughly speaking, 
we do this by adding branches to a compact curve.

\medskip

\textbf{Structure of the paper.}    Section \ref{sec:compactCase} is devoted to the definitions and 
several general properties of the notions vertical plane, finite type domain, Poincar\'e--Reeb graph, real algebraic domain and transversal graph in the compact setting. Section \ref{sec:algebraic} is  dedicated to the case where the real algebraic domain $\mathcal{D}$ is compact and connected. 
In it we present the main result of our paper, namely the algebraic realization of compact, connected, 
generic transversal graphs as Poincar\'e--Reeb graphs of connected algebraic domains (see Theorem \ref{th:realizationA}).
Section \ref{sec:noncompact} presents the case where $\mathcal{D}$ is non-compact and $\mathcal{C}$ is connected. Finally, in Section \ref{sec:general} we focus on the general situation, where $\mathcal{D}$ may be both non-compact and disconnected.

\medskip

\emph{Acknowledgements.}
We thank the referees for their useful remarks and references, which improved the quality of our manuscript. In particular, we are grateful 
for the simplification of the proof of Proposition \ref{th:graph}, using the Vietoris--Begel theorem \cite{MR87106} and for the references to the $\mathcal{C}^k$ Weierstrass approximation theorem. We also thank Antonio Lerario for referring us to  \cite{lerarioStecconi}, for a possible approach towards estimating the degree of the real algebraic domains.
This work was supported in part by the Labex CEMPI  (ANR-11-LABX-0007-01) 
and by the ANR project LISA (ANR-17-CE40-0023-01). The first author thanks the University of Vienna for his visit at the Wolfgang Pauli Institute. M.-\c S. Sorea would like to thank Antonio Lerario for the very supportive working environment during her postdoc at SISSA, in  Trieste, Italy.

\section{Poincar\'e--Reeb graphs of domains of finite type in vertical planes}  
\label{sec:compactCase}

\subsection{Algebraic domains}   

An \defi{affine plane} $\mathcal{P}$ 
is a principal homogeneous space under the action of a real vector 
 space of dimension $2$. 
 It  has a natural structure of real affine surface (the term ``affine'' being 
 taken now in the sense of algebraic geometry) and also a canonical compactification 
 into a real projective plane. Therefore, one may speak of real-valued polynomial functions 
 $f : \mathcal{P} \to \Rr$ as well as of algebraic curves in $\mathcal{P}$ of given degree. 
 We are interested in the following types of surfaces embedded in affine planes:

\begin{definition}
\label{def:algebraicDomain}
 An \defi{algebraic domain} is a closed subset $\mathcal{D}$ of an affine plane, 
 homeomorphic to a surface with boundary, whose boundary $\mathcal{C}$ 
 is a disjoint union of finitely many smooth connected components of real algebraic plane curves. 
\end{definition}

\begin{example}
Consider the algebraic curve $\overline{\mathcal{C}_1}$ of equation $(f_1(x,y)  = 0)$ 
with $f_1(x,y) = y^2 - (x-1)(x-2)(x-3)$ 
and $\overline{\mathcal{C}_2}$ of equation $(f_2(x,y)  = 0)$ with $f_2(x,y) = y^2 - x(x-4)(x-5)$.
Each of these curves has two connected components, a compact one (an \emph{oval} denoted 
by $\mathcal{C}_i$) and a non-compact one.
Let  $\mathcal{D}$ be  the ring surface bounded by $\mathcal{C}_1$ and $\mathcal{C}_2$. 
By definition, it is an algebraic domain. 
\end{example}

\begin{figure}[H]
    \centering
    \includegraphics[scale=0.5]{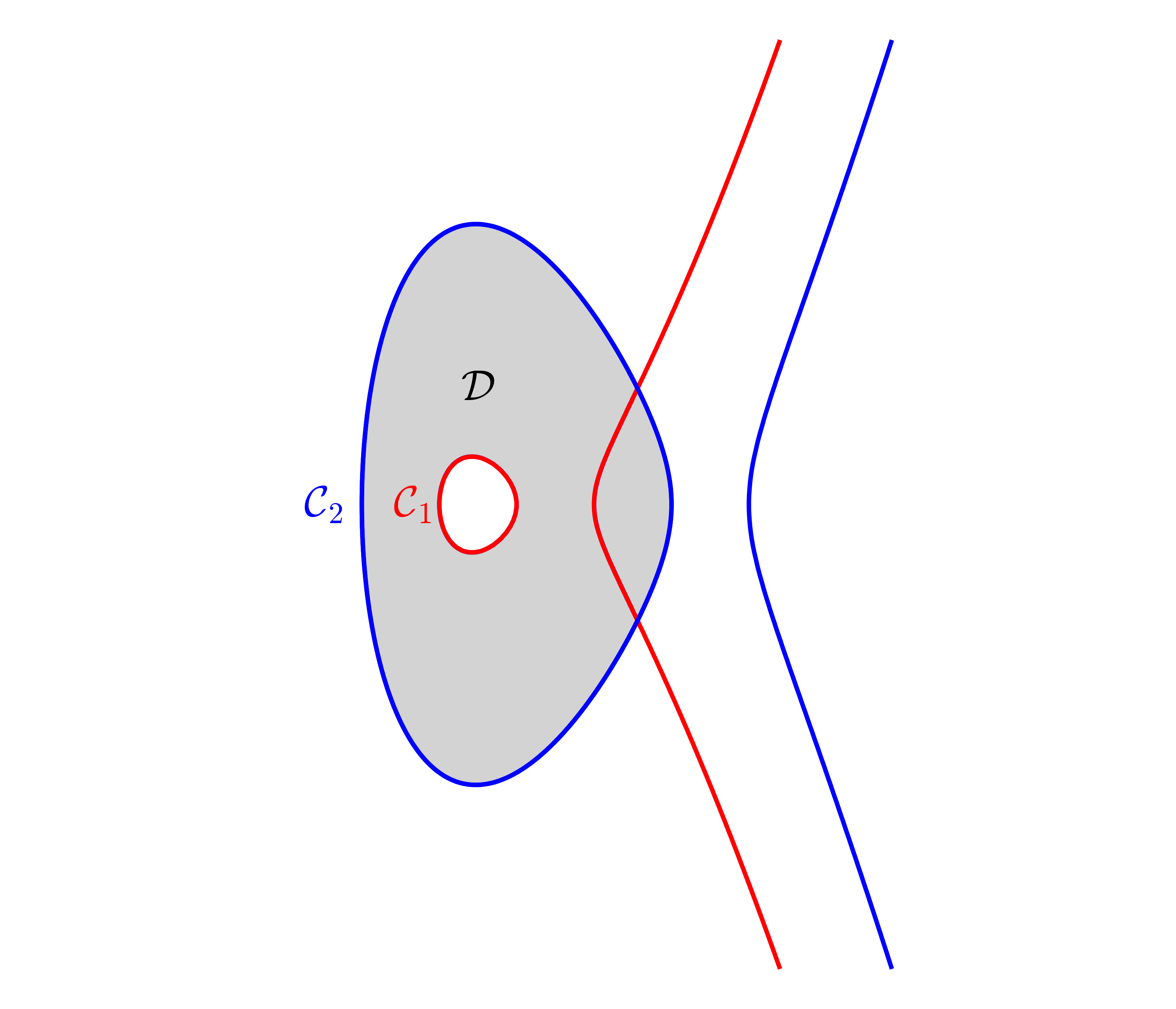}
    \caption{The algebraic domain $\mathcal{D}$ bounded by $\mathcal{C}_1$ and $\mathcal{C}_2$.}
    \label{fig:twoCurves}
\end{figure}

\subsection{Domains of finite type in vertical planes}   
\label{subsec:domfintype}

Assume that $\mathcal{D}$ is an algebraic domain in $\mathbb{R}^2$. 
We will study its non-convexity by collapsing  to points the maximal vertical segments contained 
inside $\mathcal{D}$ (see Definition \ref{def:equivRel} below). 
The image of $\mathbb{R}^2$ by such a collapsing map   
cannot be identified canonically to $\mathbb{R}^2$, and it has not even a canonical 
structure of affine plane. But in many cases it is homeomorphic to $\mathbb{R}^2$, it inherits from the 
starting affine plane $\Rr^2$  a canonical orientation and the function $x : \Rr^2 \to \Rr$ 
descends to it as a locally trivial topological fibration. This 
fact motivates the next definition:

\begin{definition}  \label{def:vertplanes}
   A \defi{vertical plane} is a pair $(\mathcal{P},\pi)$ such that $\mathcal{P}$ is a topological space    
   homeomorphic to $\mathbb{R}^2$, endowed with an orientation, and 
   $\pi:\mathcal{P}\rightarrow\mathbb{R}$ is a locally trivial  topological fibration. 
   The map $\pi$ is called the \defi{projection} of the 
    vertical plane and its fibers are called the \defi{vertical lines} of the vertical plane. 
    A vertical plane $(\mathcal{P},\pi)$ is called \defi{affine} if $\mathcal{P}$ is an affine plane 
    and $\pi$ is affine, that is, a polynomial function of degree one.
    The \defi{canonical affine vertical plane} is $(\Rr^2, x:\Rr^2\rightarrow\Rr)$. 
\end{definition}

Let $(\mathcal{P},\pi)$ be a vertical plane. 
As the projection $\pi$ is locally trivial over a contractible base, it is globally trivializable. 
This implies that $\mathcal{P}$ is homeomorphic to the Cartesian product $\mathbb{R} \times V$, 
where $V$ denotes any vertical line of $(\mathcal{P},\pi)$. 
The assumption that $\mathcal{P}$ is homeomorphic to 
$\mathbb{R}^2$ implies that the vertical lines are homeomorphic to $\mathbb{R}$. 
We will say that a subset of a vertical line of $(\mathcal{P},\pi)$ which is homeomorphic to a usual 
segment of $\Rr$ is a \defi{vertical segment}.

Given a curve in a vertical plane, we may distinguish special points of it:

\begin{definition} 
\label{def:topcrit}
    Let $(\mathcal{P},\pi)$ be a vertical plane and $\mathcal{C}$ a \defi{curve}  
     in it, that is, a closed subset of it  
    which is a topological submanifold of dimension one.  
    The \defi{topological critical set} $\Sigma_{\text{top}} (\mathcal{C})$ of  $\mathcal{C}$ 
    consists of the \defi{topological critical points} of the restriction $\pi_{|_{\mathcal{C}}}$, 
    which are  those points $p\in \mathcal{C}$ in whose neighborhoods the restriction 
    $\pi_{|_{\mathcal{C}}}$ is not a local homeomorhism onto its image. 
\end{definition}

\begin{figure}[H]
	\begin{center}
		\small
		\tikzstyle{every picture}=[scale=1.0*0.65]
		\input{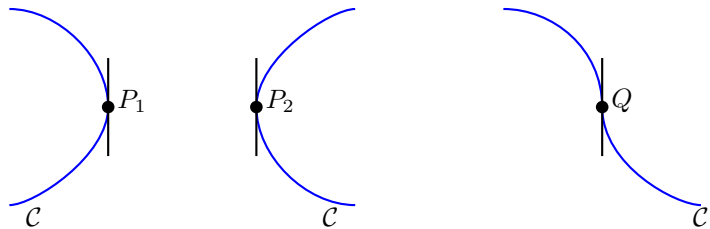}
	\end{center}	
	\caption{Two topological critical points $P_1$ and $P_2$ (which are critical points in the differential setting). The inflection point $Q$ is not a topological critical point but is a critical point in the differential setting.
	\label{fig:topcrit}}
\end{figure}

\begin{remark}   \label{rem:compcritpoints}
If $\mathcal{C}$ is an algebraic curve contained in an affine vertical plane, 
the topological critical set $\Sigma_{\text{top}} (\mathcal{C})$ is contained in the 
usual critical set $\Sigma_{\text{diff}}(\mathcal{C})$ of $\pi_{|_{\mathcal{C}}}$, 
but is not necessarily equal to it. For instance, any inflection point of $\mathcal{C}$ 
with vertical tangency and at which $\mathcal{C}$ crosses its tangent line belongs 
to $\Sigma_{\text{diff}}(\mathcal{C}) \setminus \Sigma_{\text{top}}(\mathcal{C})$ (see Figure \ref{fig:topcrit}).
\end{remark}

The topological critical set $\Sigma_{\text{top}} (\mathcal{C})$ is a closed subset of $\mathcal{C}$. 
In the neighborhood of an isolated topological critical point, the curve has a simple 
behavior:

\begin{lemma}  \label{lem:oneside}
    Let $(\mathcal{P},\pi)$ be a vertical plane and $\mathcal{C}$ a curve in it. Let 
    $p \in \mathcal{C}$ be an isolated topological critical point. 
    Then $\mathcal{C}$ lies locally on one side 
    of the vertical line passing through $p$. Moreover, there exists a neighborhood of $p$ 
    in $\mathcal{C}$, homeomorphic to a compact segment of $\Rr$, and such that 
    the restrictions of $\pi$ to both subsegments of it bounded by $p$ are 
    homeomorphisms onto their images.  
\end{lemma}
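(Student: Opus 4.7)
The plan is to parametrize $\mathcal{C}$ near $p$, exploit the fact that a locally injective continuous function on an interval is strictly monotonic, and rule out the ``monotonic'' configuration on the grounds that it would force $\pi_{|_{\mathcal{C}}}$ to be a local homeomorphism at $p$.

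First I would use the fact that $\mathcal{C}$ is a topological $1$-submanifold to pick a parametrization $\gamma:(-1,1)\to \mathcal{C}$ with $\gamma(0)=p$, whose image $U$ is a neighborhood of $p$ in $\mathcal{C}$. Shrinking the interval if needed, I may assume that $p$ is the only topological critical point of $\pi_{|_{\mathcal{C}}}$ in $U$, so $\pi\circ\gamma$ is a local homeomorphism on $(-1,0)\cup(0,1)$. Since a continuous, locally injective map from an interval to $\Rr$ is strictly monotonic, $\pi\circ\gamma$ is strictly monotonic on $(-1,0)$ and on $(0,1)$ separately. Together with $\pi(\gamma(t))\to \pi(p)$ as $t\to 0$, this means that the image of each half-interval under $\pi\circ\gamma$ is a one-sided open interval with endpoint $\pi(p)$.

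Next I would list the four possible sign patterns for the two monotonicities. In two of them, the left and right branches of $\gamma$ land on opposite sides of $\pi(p)$; in that case $\pi\circ\gamma$, being strictly monotonic on each of $(-1,0)$ and $(0,1)$ with matching limits at $0$, is in fact strictly monotonic on all of $(-1,1)$, hence a homeomorphism onto its image. That would make $\pi_{|_{\mathcal{C}}}$ a local homeomorphism at $p$, contradicting the assumption that $p$ is a topological critical point. Therefore the remaining two patterns occur, meaning both branches of $\gamma$ have $\pi$-image on the same side of $\pi(p)$. This is precisely the assertion that $\mathcal{C}$ lies locally on one side of the vertical line through $p$.

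For the second assertion I would simply restrict $\gamma$ to a compact subinterval $[-\epsilon,\epsilon]\subset(-1,1)$: its image is a neighborhood of $p$ in $\mathcal{C}$ homeomorphic to a compact segment, and on each of $[-\epsilon,0]$ and $[0,\epsilon]$ the composition $\pi\circ\gamma$ is continuous and strictly monotonic (the strict monotonicity on the open half extends to the closed half by continuity), hence a homeomorphism onto its image.

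The main conceptual step is the contradiction argument ruling out the ``through-monotonic'' case; the only mildly delicate point is invoking cleanly the elementary real-analysis fact that a locally injective continuous function on an interval must be strictly monotonic, which is what links the purely topological hypothesis ``$p\in\Sigma_{\text{top}}(\mathcal{C})$'' to the geometric conclusion.
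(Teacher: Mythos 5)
Your proposal is correct and follows essentially the same route as the paper: the paper likewise splits a neighborhood of $p$ into two arcs on either side, observes that the absence of topological critical points forces $\pi$ to be strictly monotonic (hence a homeomorphism onto its image) on each arc, and concludes that both arcs must project to the same side of $\pi(p)$, since otherwise $\pi_{|_{\mathcal{C}}}$ would be a local homeomorphism at $p$. Your version merely makes explicit the parametrization, the four sign patterns, and the elementary fact that a locally injective continuous function on an interval is strictly monotonic, all of which the paper leaves implicit.
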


\begin{proof} 
Consider a compact arc $I$ of $\mathcal{C}$ whose interior is disjoint from 
$\Sigma_{\text{top}} (\mathcal{C})$. Identify it homeomorphically to a bounded  interval 
$[a,b]$ of $\Rr$. The  projection $\pi$ becomes a function $[a,b] \to \Rr$ 
devoid of topological critical points in $(a,b)$, that is, a strictly monotonic function. Consider 
now two such arcs $I_1$ and $I_2$ on both sides of $p$ in $\mathcal{C}$. 
The relative interior of their union $I_1 \cup I_2$ 
is a neighborhood with the stated properties. Moreover, $I_1 \cup I_2$ lies on only one side of the vertical line passing through $p$: 
otherwise $\pi$ would map $I_1$ homeomorphically to $[\alpha,x_0]$ and
$I_2$ homeomorphically to $[x_0,\beta]$, where $x_0=\pi(p)$ is a critical value and as $I_1$ and $I_2$ are on both side of the vertical line at $p$ we would have for instance $\alpha < x_0 < \beta$; this implies that $\pi : I_1 \cup I_2 \to [\alpha,\beta]$ is a homeomorphism, in contradiction with $p$ being a topological 
critical point of $\pi_{|_{\mathcal{C}}}$.
\end{proof}

As explained above, in this paper we are interested in the geometric shape 
of algebraic domains relative to a given ``vertical'' direction. 
But the way of studying them through the collapse of vertical segments may be extended 
to other kinds of subsets of real affine planes, for instance to topological surfaces bounded 
by semi-algebraic, piecewise-smooth or even less regular curves, provided 
they satisfy supplementary properties relative to the chosen projection. 
Definition \ref{def:domainFiniteType} below describes the most general context we could find 
in which the collapsing construction yields a new vertical plane and a finite graph in it, 
possibly unbounded. It is purely topological, involving no differentiability assumptions.

\begin{definition}\label{def:domainFiniteType}
Let $(\mathcal{P},\pi)$ be a vertical plane. Let $\mathcal{D}\subset\mathcal{P}$ be a closed subset 
homeomorphic to a surface with non-empty boundary. Denote by $\mathcal{C}$ its boundary. 
We say that $\mathcal{D}$ is a 
 \defi{domain of finite type} in $(\mathcal{P},\pi)$ if:
\begin{enumerate}
    \item \label{properness} 
         the restriction $\pi_{|_{\mathcal{D}}}: \mathcal{D}\to \Rr$  is proper;
    \item  \label{fincrit}
        the topological critical set $\Sigma_{\text{top}} (\mathcal{C})$ is finite.
\end{enumerate}
\end{definition}

\medskip

\begin{example}
~
\begin{figure}[H]
	\begin{center}
		\small
		\tikzstyle{every picture}=[scale=1.0*0.80]
		\input{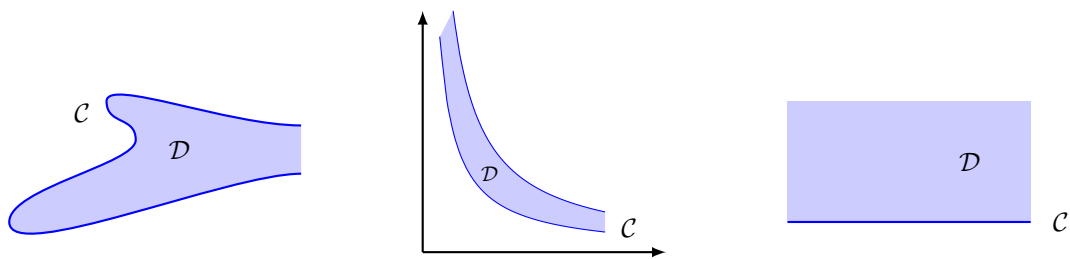}
	\end{center}	
	\caption{One example of a domain of finite type (left). Two examples of domains that are not of finite type (center and right).
	\label{fig:finitetype}}
\end{figure}

Condition (\ref{properness}) implies that the restriction $\pi_{|_{\mathcal{C}}}: \mathcal{C}\to \Rr$ 
is also proper, which means that $\mathcal{C}$ has no connected components which are 
vertical lines or have vertical asymptotes. For instance, consider an algebraic domain 
contained in the positive quadrant of 
the canonical vertical plane $\Rr^2$,  limited by two distinct level curves of the function $xy$ 
(see the middle drawing of Figure \ref{fig:finitetype}). 
It satisfies condition (\ref{fincrit}) as it has no topological critical points, but as 
$\mathcal{C}$  has a vertical asymptote (the $y$-axis),  
it does not satisfy condition (\ref{properness}), therefore it is not a domain of finite type. Note 
that condition (\ref{properness}) is stronger than the properness of $\pi_{|_{\mathcal{C}}}$. 
For instance, the upper half-plane in $(\Rr^2, x)$ does not satisfy condition (\ref{properness}), 
but $x_{|_{\mathcal{C}}}$ 
is proper for it (see the right drawing of Figure \ref{fig:finitetype}).
\end{example}

We distinguish two types of topological critical points on the boundaries of domains of finite type:  

\begin{definition}  \label{def:twotypes} 
      Let $(\mathcal{P},\pi)$ be a vertical plane and $\mathcal{D}\subset\mathcal{P}$ 
      a domain of finite type, whose boundary is denoted by  $\mathcal{C}$. 
      A topological critical point of $\mathcal{C}$ is called:
         \begin{itemize}
             \item an \defi{interior topological critical point of $\mathcal{D}$} 
                  if the vertical line passing through it  
                 lies locally inside $\mathcal{D}$;   
             \item an \defi{exterior topological critical point  of $\mathcal{D}$} 
                  if the vertical line passing through it 
                 lies locally outside $\mathcal{D}$.
         \end{itemize}
\end{definition}

\begin{figure}[H]
	\begin{center}
		\small
		\tikzstyle{every picture}=[scale=1.0*1]
		\input{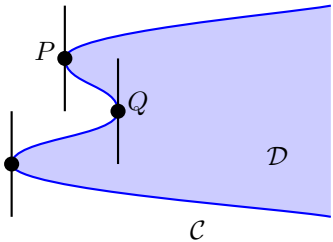}
	\end{center}	
    \caption{Example of an exterior topological critical point $P$ and an interior topological 
        critical point $Q$  of $\mathcal{D}$.}
    \label{fig:intExtCritPoints}
\end{figure}

One has the following consequence of Definition \ref{def:domainFiniteType}:

\begin{proposition}\label{prop:finiteSegments}
    Let $(\mathcal{P},\pi)$ be a vertical plane and $\mathcal{D}\subset\mathcal{P}$ 
      a domain of finite type. Denote by $\mathcal{C}$ its boundary.  Then:
        \begin{enumerate}
           \item each topological critical point of $\pi_{|_{\mathcal{C}}}$ 
               is either interior or exterior in the sense of Definition \ref{def:twotypes};
           \item the fibers of the restriction $\pi_{|_{\mathcal{D}}} : \mathcal{D} \to \Rr$ 
              are homeomorphic to finite disjoint unions of compact segments of $\mathbb{R}$; 
           \item the curve $\mathcal{C}$ has a finite number of connected components.
         \end{enumerate}
\end{proposition}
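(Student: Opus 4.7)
Throughout the argument I would use two tacit facts: $\Sigma_{\text{top}}(\mathcal{C})$ is finite (hence discrete in $\mathcal{C}$), and the restriction $\pi_{|_{\mathcal{C}}}$ is proper, since $\mathcal{C}$ is closed in $\mathcal{D}$ and $\pi_{|_{\mathcal{D}}}$ is proper. For part~(1), let $p \in \Sigma_{\text{top}}(\mathcal{C})$. By finiteness $p$ is isolated in $\Sigma_{\text{top}}(\mathcal{C})$, so Lemma \ref{lem:oneside} applies. I would choose a topological open disk $B$ around $p$ so small that $\mathcal{C} \cap B$ is a sub-arc of the arc provided by the lemma, still lying on one side of $V_p$, with its two endpoints on $\partial B$ and $p$ in its interior. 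Such an arc separates $B$ into two regions; because it remains on one side of $V_p$, the two halves of $V_p \cap B$ around $p$ lie in the same region, namely the one not locally adjacent to the arc. On the other hand, $\mathcal{D}$ being a topological surface with boundary $\mathcal{C}$ means that locally one of these two regions lies inside $\mathcal{D}^\circ$ and the other inside $\mathcal{P}\setminus\mathcal{D}$. Hence $V_p\setminus\{p\}$ lies near $p$ entirely in one of the two, yielding the interior/exterior alternative.

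For part~(2), fix $t\in\Rr$ and set $F_t := \mathcal{D}\cap V_t$. This is compact by properness and sits in $V_t \cong \Rr$, so each of its connected components is automatically a compact interval (possibly a point). To bound their number, I would first observe that the boundary of $F_t$ inside $V_t$ is contained in $\mathcal{C}\cap V_t$: any such boundary point lies in $\mathcal{D}$ and in the closure of $V_t\setminus\mathcal{D}$, hence in $\partial\mathcal{D}=\mathcal{C}$. It therefore suffices to show that $\mathcal{C}\cap V_t$ is finite. This set is compact by properness of $\pi_{|_{\mathcal{C}}}$; at any non-critical point of it, $\pi_{|_{\mathcal{C}}}$ is a local homeomorphism, so the intersection with $V_t$ is locally reduced to a single point; and the topological critical points on $V_t$ are finite in number by hypothesis. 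A compact discrete subset of $\Rr$ is finite, so $F_t$ has finitely many components.

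For part~(3), every connected component of $\mathcal{C}$, being a topological $1$-manifold without boundary, is homeomorphic either to $S^1$ or to $\Rr$. A circle component carries the two extrema of $\pi$ restricted to it, which are distinct topological critical points, so the number of such components is at most $|\Sigma_{\text{top}}(\mathcal{C})|/2$. Among $\Rr$-components, those carrying at least one topological critical point are in number at most $|\Sigma_{\text{top}}(\mathcal{C})|$. For any remaining $\Rr$-component $C$, the restriction $\pi_{|_{C}}$ is a proper local homeomorphism $\Rr\to\Rr$: being open and closed with non-empty image, it is surjective, and then a covering map from a simply connected space, hence a homeomorphism. Each such $C$ meets every vertical line exactly once, so their number is at most $|\mathcal{C}\cap V_{t_0}|$ for any fixed $t_0$, finite by part~(2). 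Summing bounds the total number of components.

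The main delicate step is the local separation argument in part~(1), where Lemma \ref{lem:oneside} must be exploited to ensure that both halves of $V_p$ near $p$ lie in the same local component of $B\setminus\mathcal{C}$, which then forces the dichotomy. Once this local picture is in place, parts~(2) and (3) are relatively standard compactness and covering-space arguments powered by the finiteness of $\Sigma_{\text{top}}(\mathcal{C})$ and the properness of $\pi_{|_{\mathcal{D}}}$.
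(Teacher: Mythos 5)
Your proof is correct, and part (3) takes a genuinely different route from the paper's. For (1) you expand the paper's one-line appeal to Lemma \ref{lem:oneside} into an explicit local separation argument in a small disk, which is sound. For (2) the paper covers the compact fiber $\pi_{|_{\mathcal{D}}}^{-1}(x_0)$ by finitely many well-chosen vertical segments $K_p$ (one case per type of point), whereas you bound the number of components of the fiber by the cardinality of $\mathcal{C}\cap V_t$; both work. For (3) the paper subdivides $\Rr$ by the finite critical image and argues that $\mathcal{C}$ is a finite union of arcs over each complementary interval, while you classify the components of $\mathcal{C}$ as circles or lines, charge each circle at least two critical points (the extrema of $\pi$ on it), charge each line carrying a critical point at least one, and dispose of the critical-point-free lines via the proper-local-homeomorphism/covering argument plus the finiteness of $\mathcal{C}\cap V_{t_0}$ established in (2). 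Your version is arguably more self-contained, since the paper's assertion that each $\pi_{|_{\mathcal{C}}}^{-1}(I_i)$ is a \emph{finite} union of arcs itself rests on essentially your covering-space observation.

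One small gap in (2): you conclude that $\mathcal{C}\cap V_t$ is finite from ``compact and discrete,'' but you only justify discreteness at the non-critical points; knowing merely that the critical points lying on $V_t$ are finitely many does not by itself preclude an infinite set (compare $\{0\}\cup\{1/n : n\geq 1\}$, which is compact, has a single non-isolated point, and is infinite). You need that each topological critical point $p$ with $\pi(p)=t$ is also isolated in $\mathcal{C}\cap V_t$; this is exactly what the second assertion of Lemma \ref{lem:oneside} provides, since each of the two subsegments bounded by $p$ maps injectively under $\pi$ and hence meets $V_t$ only at $p$. With that sentence added, the argument is complete.
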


\begin{proof}  
~
\begin{enumerate}
    \item This follows directly from Lemma \ref{lem:oneside} and Definition \ref{def:twotypes}.
    \item Let us consider a point $x_0\in\mathbb{R}$. By Definition \ref{def:domainFiniteType} 
       (\ref{properness}), since the set $\{x_0\}$ is compact, we obtain that the fiber  
       $\pi_{|_{\mathcal{D}}}^{-1}(x_0)$ is compact. Let now $p$ be a point of this fiber. 
       By looking successively 
       at the cases where $ p \in \mathcal{D} \setminus \mathcal{C}$, 
       $p \in \mathcal{C} \setminus \Sigma_{\text{top}} (\mathcal{C})$, 
        $p$ is an interior and $p$ is an exterior topological critical point, we see that there exists a 
        compact vertical segment $K_p$,  neighborhood 
        of $p$ in the vertical line $\pi^{-1}(x_0)$, such that $\pi_{|_{\mathcal{D}}}^{-1}(x_0) \cap K_p$ 
        is a compact vertical segment. 
        
\begin{figure}[H]
	\begin{center}
		\small
		\tikzstyle{every picture}=[scale=1.0*0.8]
		\input{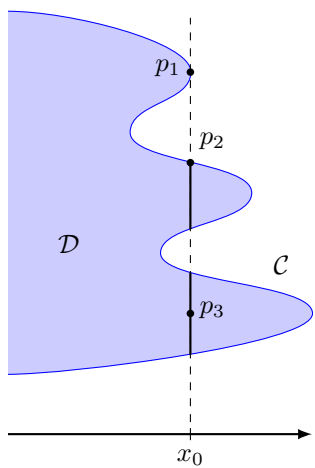}
	\end{center}	
	\caption{Different types of point $p$ in $\pi^{-1}(x_0) \cap \mathcal{D}$.}
	\label{fig:proofcritpoints}
\end{figure}        
        
        As $\pi_{|_{\mathcal{D}}}^{-1}(x_0)$ is compact, it may be 
        covered by a finite collection of such segments $K_p$. This implies that 
        $\pi_{|_{\mathcal{D}}}^{-1}(x_0)$ is a finite union of vertical segments (some of which 
        may be points).
    \item Let $\Delta_{\text{top}}(\mathcal{C}) \subset \Rr$ be the topological critical image of 
       $\pi |_{\mathcal{C}}$, that is, the image $\pi(\Sigma_{\text{top}} (\mathcal{C}))$ of the topological 
       critical set. As by Definition \ref{def:domainFiniteType}, $\Sigma_{\text{top}} (\mathcal{C})$ 
       is finite, $\Delta_{\text{top}}(\mathcal{C})$ is also finite. 
       Therefore, its complement $\Rr \setminus \Delta_{\text{top}}(\mathcal{C})$ 
       is a finite union of open intervals $I_i$. As $\pi_{|_\mathcal{D}}$ is proper, this is also the case 
       of $\pi_{|_\mathcal{C}}$. Therefore, for every such interval $I_i$ 
       the preimage $\pi_{|_\mathcal{C}}^{-1}(I_i)$ is a 
       finite union of arcs. This implies that $\mathcal{C}$ 
       is a finite union of arcs and points, therefore it has a finite number of connected 
       components.
\end{enumerate}
\end{proof}

\subsection{Collapsing vertical planes relative to domains of finite type}

Next definition formalizes the idea of collapsing the maximal vertical segments contained in 
a domain of finite type, mentioned at the beginning of Subsection \ref{subsec:domfintype}.

\begin{definition}   
\label{def:equivRel}
Consider a vertical plane $(\mathcal{P},\pi)$ and let $\mathcal{D}\subset\mathcal{P}$ 
be a domain of finite type. We say that two points $P$ and $Q$ of $\mathcal{P}$ are 
\defi{vertically equivalent relative to $\mathcal{D}$}, denoted $P\sim_{\mathcal{D}} Q$, 
if the following two conditions hold:
\begin{itemize}
    \item $P$ and $Q$ are on the same fiber of $\pi$, that is $\pi(P)=\pi(Q)=: x_0\in\Rr$;
    \item either the points $P$ and $Q$ are on the same connected component of $\pi^{-1}(x_0) \:  \cap \:  \mathcal{D}$, or $P= Q \notin \mathcal{D}$.
\end{itemize}
Denote by $\tilde{\mathcal{P}}$ the quotient $\mathcal{P}/{\sim_{\mathcal{D}}}$ of 
$\mathcal{P}$  by the vertical equivalence relation relative to $\mathcal{D}$. 
We call it the \defi{$\mathcal{D}$-collapse of $\mathcal{P}$}. 
The associated quotient map $\rho_{\mathcal{D}} : \mathcal{P} \to \tilde{\mathcal{P}}$ 
is called the \defi{collapsing map relative to $\mathcal{D}$}. 
\end{definition}

\begin{figure}[H]
	\begin{center}
		\small
		\tikzstyle{every picture}=[scale=1.0*0.8]
		\input{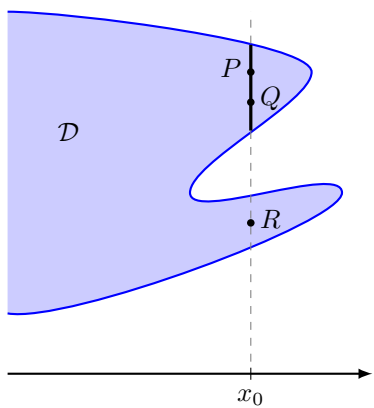}
	\end{center}	
	\caption{The points $P$ and $Q$ are vertically equivalent relative to $\mathcal{D}$: $P\sim_{\mathcal{D}} Q$. However, $P$ and $Q$ are  not equivalent to $R$.}
	\label{fig:defequivalence}
\end{figure} 
Next proposition shows that the $\mathcal{D}$-collapse of $\mathcal{P}$ is naturally a new  vertical plane, which is the reason why we introduced this notion in Definition \ref{def:vertplanes}.

\begin{proposition}    \label{prop:quotientIsAPlane}
  Let $(\mathcal{P},\pi)$ be a vertical plane and $\mathcal{D}$ be a domain of finite type in it. 
  Consider the collapsing map $\rho_{\mathcal{D}} : \mathcal{P} \to \tilde{\mathcal{P}}$ 
  relative to $\mathcal{D}$. Then: 
     \begin{itemize}
        \item $\tilde{\mathcal{P}}$ is homeomorphic to $\Rr^2$;
        \item the projection $\pi$ descends to a function $ \tilde{\pi}: \tilde{\mathcal{P}} \to \Rr$;
        \item $\rho_{\mathcal{D}}$ is a homeomorphism from $\mathcal{P} \setminus \mathcal{D}$ 
             onto its image;
        \item if one endows $\tilde{\mathcal{P}}$ with the orientation induced from that of $\mathcal{P}$ 
           by the previous homeomorphism, then 
           $(\tilde{\mathcal{P}},  \tilde{\pi} )$ is again a vertical plane, 
                and the following diagram is commutative: 
\begin{center}           
\begin{tikzcd}[column sep=small]
 	\mathcal{P} \arrow{rd}[swap]{\pi} \arrow{rr}{\rho_{\mathcal{D}}}    &  & \tilde{\mathcal{P}} \arrow{dl}{\tilde{\pi}} \\
 	 & \Rr & 
 \end{tikzcd}
\end{center}                    
      \end{itemize} 
\end{proposition}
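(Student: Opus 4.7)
The plan is to dispatch the algebraic parts (descent of $\pi$, commutativity of the diagram) immediately, and then concentrate the topological effort on showing $\tilde{\mathcal{P}} \cong \Rr^2$, since once this is in hand the other assertions follow by standard quotient-topology arguments. First, since $P \sim_{\mathcal{D}} Q$ forces $\pi(P) = \pi(Q)$ by Definition \ref{def:equivRel}, the map $\pi$ factors through the quotient, producing $\tilde{\pi} : \tilde{\mathcal{P}} \to \Rr$ making the triangle commute; this handles the second bullet. Next, identify the equivalence classes: they are either singletons $\{Q\}$ with $Q \notin \mathcal{D}$, or connected components of $\pi^{-1}(x) \cap \mathcal{D}$, which by Proposition \ref{prop:finiteSegments}(2) are compact vertical segments, possibly reduced to a point. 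In particular every class is a compact, connected, non-separating subset of $\mathcal{P} \cong \Rr^2$.

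For the main claim $\tilde{\mathcal{P}} \cong \Rr^2$, I would invoke the R.~L.~Moore decomposition theorem (or rather its planar version): any upper semicontinuous decomposition of $\Rr^2$ into compact, connected, non-separating sets has quotient homeomorphic to $\Rr^2$. The only nontrivial hypothesis to check is upper semicontinuity, i.e.\ that for each equivalence class $E$ and each open $U \supset E$ there is a $\sim_{\mathcal{D}}$-saturated open set sandwiched between them. For singleton classes in the open set $\mathcal{P} \setminus \mathcal{D}$ this is routine. For a class $E = \pi^{-1}(x_0) \cap \mathcal{D}$ that is a genuine segment, I would trivialize $\pi$ over a small interval around $x_0$, use Proposition \ref{prop:finiteSegments} to keep the other components of nearby fibers uniformly away from $E$, and use Lemma \ref{lem:oneside} to control what happens when $x_0$ is a topological critical value: an interior critical point merges two nearby segments into one, and an exterior critical point makes a nearby segment shrink to a point and then vanish. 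In both cases a saturated tube around $E$ can be built inside $U$.

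The restriction assertion (third bullet) is then immediate: $\mathcal{P} \setminus \mathcal{D}$ is an open saturated set on which $\rho_{\mathcal{D}}$ is bijective; a quotient map is open on saturated opens, so the restriction is a continuous open bijection onto its image, i.e.\ a homeomorphism. The orientation on $\tilde{\mathcal{P}}$ is transported via this homeomorphism from any open saturated subset of $\mathcal{P} \setminus \mathcal{D}$ whose image is dense in $\tilde{\mathcal{P}}$.

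It remains to check that $(\tilde{\mathcal{P}}, \tilde{\pi})$ is a vertical plane, that is, $\tilde{\pi}$ is a locally trivial fibration; this is where the main work lies. Continuity of $\tilde{\pi}$ is automatic from the quotient topology. For local triviality near a value $x_0 \in \Rr$, I would distinguish two cases. If $x_0 \notin \pi(\Sigma_{\text{top}}(\mathcal{C}))$, then over a small interval $I \ni x_0$ the pair $(\pi^{-1}(I), \pi^{-1}(I) \cap \mathcal{D})$ is homeomorphic to $I \times (\pi^{-1}(x_0), \pi^{-1}(x_0) \cap \mathcal{D})$, so collapsing fiberwise gives $I \times \tilde{\pi}^{-1}(x_0)$ and the quotient fiber is homeomorphic to $\Rr$ (since collapsing finitely many disjoint compact segments of $\Rr$ yields $\Rr$). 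If $x_0$ is a topological critical value, I would use the explicit local model around each critical point provided by Lemma \ref{lem:oneside}: interior critical points create a ``Y''-shaped family of segments whose collapse is nonetheless a topological product, and exterior critical points create a ``V''-shaped family whose collapse is also a product after quotienting. The finiteness in Definition \ref{def:domainFiniteType}(2) ensures that only finitely many such local models need to be spliced together. The main obstacle throughout is the verification of upper semicontinuity, and equivalently the construction of these local trivializations, at critical values; the rest of the argument is essentially bookkeeping.
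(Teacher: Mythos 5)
Your proposal is essentially correct. Note first that the paper does not actually prove this proposition: it only says the argument is ``similar to that of [sorea2020measuring, Proposition 4.3]'', and the constructive spirit of that reference (and of this paper, cf.\ the band decomposition of Definition \ref{def:verticesEdgesPR} and Figure \ref{fig206}) is to cut $\mathcal{P}$ into vertical strips by the critical values and trivialize the collapse strip by strip. You instead invoke the R.~L.~Moore decomposition theorem, which is a legitimate and arguably more robust route: it reduces everything to checking that the classes are compact, connected, non-separating, and that the decomposition is upper semicontinuous. Two points deserve to be made explicit in your version. First, Moore's theorem is stated for $S^2$, so you should one-point compactify and verify upper semicontinuity at $\infty$; this is exactly where the properness of $\pi_{|\mathcal{D}}$ (condition (\ref{properness}) of Definition \ref{def:domainFiniteType}) enters, since it guarantees that the saturation of a compact set is compact. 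Second, upper semicontinuity at a critical value $x_0$ can be obtained more cleanly than by case analysis on local models: any Hausdorff limit of connected components of nearby fibers $\pi^{-1}(x_n)\cap\mathcal{D}$ is a continuum contained in $\pi^{-1}(x_0)\cap\mathcal{D}$ (because $\mathcal{D}$ is closed and the relevant sets stay in a compact region by properness), hence lies in a single component; this rules out the only possible failure. Your treatment of the third bullet (restriction of a quotient map to a saturated open set on which it is injective) is correct, and your two-case construction of local trivializations of $\tilde{\pi}$ is the expected amount of work; the only cosmetic caveat is that the orientation is transported from $\tilde{\mathcal{P}}\setminus\tilde{\mathcal{D}}$, whose components must be checked to receive compatible orientations, a point the paper also glosses over. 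What the paper's (referenced) approach buys is an explicit homeomorphism and hence the local triviality of $\tilde{\pi}$ for free; what yours buys is a shorter path to $\tilde{\mathcal{P}}\cong\Rr^2$ at the cost of citing a nontrivial classical theorem and still having to do the local analysis for $\tilde{\pi}$ afterwards.
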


The proof of Proposition \ref{prop:quotientIsAPlane} is similar to that of \cite[Proposition 4.3]{sorea2020measuring}.

\subsection{The Poincar\'e--Reeb graph of a domain of finite type}    

We introduce now the notion of Poincar\'e--Reeb set associated to a domain of 
finite type $\mathcal{D}$ in a vertical plane $(\mathcal{P},\pi)$. 
Whenever $\mathcal{P}$ is an affine plane 
and $\pi$ is an affine function, its role is to measure the non-convexity of
 $\mathcal{D}$ in the direction of the fibers of $\pi$.

\begin{definition}  \label{def:PRset}
  Let $(\mathcal{P},\pi)$ be a vertical plane and $\mathcal{D}\subset \mathcal{P}$ be a 
  domain of finite type. The \defi{Poincar\'e--Reeb set} of $\mathcal{D}$ 
  is the quotient ${\tilde{\mathcal{D}}} := \mathcal{D}/{\sim_{\mathcal{D}}}$, 
  seen as a subset of the  $\mathcal{D}$-collapse $\tilde{\mathcal{P}}$ of $\mathcal{P}$ 
  in the sense of Definition \ref{def:equivRel}. 
\end{definition}

The Poincar\'e--Reeb set from Definition \ref{def:PRset} has a canonical structure of 
graph embedded 
in the vertical plane $(\tilde{\mathcal{P}},\tilde{\pi})$, a fact which may be proved similarly to  \cite[Theorem 4.6]{sorea2020measuring}. Let us explain first how to get the vertices 
and the edges of $\tilde{\mathcal{D}}$.

\begin{definition}\label{def:verticesEdgesPR}
   Let $\mathcal{D}$ be a domain of finite type in a vertical plane  $(\mathcal{P},\pi)$, 
   and let $\mathcal{C}$ be its boundary. 
   A \defi{vertex} of the Poincar\'e--Reeb set $\tilde{\mathcal{D}}$ 
   is an element of $\rho_{\mathcal{D}}\left(\Sigma_{\text{top}} (\mathcal{C})\right)$.  
   A \defi{critical segment} of $\mathcal{D}$ is a connected component of a fiber 
   of $\pi_{|_{\mathcal{D}}}$ containing at least one element of $\Sigma_{\text{top}} (\mathcal{C})$.
   The \defi{bands} of $\mathcal{D}$ are the closures of the connected 
   components of the complement in $\mathcal{D}$ of the union of critical segments. 
   An \defi{edge} of $\tilde{\mathcal{D}}$ is the image  $\rho_{\mathcal{D}}(R)$ of a band  
   $R$  of $\mathcal{D}$ (see Figure \ref{fig206}).
\end{definition}

\begin{figure}[H]
	\begin{center}
		\small
		\tikzstyle{every picture}=[scale=1.0*0.7]
		\input{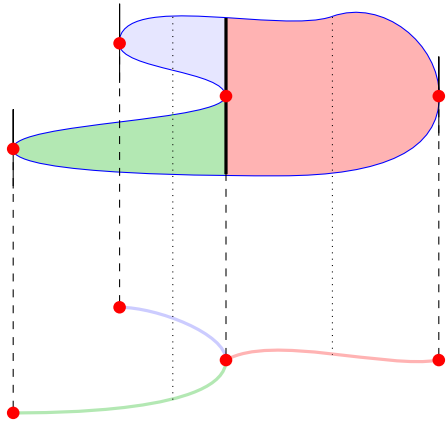}
	\end{center}	
    \caption{Construction of a Poincar\'e--Reeb set.  There are three bands, delimited by four critical segments (three of them are reduced to points). The interior of each edge of the graph is drawn in the same color as the corresponding band.} 
	\label{fig206}
\end{figure}

Each critical segment is either an exterior topological critical point 
in the sense of Definition  \ref{def:twotypes}  or a non-trivial 
segment containing a finite number of interior topological critical points in its interior 
(see Figure \ref{fig:twoInteriorPts} for an example with two such points). 

\begin{figure}[H]
	\begin{center}
		\small
		\tikzstyle{every picture}=[scale=1.0*0.8]
		\input{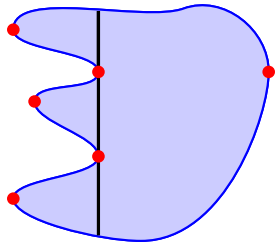}
	\end{center}
    \caption{A critical segment containing two interior topological critical points.} 
	\label{fig:twoInteriorPts}
\end{figure}

Next definition, motivated by Proposition \ref{prop:charcangraphPR} below,  introduces a special type of subgraphs of vertical planes:

\begin{definition}   
\label{def:transversalGraph} 
       Let $(\mathcal{P},\pi)$ be a vertical plane. A \defi{transversal graph} in $(\mathcal{P},\pi)$ 
       is a closed subset $G$ of $\mathcal{P}$ partitioned into finitely many points 
       called \defi{vertices} and subsets homeomorphic to open segments of $\Rr$ called 
       \defi{open edges}, such that:
          \begin{enumerate}
              \item each \defi{edge}, that is, the closure $\overline{E}$ of an open edge $E$, 
                  is homeomorphic to a closed segment of $\Rr$ and 
                  $\overline{E} \setminus E$ consists of $0$, $1$ or $2$ vertices;
              \item \label{condtransv} 
                   the edges are \defi{topologically transversal} to the vertical lines, that is, 
                   the restriction of $\pi$ to each edge is a homeomorphism onto its image in $\Rr$;
              \item \label{condproperedge}
                  the restriction $\pi_{|_{G}} : G \to \Rr$ is proper.
          \end{enumerate}
       A transversal graph is called \defi{generic} if its vertices are of valency $1$ or $3$ 
       and if distinct vertices lie on distinct vertical lines.
\end{definition}

Any transversal graph is homeomorphic to the complement of a subset of the set of vertices of valency $1$ inside a usual finite (compact) graph. This is due to the fact that some of its edges may be unbounded, in either 
one or both directions. Condition (\ref{condproperedge})
 from Definition \ref{def:transversalGraph} 
avoids $G$ having unbounded edges which are asymptotic to a vertical line of $\pi$. Note that 
we allow $G$ to be disconnected and the set of vertices to be empty. In this last case, $G$ is 
a finite union of pairwise disjoint open edges, each of them being sent by $\pi$ homeomorphically 
onto $\Rr$.

Here is the announced description of the canonical graph structure of 
the Poincar\'e--Reeb sets of domains of finite type in vertical planes:

\begin{proposition}   
\label{prop:charcangraphPR}
Let $\mathcal{D}$ be a domain of finite type in a vertical plane  $(\mathcal{P},\pi)$. 
Then each edge of the Poincar\'e--Reeb set 
$\tilde{\mathcal{D}}$ in the sense of Definition \ref{def:verticesEdgesPR} 
is homeomorphic to a closed segment of $\Rr$. Endowed with its vertices and edges, 
$\tilde{\mathcal{D}}$ is a transversal graph in $(\tilde{\mathcal{P}}, \tilde{\pi})$, without 
vertices of valency $2$. 
\end{proposition}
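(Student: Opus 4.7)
The plan is to decompose $\mathcal{D}$ into its bands and critical segments, analyze each band as a trivial topological fibration by compact segments, and then verify the axioms of a transversal graph and compute the vertex valencies by a local analysis at each critical segment. The overall structure of the argument follows that of \cite[Theorem 4.6]{sorea2020measuring}.

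First I would show that $\rho_\mathcal{D}(R)$ is homeomorphic to a closed segment of $\Rr$ for each band $R$. Let $R^\circ$ denote the connected component of $\mathcal{D}$ minus the union of critical segments whose closure in $\mathcal{D}$ is $R$. Since $R^\circ$ meets no critical segment and $\Sigma_{\text{top}}(\mathcal{C})$ is finite, the restriction $\pi|_{R^\circ}\colon R^\circ \to I$, with $I := \pi(R^\circ)$, is a locally trivial fibration over an open interval $I \subset \Rr$ (possibly unbounded) whose fiber is a compact segment; hence $R^\circ \cong I \times [0,1]$. The closure $R$ adjoins exactly one critical segment at each finite endpoint of $I$. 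Under $\rho_\mathcal{D}$, each compact vertical slice of $R^\circ$ and each adjoined critical segment collapses to a single point, so $\rho_\mathcal{D}(R)$ is homeomorphic to the closure of $I$ in $\Rr$ (a compact, half-infinite, or bi-infinite closed interval), and $\tilde{\pi}$ restricts to a homeomorphism onto that closure.

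Next I would verify the axioms of Definition \ref{def:transversalGraph} for $\tilde{\mathcal{D}}$. It is closed in $\tilde{\mathcal{P}}$ because $\mathcal{D}$ is closed and $\sim_\mathcal{D}$-saturated; its vertex set is finite by finiteness of $\Sigma_{\text{top}}(\mathcal{C})$; each edge is, by the previous paragraph, a closed segment transversal to the vertical foliation, with at most two vertices as endpoints. Properness of $\tilde{\pi}|_{\tilde{\mathcal{D}}}$ follows from properness of $\pi|_\mathcal{D}$: for any compact $K \subset \Rr$, the identity $\tilde{\pi}^{-1}(K) \cap \tilde{\mathcal{D}} = \rho_\mathcal{D}(\pi|_\mathcal{D}^{-1}(K))$ exhibits the left-hand side as the continuous image of a compact set.

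Finally I would rule out vertices of valency $2$. Let $v = \rho_\mathcal{D}(s)$ for a critical segment $s$. If $s = \{p\}$ is degenerate, Proposition \ref{prop:finiteSegments}.(1) forces $p$ to be exterior; Lemma \ref{lem:oneside} then shows that $\mathcal{C}$ forms a local one-sided cusp, with $\mathcal{D}$ occupying the interior of this cusp near $p$, so exactly one band abuts $s$ and $v$ has valency $1$. If $s$ is non-degenerate, its endpoints on $\mathcal{C}$ cannot be topological critical points (the vertical there is neither locally inside nor locally outside $\mathcal{D}$), hence all critical points on $s$ are interior and lie in the relative interior of $s$: say $p_1,\dots,p_k$ with $k \ge 1$. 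Each $p_i$ carries a local cusp opening either to the left or to the right of the vertical line through $s$; writing $r$ and $\ell$ for the numbers of right- and left-opening cusps (so $r + \ell = k$), the fiber of $\pi|_\mathcal{D}$ immediately to the right (resp.\ left) of $\pi(s)$ breaks near $s$ into $r+1$ (resp.\ $\ell+1$) components, yielding valency $(r+1) + (\ell+1) = k + 2 \ge 3$. The main obstacle is this last local count: one must set up a careful local model of $\mathcal{D}$ near each $p_i$ to confirm that each interior critical point contributes exactly one extra component on the side to which its cusp opens.
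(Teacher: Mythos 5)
Your proposal is correct and follows exactly the route the paper intends: the paper itself only states that the result ``is straightforward using Proposition \ref{prop:finiteSegments}'' (deferring to \cite[Theorem 4.6]{sorea2020measuring}), and your band decomposition, the fibration structure of each $R^\circ$ with connected compact-segment fibers, and the local valency count via Lemma \ref{lem:oneside} and the interior/exterior dichotomy of Proposition \ref{prop:finiteSegments}~(1) are precisely the omitted details. The one point you flag as delicate --- that each interior critical point on a non-degenerate critical segment contributes exactly one extra fiber component on the side its cusp opens to --- is indeed the heart of the valency computation, and your count $(r+1)+(\ell+1)=k+2\ge 3$ is right.
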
 

The proof is straightforward using Proposition \ref{prop:finiteSegments}.
For an example, see the graph of Figure \ref{fig206}.

\medskip

Proposition \ref{prop:charcangraphPR} allows to give the following definition:

\begin{definition}   
\label{def:PRgraph}
 Let $\mathcal{D}$ be a domain of finite type in a vertical plane  $(\mathcal{P},\pi)$. 
 Its \defi{Poincar\'e--Reeb graph} is the Poincar\'e--Reeb set $\tilde{\mathcal{D}}$ 
 seen as a transversal graph in the $\mathcal{D}$-collapse 
 $(\tilde{\mathcal{P}}, \tilde{\pi})$ of $\mathcal{P}$ in the sense of Definition \ref{def:equivRel}, 
 when one endows it with vertices and edges in the sense of Definition \ref{def:verticesEdgesPR}. 
\end{definition}

The next result explains in which case the Poincar\'e--Reeb graph of a domain of finite type is generic 
in the sense of Definition \ref{def:transversalGraph}:

\begin{proposition}   
\label{prop:whenisPRgeneric}
Let $\mathcal{D}$ be a domain of finite type in a vertical plane  $(\mathcal{P},\pi)$. 
Denote by $\mathcal{C}$ its boundary. 
Then the Poincar\'e--Reeb graph $\tilde{\mathcal{D}}$ is a generic transversal graph in 
$(\tilde{\mathcal{P}}, \tilde{\pi})$ if and only if no two topological critical points 
of $\mathcal{C}$ lie on the same vertical line. 
\end{proposition}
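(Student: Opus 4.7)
The idea is to translate both sides of the equivalence into properties of the \emph{critical segments} of $\mathcal{D}$, and then read the valency of each vertex of $\tilde{\mathcal{D}}$ off a careful local picture. I would first observe that the vertices of $\tilde{\mathcal{D}}$ are in canonical bijection with the critical segments of $\mathcal{D}$: distinct critical segments are either on distinct vertical lines or on the same vertical line but in different connected components of the fiber, hence are not identified by $\rho_{\mathcal{D}}$. I would then classify the critical segments. An exterior critical point $p$ has a vertical-line neighborhood disjoint from $\mathcal{D} \setminus \{p\}$, so its critical segment is the singleton $\{p\}$; conversely, an interior critical point has a full vertical-line neighborhood inside $\mathcal{D}$, and must therefore lie in the relative interior of a non-degenerate critical segment. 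The endpoints of such a segment cannot themselves be critical points, since they are neither interior (one side of the vertical line leaves $\mathcal{D}$) nor exterior (exterior critical points are isolated in the fiber). Hence every critical segment falls into exactly one of two types: a singleton consisting of a unique exterior critical point, or a non-degenerate segment whose critical points are all interior and lie in its relative interior.

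Next I would compute the valency of $\rho_{\mathcal{D}}(S)$ for each critical segment $S$. When $S = \{p\}$ with $p$ exterior, Lemma \ref{lem:oneside} yields two local arcs of $\mathcal{C}$ meeting at $p$ on the same side of the vertical line, together enclosing a single cusp-shaped band meeting $S$, so $\rho_{\mathcal{D}}(p)$ has valency $1$. When $S$ is non-degenerate with interior critical points $Q_1, \ldots, Q_k$, each $Q_i$ produces, again by Lemma \ref{lem:oneside}, a small cusp region not belonging to $\mathcal{D}$, sitting on one side (left or right) of the vertical line through $\pi(S)$. Writing $\ell$ and $r = k - \ell$ for the numbers of left- and right-cusps, the slice $\pi^{-1}(\pi(S) - \epsilon) \cap \mathcal{D}$ near $S$ is a segment interrupted by exactly $\ell$ cusp-gaps, hence has $\ell + 1$ connected pieces, each lying in a distinct band; symmetrically, the slice on the right of $\pi(S)$ produces $r + 1$ pieces. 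Thus $\rho_{\mathcal{D}}(S)$ has valency $\ell + 1 + r + 1 = k + 2$, which equals $3$ precisely when $k = 1$ and is at least $4$ otherwise.

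Combining the two analyses yields the equivalence. If no two critical points share a vertical line, then every critical segment contains exactly one critical point, so each vertex has valency $1$ or $3$, and distinct critical segments project to distinct points of $\Rr$, giving both conditions in the definition of genericity. Conversely, if two critical points $p, q$ share a vertical line, either they lie in the same non-degenerate critical segment (which is then forced to contain at least two interior critical points, so its vertex has valency $\geq 4$) or in two distinct critical segments on that vertical line (giving two distinct vertices projecting to the same point of $\Rr$); either case contradicts genericity. The main obstacle I foresee is the local band count on a critical segment containing several interior critical points: one must verify that the cusps produced by different $Q_i$'s are pairwise disjoint and create independent gaps in the adjacent slices, which should follow from Lemma \ref{lem:oneside} applied at each $Q_i$ together with the finiteness of $\Sigma_{\text{top}}(\mathcal{C})$.
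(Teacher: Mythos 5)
Your proof is correct and follows essentially the same route as the paper's, which simply asserts that valency-$1$ vertices correspond to exterior topological critical points and valency-$3$ vertices to interior ones; your classification of critical segments and the valency count $(\ell+1)+(r+1)=k+2$ is a careful fleshing-out of exactly that correspondence, and your handling of the converse (two critical points in one segment giving valency $\geq 4$, versus two segments on one vertical line giving two vertices on the same level) matches the two ways genericity can fail in Definition \ref{def:transversalGraph}. No gaps.
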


\begin{proof}
This follows from Definition \ref{def:transversalGraph}, Definition \ref{def:twotypes} and Proposition \ref{prop:finiteSegments} (3). Vertices of valency $1$ of the Poincar\'e--Reeb graph correspond to exterior topological critical points, whereas vertices of valency $3$ correspond to interior topological critical points. 
\end{proof}

This proposition motivates:
\begin{definition}   
\label{def:gendom}
A domain of finite type in a vertical plane  is called \defi{generic} if no two topological critical points of its boundary lie on the same vertical line.
\end{definition}

Below we will define a related notion of generic direction with respect to an algebraic domain 
(see Definition \ref{def:genericDir}). For algebraic domains of finite type, up to a small rotation the vertical direction is generic, see Remark \ref{rem:MorseFunction} below. In other words, for all but a finite number of directions the projection is generic.

\subsection{Algebraic domains of finite type}    

Let us consider algebraic domains in the canonical affine vertical plane $(\Rr^2, x)$ 
(see Definition \ref{def:vertplanes}), 
in the sense of Definition \ref{def:algebraicDomain}. Not all 
of them are domains of finite type. For instance, the closed half-planes 
or the surface bounded by the hyperbolas $(xy = 1)$ and  $(xy = -1)$
are not of finite type, because the restriction of the projection $x$ to the domain is not proper.
Next proposition shows that this properness characterizes 
the algebraic domains which are of finite type, and that it may be checked simply:

\begin{proposition}  
\label{prop:whenalgdomfintype}
Let $(\mathcal{P},\pi)$ be an affine vertical plane and let $\mathcal{D}$ be an algebraic domain in it. Then the following conditions are equivalent:
\begin{enumerate}
     \item $\mathcal{D}$ is a domain of finite type.
     \item The restriction $\pi_{|_{\mathcal{D}}}: \mathcal{D} \to \Rr$ is proper.
     \item One fiber of $\pi_{|_{\mathcal{D}}}: \mathcal{D} \to \Rr$ is compact 
        and the boundary $\mathcal{C}$ of $\mathcal{D}$ does not contain vertical lines 
        and does not possess vertical asymptotes. 
 \end{enumerate}
\end{proposition}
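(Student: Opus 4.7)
My plan is to prove the cycle of implications $(1) \Rightarrow (2) \Rightarrow (3) \Rightarrow (1)$. The first two implications are essentially routine, while $(3) \Rightarrow (1)$ contains the substance of the proposition and splits into two independent tasks: deducing properness of $\pi_{|_{\mathcal{D}}}$ from the geometric hypotheses, and deducing finiteness of $\Sigma_{\text{top}}(\mathcal{C})$ from the algebraicity of $\mathcal{C}$. The implication $(1) \Rightarrow (2)$ is immediate from Definition \ref{def:domainFiniteType}. For $(2) \Rightarrow (3)$, a proper map has compact fibers; a vertical line contained in $\mathcal{C}$ would lie in $\mathcal{D}$ and yield a non-compact fiber; and a vertical asymptote of $\mathcal{C}$ would produce an escape sequence in $\mathcal{C} \subseteq \mathcal{D}$ with bounded $\pi$-image, again contradicting properness.

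For the properness part of $(3) \Rightarrow (1)$, I first upgrade the hypotheses to properness of $\pi_{|_{\mathcal{C}}}$: a sequence of points of $\mathcal{C}$ escaping to infinity with bounded $x$-coordinate would force, after extraction, a limit $x_0$ with $|y| \to \infty$, that is, a vertical asymptote. Next I introduce the open sets $U_+ := \{(x,y) \in \mathcal{P} : (x,y') \notin \mathcal{C} \text{ for every } y' \geq y\}$ and its mirror $U_-$. Properness of $\pi_{|_{\mathcal{C}}}$ bounds the $y$-coordinates of $\mathcal{C} \cap \pi^{-1}([a,b])$ by some $M$ on every compact $[a,b]$, so any two points of $U_+$ may be joined by first rising to a height above $M$ and then translating horizontally; hence $U_+$ (and symmetrically $U_-$) is connected. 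Because $\mathcal{D}$ is the closure of a union of connected components of $\mathcal{P} \setminus \mathcal{C}$, each of $U_\pm$ is either contained in the interior of $\mathcal{D}$ or disjoint from it. The compact-fiber hypothesis at $x_0$ prevents $\mathcal{D}$ from containing the upper or lower unbounded rays of $\pi^{-1}(x_0) \setminus \mathcal{C}$, which lie in $U_+$ and $U_-$ respectively; so $\mathcal{D} \cap (U_+ \cup U_-) = \emptyset$. For any compact $K = [a,b]$ with the associated bound $M$, every point of $\pi^{-1}(K)$ with $|y| > M$ lies in $U_+ \cup U_-$; hence $\pi_{|_{\mathcal{D}}}^{-1}(K) \subseteq [a,b] \times [-M,M]$, and being closed and bounded it is compact.

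To complete $(3) \Rightarrow (1)$, it remains to show that $\Sigma_{\text{top}}(\mathcal{C})$ is finite. By Definition \ref{def:algebraicDomain}, $\mathcal{C}$ has finitely many smooth components $C_i$, each contained in an algebraic set $V(f_i)$, which we may take to be defined by a square-free $f_i$. By Remark \ref{rem:compcritpoints}, $\Sigma_{\text{top}}(C_i) \subseteq V(f_i) \cap V(\partial_y f_i)$. If this intersection were infinite, $f_i$ and $\partial_y f_i$ would share an irreducible factor $g$; but $\deg_y \partial_y g < \deg_y g$ would then force $\partial_y g \equiv 0$, making $V(g)$ a union of vertical lines and contradicting the no-vertical-lines hypothesis of (3). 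Hence each $\Sigma_{\text{top}}(C_i)$, and therefore $\Sigma_{\text{top}}(\mathcal{C})$, is finite. The main obstacle throughout is the connectivity argument in the properness step: identifying the two ``cap'' regions $U_\pm$ and exploiting properness of $\pi_{|_{\mathcal{C}}}$ to let compactness of a single fiber propagate to compactness of all of them.
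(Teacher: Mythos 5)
Your proposal is correct, but it reorganizes the equivalence and, in the decisive properness step, uses a genuinely different argument from the paper. The paper proves $(2)\Rightarrow(1)$, $(1)\Rightarrow(3)$, $(3)\Rightarrow(2)$; you prove the reverse cycle, so the two substantive tasks — finiteness of $\Sigma_{\text{top}}(\mathcal{C})$ and properness of $\pi_{|_{\mathcal{D}}}$ — both land inside your $(3)\Rightarrow(1)$. For the finiteness of the critical set your argument is essentially the paper's in different clothing: the polar curve $\mathcal{P}_{\pi}(\overline{\mathcal{C}_i})$ used there is exactly the curve $(\partial_y f_i=0)$, and your ``a common irreducible factor $g$ forces $\partial_y g\equiv 0$'' step plays the role of their Bézout/degree count. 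Both versions leave the same small degeneracies implicit: a common factor $g\in\Rr[x]$ may have empty real zero locus (in which case it contributes nothing to the real intersection), and a real vertical line contained in $(f_i=0)$ must still be placed inside the connected component $\mathcal{C}_i$ before it contradicts (3) — which does follow, since such a line is connected and meets $\mathcal{C}_i$ in infinitely many points, hence lies in $\mathcal{C}_i$. Where you genuinely diverge is the properness step. The paper subdivides $\Rr$ by the finite critical image, observes that $\mathcal{D}$ is a finite union of bands over each subinterval, and propagates compactness of fibers from one subinterval to its neighbours; this argument leans on the finiteness of $\Sigma_{\text{top}}(\mathcal{C})$. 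Your argument with the connected ``cap'' regions $U_{\pm}$ is more global and more elementary: it uses only the properness of $\pi_{|_{\mathcal{C}}}$ together with the fact that $\mathcal{D}\setminus\mathcal{C}$ is a union of connected components of $\mathcal{P}\setminus\mathcal{C}$, it bypasses the band decomposition entirely, and it traps $\pi_{|_{\mathcal{D}}}^{-1}([a,b])$ directly in a compact box $[a,b]\times[-M,M]$. This buys a cleaner proof that would apply verbatim to any closed curve with proper vertical projection, at the modest cost of verifying the connectedness of $U_{\pm}$, which you do correctly by rising above the bound $M$ and translating horizontally.
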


\begin{proof} \emph{Let us prove first the implication (2) $\Rightarrow$ (1).} It is enough to show that 
  $\Sigma_{\text{top}} (\mathcal{C})$ is a finite set. 
The properness of $\pi_{|_{\mathcal{D}}}$ shows that $\mathcal{C}$ contains no vertical line. 
The set  of topological critical points being included in the set $\Sigma_{\text{diff}} (\mathcal{C})$ 
of differentiable critical points of $\pi|_{\mathcal{C}}$, it is enough to prove that this last set 
is finite. Consider a connected component $\mathcal{C}_i$ of $\mathcal{C}$ and its Zariski closure 
$\overline{\mathcal{C}_i}$ in $\mathcal{P}$. Let  $\mathcal{P}_{\pi}(\overline{\mathcal{C}_i})$  
be  its polar curve relative to $\pi$ (see \cite[Definition 2.43]{sorea2018shapes}). It is again 
an algebraic curve in $\mathcal{P}$, of degree smaller than the irreducible algebraic curve 
$\overline{\mathcal{C}}_i$. Therefore, the set 
$\overline{\mathcal{C}_i} \cap \mathcal{P}_{\pi}(\overline{\mathcal{C}_i})$ is finite, by Bézout's theorem. But this set 
contains $\mathcal{C}_i \cap \Sigma_{\text{diff}} (\mathcal{C}) $, which shows that $\pi|_{\mathcal{C}}$ 
has a finite number of differentiable critical points on each connected component $\mathcal{C}_i$. 
As $\mathcal{C}$ has a finite number of such components, we get that 
$\Sigma_{\text{diff}} (\mathcal{C})$ is indeed finite.

\emph{Let us prove now that (1) $\Rightarrow$ (3).} 
   Since $\mathcal{C}\subset\mathcal{D}$, we have by the properness condition of Definition \ref{def:domainFiniteType} (\ref{properness}) that  $\mathcal{C}$ does not contain vertical lines. Moreover, 
   if the boundary $\mathcal{C}$ of $\mathcal{D}$  possessed a vertical asymptote, 
   then we would obtain a contradiction with Definition \ref{def:domainFiniteType} 
   (\ref{properness}). Finally, since $\pi_{|_{\mathcal{D}}}$ is proper, 
   each of its fibers is compact.

\emph{Finally we prove that (3) $\Rightarrow$ (2).} 
   Since the boundary $\mathcal{C}$ of $\mathcal{D}$ does not contain vertical lines 
   and does not possess vertical asymptotes, the restriction $\pi |_{\mathcal{C}}$ is 
   proper. Moreover, it has a finite number of differentiable critical points, as the above proof 
   of this fact used only the absence of vertical lines among the connected 
   components of $\mathcal{C}$. We argue now similarly to our proof of 
   Proposition \ref{prop:finiteSegments} (3), 
   by subdividing $\Rr$ using the points of the topological critical image 
   $\Sigma_{\text{top}} (\mathcal{C})$. This set is finite, therefore $\Rr$ gets subdivided 
   into finitely many closed intervals. Above each one of them, $\mathcal{C}$ consists 
   of finitely many transversal arcs. If one fiber of $\pi_{|_{\mathcal{D}}}$ above such an 
   interval $I_j$ is compact, it means that $\pi_{|_{\mathcal{D}}}^{-1}(I_j)$ is a finite 
   union of bands bounded by pairs of such transversal arcs and compact vertical segments, 
   therefore $\pi_{|_{\mathcal{D}}}$ 
   is proper above $I_j$. In particular, its fibers above the extremities of $I_j$ are also 
   compact. In this way we show by progressive propagation from each interval with 
   a compact fiber to its neighbors, that $\pi_{|_{\mathcal{D}}}$ is proper above each 
   interval of the subdivision of $\Rr$ using $\Sigma_{\text{top}} (\mathcal{C})$. This implies 
   the properness of $\pi_{|_{\mathcal{D}}}$.
\end{proof}

Let us explain now a notion of \emph{genericity} of an affine function on an affine plane relative 
to an algebraic domain:

\begin{definition}    
\label{def:genericDir}
Let $\mathcal{D}$ be an algebraic domain in an affine vertical plane $(\mathcal{P},\pi)$, 
and let $\mathcal{C}$ be its boundary. The projection 
$\pi$ is called  \defi{generic with respect to $\mathcal{D}$} if 
$\mathcal{C}$ does not contain vertical lines and does not possess vertical asymptotes,  
vertical inflectional tangent lines and vertical multitangent lines (that is, vertical lines 
tangent to $\mathcal{C}$ at least at two points, or to a point of multiplicity greater than two). 
\end{definition}

\begin{remark}
\label{rem:MorseFunction}
Let $\mathcal{D}$ be an algebraic domain in an affine plane $\mathcal{P}$. 
Except for a finite number of directions of their fibers,  
all affine projections are generic with respect to $\mathcal{D}$ (see \cite[Theorem 2.13]{sorea2019permutations}). 
Note that the affine projection $\pi$ is generic with respect to $\mathcal{D}$ 
if and only if the restriction of $\pi$ to $\mathcal{C}$ is a proper excellent Morse function, 
i.e. all the critical points of $\pi_{|\mathcal{C}}$ are of Morse type and are situated 
on different level sets of $\pi_{|\mathcal{C}}$. Note also that if the algebraic domain 
$\mathcal{D}$ is moreover of finite type and $\pi$ is generic with respect to it in the sense 
of Definition \ref{def:genericDir}, then $\mathcal{D}$ is generic in the sense of Definition 
\ref{def:gendom}.
\end{remark}

\begin{proposition}  
\label{prop:gendomgengraph}
Let $\mathcal{D}$ be an algebraic domain of finite type in an affine vertical plane $(\mathcal{P},\pi)$. 
Assume that $\pi$ is generic with respect to $\mathcal{D}$ in the sense of Definition 
\ref{def:genericDir}. Then its Poincar\'e--Reeb graph 
is generic in the sense of Definition \ref{def:transversalGraph}. 
\end{proposition}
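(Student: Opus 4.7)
The plan is to reduce the proposition to the criterion given by Proposition \ref{prop:whenisPRgeneric}, which says that $\tilde{\mathcal{D}}$ is a generic transversal graph if and only if no two topological critical points of $\mathcal{C}$ lie on the same vertical line. So the task is to check, under the hypotheses of Definition \ref{def:genericDir} plus the finite-type assumption, that this ``no two on one vertical line'' condition is forced.

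First I would identify the topological critical set with a cleaner set of critical points. By Remark \ref{rem:compcritpoints}, $\Sigma_{\text{top}}(\mathcal{C}) \subseteq \Sigma_{\text{diff}}(\mathcal{C})$, and the points that sit in the difference $\Sigma_{\text{diff}}(\mathcal{C}) \setminus \Sigma_{\text{top}}(\mathcal{C})$ are exactly the smooth points of $\mathcal{C}$ where the tangent line is vertical and $\mathcal{C}$ crosses that tangent (vertical inflectional tangencies). The genericity hypothesis (Definition \ref{def:genericDir}) rules these out, so under our assumptions $\Sigma_{\text{top}}(\mathcal{C}) = \Sigma_{\text{diff}}(\mathcal{C})$. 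Moreover, Remark \ref{rem:MorseFunction} notes that generic $\pi$ is equivalent to $\pi_{|\mathcal{C}}$ being a proper excellent Morse function, so $\Sigma_{\text{diff}}(\mathcal{C})$ consists of finitely many Morse critical points.

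Next I would translate the two remaining conditions in Definition \ref{def:genericDir} into the desired ``distinct vertical lines'' statement. Two topological critical points $p \neq q$ could a priori lie on the same vertical line $V = \pi^{-1}(x_0)$ in only two ways: either $p$ and $q$ belong to the same connected component of $\mathcal{C}$ and $\pi_{|\mathcal{C}}$ takes the same critical value at both, or $p$ and $q$ belong to different components that happen to be tangent to $V$ at different points (respectively to a single point of multiplicity $\geq 2$). The first possibility is forbidden because $\pi_{|\mathcal{C}}$ is an \emph{excellent} Morse function, hence injective on its critical set. The second possibility is exactly the existence of a vertical multitangent line, which is excluded by Definition \ref{def:genericDir}. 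Hence no two points of $\Sigma_{\text{top}}(\mathcal{C})$ project to the same value of $\pi$, and Proposition \ref{prop:whenisPRgeneric} gives the conclusion.

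There is essentially no hard step here: the argument is a bookkeeping exercise matching the three clauses of Definition \ref{def:genericDir} (no vertical inflectional tangent lines, no vertical multitangent lines, no vertical lines or asymptotes) against the two ingredients needed by Proposition \ref{prop:whenisPRgeneric} (topological = differentiable critical points, and injectivity of $\pi_{|\mathcal{C}}$ on this set). The only mild subtlety I would be careful about is the multitangency clause: one must remember that Definition \ref{def:genericDir} rules out \emph{both} a vertical line tangent to $\mathcal{C}$ at two distinct smooth points and a vertical tangency at a singular point of multiplicity $>2$, so that every vertical tangency is a single simple (Morse) tangency, producing exactly one topological critical point on that vertical line.
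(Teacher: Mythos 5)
Your proof is correct and follows essentially the same route as the paper: reduce to Proposition \ref{prop:whenisPRgeneric} and use the fact (Remark \ref{rem:MorseFunction}) that genericity of $\pi$ makes $\pi_{|\mathcal{C}}$ a proper excellent Morse function, so that the topological critical points, being among the differential ones, lie on distinct vertical lines. The only difference is that you additionally establish the equality $\Sigma_{\text{top}}(\mathcal{C}) = \Sigma_{\text{diff}}(\mathcal{C})$, whereas the paper only needs (and only uses) the inclusion $\Sigma_{\text{top}}(\mathcal{C}) \subseteq \Sigma_{\text{diff}}(\mathcal{C})$.
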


\begin{proof}
This is a consequence of   Proposition \ref{prop:whenisPRgeneric} and 
Remark \ref{rem:MorseFunction}, since by Definition \ref{def:topcrit}, the topological  
critical points of $\mathcal{C}$ are among the differential  critical points of the 
vertical projection $\pi |_{\mathcal{C}}$.
\end{proof}

\subsection{The invariance of the Euler characteristic}    

In this section we consider only \emph{compact} domains of finite type. This implies that their  
boundaries are also compact (see Figure \ref{fig:PRgraphOfD} for an example). 
Next result implies that the Betti numbers of the domain and of its Poincar\'e--Reeb graph 
are the same: 

\begin{proposition}   
\label{th:graph}
Let $\mathcal{D}$ be a compact domain of finite type in a vertical plane. Then 
$\mathcal{D}$ and its Poincar\'e--Reeb graph $\tilde{\mathcal{D}}$ are homotopically equivalent. In particular they have the 
same number of connected components and the same Euler characteristic. 
\end{proposition}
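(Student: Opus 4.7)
The plan is to show that the restricted collapsing map $\rho := \rho_{\mathcal{D}}|_{\mathcal{D}} : \mathcal{D} \to \tilde{\mathcal{D}}$ is a homotopy equivalence; both claims then follow at once, since homotopy equivalent spaces have the same number of connected components and the same Euler characteristic.

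The starting point is the band decomposition of $\mathcal{D}$ from Definition \ref{def:verticesEdgesPR}. Using Proposition \ref{prop:finiteSegments}(2) together with Lemma \ref{lem:oneside}, I would first argue that above every closed interval $I = [a,b]$ joining two consecutive topological critical values of $\pi|_{\mathcal{C}}$ (together with the unbounded intervals, which in the compact case do not arise since $\pi|_{\mathcal{D}}$ has compact image), the preimage $\pi_{|_{\mathcal{D}}}^{-1}(I)$ is a finite disjoint union of closed topological rectangles, each homeomorphic to $I \times [0,1]$ with $\pi$ corresponding to projection onto the first factor; each such rectangle is a band. The restriction of $\rho$ to such a band collapses every vertical fiber $\{t\} \times [0,1]$ to a point and thereby identifies the band with an arc, namely the corresponding edge of $\tilde{\mathcal{D}}$; the two vertical sides $\{a\} \times [0,1]$ and $\{b\} \times [0,1]$, which are critical segments, are collapsed to the two endpoints of this edge.

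Next, I would build a homotopy inverse to $\rho$. Let $\sigma : \tilde{\mathcal{D}} \to \mathcal{D}$ be the section that on each band sends $\rho(t,s) \mapsto (t, 1/2)$ and at each vertex chooses a single distinguished point on the corresponding critical segment. Define $H : \mathcal{D} \times [0,1] \to \mathcal{D}$ to move each $p \in \mathcal{D}$ linearly along the connected component of $\pi^{-1}(\pi(p)) \cap \mathcal{D}$ containing it (a compact vertical segment, by Proposition \ref{prop:finiteSegments}(2)) toward $\sigma(\rho(p))$. One then has $H_0 = \mathrm{id}_{\mathcal{D}}$, $H_1 = \sigma \circ \rho$, and $\rho \circ \sigma = \mathrm{id}_{\tilde{\mathcal{D}}}$.

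The main obstacle is the continuity of $\sigma$ and $H$ at the vertices, since a single critical segment can be the common vertical boundary of several bands meeting it and the midpoints of the neighboring bands' fibers need not converge to the chosen distinguished point on the segment. To sidestep this delicate matching, I would prefer to invoke the Vietoris--Begle mapping theorem: $\mathcal{D}$ is a compact surface with boundary and $\tilde{\mathcal{D}}$ is a finite graph, hence both are compact metric ANRs, and $\rho$ is a proper closed surjection all of whose fibers are contractible (each being a singleton or a compact vertical segment, by Definition \ref{def:equivRel} and Proposition \ref{prop:finiteSegments}(2)). Such a map induces isomorphisms on \v{C}ech, hence singular, homology, which yields both the equality of the number of connected components and of the Euler characteristics.
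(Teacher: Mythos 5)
Your final argument is correct, but it follows a genuinely different route from the paper's. The paper handles the two claims separately: the equality of the numbers of connected components is obtained by an elementary point-set argument (the images of the finitely many components of $\mathcal{D}$ under the continuous map $\rho_{\mathcal{D}}$ are pairwise disjoint compact connected sets covering $\tilde{\mathcal{D}}$, hence are exactly its components), and the equality of Euler characteristics is obtained by Viro's integration with respect to the Euler characteristic: since each fiber of $\rho_{\mathcal{D}}|_{\mathcal{D}}$ is a point or a compact segment, hence has $\chi = 1$, the pushforward of $\mathbf{1}_{\mathcal{D}}$ is $\mathbf{1}_{\tilde{\mathcal{D}}}$ and the two integrals agree. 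You instead invoke the Vietoris--Begle mapping theorem for the closed surjection $\rho_{\mathcal{D}}|_{\mathcal{D}}$ between the compact ANRs $\mathcal{D}$ and $\tilde{\mathcal{D}}$, whose fibers are contractible, to get isomorphisms on (\v{C}ech, hence singular) homology in all degrees; this yields both conclusions at once. Both proofs ultimately rest on the same geometric input --- Proposition \ref{prop:finiteSegments}(2), identifying the fibers as points or compact segments --- but your homological argument is stronger (it gives all Betti numbers, not just $\chi$ and $b_0$), at the cost of a heavier black box, whereas the paper's Euler-calculus computation is more self-contained and matches the constructible, fiberwise spirit of the rest of the text. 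You were right to abandon your first attempt at an explicit homotopy inverse: the section $\sigma$ and the vertical retraction $H$ cannot in general be made continuous at a critical segment bordered by several bands, exactly as you suspected, so the Vietoris--Begle (or cell-like map) machinery is the clean way to bypass that matching problem.
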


\begin{proof}
\emph{Connected components.} The collapsing map 
$\rho_{\mathcal{D}}$ of Definition \ref{def:equivRel}  
being continuous, each connected component of $\mathcal{D}$ 
is sent by $\rho_{\mathcal{D}}$ to a connected subset of $\tilde{\mathcal{D}}$. 
Those subsets are compact, as images of compact sets by a continuous map. 
They are moreover pairwise disjoint, by Definition \ref{def:equivRel} 
of the vertical equivalence relation relative to $\mathcal{D}$. Therefore, they are 
precisely the connected components of $\tilde{\mathcal{D}}$, which shows that 
$\rho_{\mathcal{D}}$ establishes a bijection between the connected components of 
$\mathcal{D}$ and $\tilde{\mathcal{D}}$. 

\emph{Homotopy equivalence.} 
We now may assume the $\mathcal{D}$ is connected.
By definition, for any $p \in \tilde{\mathcal{D}}$, $\rho_{\mathcal{D}}^{-1}(p)$ is an interval, then the Vietoris--Begle theorem, as stated by Smale in \cite{MR87106}, proves that $\rho_{\mathcal{D}} : \mathcal{D} \to \tilde{\mathcal{D}}$ induces an isomorphism for the corresponding homotopy groups. By the Whitehead theorem (see \cite[Theorem 4.5]{MR1867354}), we get
a homotopy equivalence between $\mathcal{D}$ and $\tilde{\mathcal{D}}$.
\end{proof}

Note that in Section \ref{sec:general} we will focus on the topology of the boundary curve 
$\mathcal{C}$ of $\mathcal{D}$, in terms of Betti numbers (see Proposition \ref{prop:bordBetti}).
The case where $\mathcal{D}$ is a disk was considered by the third author in her study of 
asymptotic shapes of level curves of polynomial functions $f(x,y) \in \Rr[x,y]$ 
near a local extremum (see \cite{sorea2018shapes,sorea2020measuring}). 

A direct consequence of Proposition \ref{th:graph} is:

\begin{proposition}   
\label{prop:treeCompact}
 If $\mathcal{D}\subset (\mathcal{P},\pi)$ is (homeomorphic to) a disk, then the 
 Poincar\'e--Reeb graph $\tilde{\mathcal{D}}$ of $\mathcal{D}$ is a tree. 
\end{proposition}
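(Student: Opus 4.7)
The plan is to deduce the statement directly from Proposition \ref{th:graph} combined with the characterization of trees among finite graphs via the Euler characteristic. The key inputs are: (a) since $\mathcal{D}$ is compact and of finite type, the topological critical set $\Sigma_{\text{top}}(\mathcal{C})$ is finite, so $\tilde{\mathcal{D}}$ has finitely many vertices; (b) since $\mathcal{D}$ is compact, $\pi_{|_\mathcal{D}}$ has compact image, hence $\tilde{\mathcal{D}}$ sits over a bounded interval of $\Rr$, and by Proposition \ref{prop:charcangraphPR} each of its (finitely many) edges is homeomorphic to a closed segment of $\Rr$. Thus $\tilde{\mathcal{D}}$ is a finite one-dimensional CW complex.

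First I would apply Proposition \ref{th:graph} to the disk $\mathcal{D}$. Since $\mathcal{D}$ is connected, $\tilde{\mathcal{D}}$ has exactly one connected component, so $\tilde{\mathcal{D}}$ is connected. Since $\chi(\mathcal{D}) = 1$, we also have $\chi(\tilde{\mathcal{D}}) = 1$.

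Next, I would invoke the elementary combinatorial fact that for a finite connected graph $G$ one has $\chi(G) = |V(G)| - |E(G)| = 1 - b_1(G)$, where $b_1(G)$ is the first Betti number (the rank of the cycle space). Combined with $\chi(\tilde{\mathcal{D}}) = 1$, this gives $b_1(\tilde{\mathcal{D}}) = 0$, meaning $\tilde{\mathcal{D}}$ contains no cycles. A connected graph without cycles is by definition a tree, which concludes the argument.

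There is essentially no obstacle here: the substantive work has already been done in Proposition \ref{th:graph} (existence of a compatible integration with respect to Euler characteristic in this topological setting) and in Proposition \ref{prop:charcangraphPR} (that $\tilde{\mathcal{D}}$ is genuinely a finite CW graph whose edges are closed segments). The only point worth emphasizing in writing is the compactness hypothesis on $\mathcal{D}$: without it, $\tilde{\mathcal{D}}$ could have unbounded edges and the naive counting formula $\chi = V - E$ would need to be justified through the Euler-characteristic integration framework rather than cited as a textbook combinatorial identity.
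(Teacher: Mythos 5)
Your argument is correct and is essentially the paper's own proof: both deduce from Proposition \ref{th:graph} that $\tilde{\mathcal{D}}$ is connected with $\chi(\tilde{\mathcal{D}})=1$ and then invoke the standard characterization of trees among finite graphs. You merely spell out that characterization via $\chi = |V|-|E| = 1-b_1$, which the paper leaves implicit.
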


\begin{proof}
Proposition \ref{th:graph} implies that $\tilde{\mathcal{D}}$ is connected and that 
$\chi(\tilde{\mathcal{D}}) = 1$. But these two facts characterize the trees among the 
finite graphs. 
\end{proof}

If the disk $\mathcal{D}\subset (\mathcal{P},\pi)$ is an algebraic domain in a vertical affine plane 
and the projection $\pi$ is generic with respect to $\mathcal{D}$ in the sense of 
Definition \ref{def:genericDir}, then Proposition \ref{prop:gendomgengraph} implies that  
the Poincar\'e--Reeb graph $\tilde{\mathcal{D}}$ is a \defi{complete binary tree}: 
each vertex is either of valency $3$ (we call it then {\em interior}) 
or of valency $1$ (we call it then {\em exterior}).

\begin{figure}[H]
	\begin{center}
		\small
		\tikzstyle{every picture}=[scale=1.0*0.8]
		\input{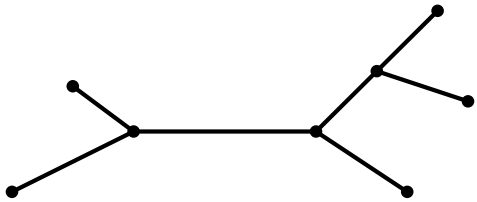}
	\end{center}	
	\caption{The Poincar\'e--Reeb graph of a disk relative to a generic projection} 
	is a complete binary tree.
	\label{fig0102}
\end{figure}

\subsection{Poincar\'e--Reeb graphs in the source}   \label{ssec:PRsource}      
 
Definition \ref{def:PRgraph} of the Poincar\'e--Reeb graph $\tilde{\mathcal{D}}$ 
of a finite type domain $\mathcal{D}$  in a vertical 
plane $(\mathcal{P}, \pi)$  is canonical. However, it yields a graph embedded in a  new vertical plane 
$\tilde{\mathcal{P}}$, which cannot be identified canonically to the starting one. 
When the Poincar\'e--Reeb graph is \emph{generic} in the sense of 
Definition \ref{def:transversalGraph}, it is possible to lift it to the starting plane.

\begin{proposition}  
	\label{prop:existsect}
    Let $\mathcal{D}$ be a finite type domain in a vertical plane $(\mathcal{P}, \pi)$. 
    If the Poincar\'e--Reeb graph $\tilde{\mathcal{D}}$ is generic, then the map 
    $(\rho_{\mathcal{D}})_{|_{\mathcal{D}}} : \mathcal{D} \to \tilde{\mathcal{D}}$ 
    admits a section, which is well defined up to isotopies stabilizing each vertical line.
\end{proposition}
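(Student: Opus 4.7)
The plan is to construct the section $s:\tilde{\mathcal{D}}\to\mathcal{D}$ in two stages—first fixing $s$ on the vertex set of $\tilde{\mathcal{D}}$, then interpolating continuously over each edge via a trivialization of the corresponding band—and then to prove uniqueness up to isotopy by a fiberwise linear interpolation argument.

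At each vertex $v$ of $\tilde{\mathcal{D}}$ I would analyze the value $s(v)$ as follows. By Proposition~\ref{prop:whenisPRgeneric}, $v$ has valency $1$ or $3$. If $v$ has valency $1$, the critical segment $\rho_{\mathcal{D}}^{-1}(v)$ reduces to the exterior critical point itself, forcing $s(v)$ to be that point. If $v$ has valency $3$, the critical segment $K_v=\{x_0\}\times[y_b,y_t]$ is a non-trivial compact vertical segment and contains a unique interior critical point $p=(x_0,y_0)$ in its interior (by genericity and Lemma~\ref{lem:oneside}). I expect $s(v)$ to be forced to equal $p$: by Lemma~\ref{lem:oneside} the curve $\mathcal{C}$ forms a local cusp at $p$ opening to one side of the vertical line (say to the right), so that one band lies on the left of $x_0$ with fiber approaching the full $[y_b,y_t]$, while two bands lie on the right with fibers approaching $[y_0,y_t]$ and $[y_b,y_0]$; the common value of the three one-sided limits of $s$ along $K_v$ must then lie in $[y_b,y_t]\cap[y_0,y_t]\cap[y_b,y_0]=\{y_0\}$.

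Next I would construct $s$ on each edge $E$. On the associated band $R_E$, the upper and lower boundary arcs are graphs $y=y_{\mathrm{top}}(x)$ and $y=y_{\mathrm{bot}}(x)$ of continuous functions of $\pi$ over the interior of $E$ (no topological critical points occur there), so $R_E$ is homeomorphic to $E\times[0,1]$ in a way compatible with $\pi$, the map $\rho_{\mathcal{D}}|_{R_E}$ becoming the first projection. A section over $E$ then corresponds to a continuous function $\sigma:E\to[0,1]$, and I would pick any $\sigma$ matching the already-determined boundary values (linear interpolation on bounded edges, extended by a constant beyond the compact part on unbounded ends). Gluing these local sections over all edges yields a continuous $s:\tilde{\mathcal{D}}\to\mathcal{D}$ with $\rho_{\mathcal{D}}\circ s=\mathrm{id}$; continuity at each vertex is guaranteed by the forced choice of $s(v)$, and injectivity is automatic from the section property.

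For uniqueness up to isotopy, any two sections $s^{(0)}$ and $s^{(1)}$ agree on the vertex set by the analysis above; on each band they correspond to functions $\sigma^{(0)},\sigma^{(1)}$ with the same boundary values, and the convex combination $\sigma^{(t)}=(1-t)\sigma^{(0)}+t\sigma^{(1)}$ yields a continuous family of sections, hence an isotopy between $s^{(0)}$ and $s^{(1)}$ that stabilizes each vertical line (since $\pi\circ s^{(t)}=\pi\circ s^{(0)}$ throughout). The hard part should be the local analysis at valency-3 vertices showing that $s(v)=p$; once that is settled, the trivialization of bands and linear interpolation are routine, and the genericity assumption is essential to rule out more complicated configurations of critical points on a single vertical line.
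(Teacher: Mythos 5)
Your proof is correct and follows essentially the same route as the paper: fix the section on vertices at the unique topological critical point over each vertex, extend over each edge using the trivialization of the corresponding band, and obtain uniqueness up to vertical isotopy by fiberwise interpolation. Your explicit argument that $s(v)$ is \emph{forced} to be the interior critical point at a valency-$3$ vertex (via the intersection of the three one-sided limit segments) is a welcome elaboration of what the paper only asserts when it says genericity ``determines the section unambiguously on the vertex set.''
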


\begin{proof}
    The genericity assumption means that above each vertex of $\tilde{\mathcal{D}}$ there 
    is a unique topological critical point of $\mathcal{C}$. This determines 
    the section of $(\rho_{\mathcal{D}})_{|_{\mathcal{D}}}$ unambiguously 
    on the vertex set of $\tilde{\mathcal{D}}$. The preimage of an edge $E$ of $\tilde{\mathcal{D}}$ 
    is a band (see Definition \ref{def:verticesEdgesPR}), which is a trivializable fibration with  
    compact segments as fibers over the interior of $E$. Therefore, one may extend 
    continuously the section from its boundary to the interior of $E$ in a canonical way 
    up to isotopies stabilizing each vertical line (see Figure \ref{fig1008}).
\end{proof}

\begin{figure}[H]
	\begin{center}
		\small
		\tikzstyle{every picture}=[scale=1.0*1]
		\input{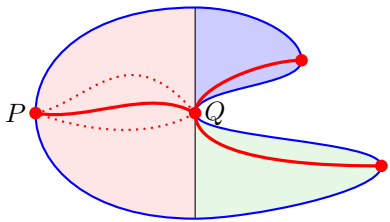}
	\end{center}		
	\caption{Decomposition in bands and choices of paths.}
	\label{fig1008}
\end{figure}

Note that without the genericity assumption,  the conclusion of Proposition \ref{prop:existsect} 
is not necessarily true, as may be checked on Figure \ref{fig:twoInteriorPts}.

\begin{definition}   
\label{def:PRgraphsource}
Let $\mathcal{D}$ be a domain of finite type in a vertical plane $(\mathcal{P}, \pi)$ 
with generic Poincar\'e--Reeb graph $\tilde{\mathcal{D}}$. Then any section  of 
$(\rho_{\mathcal{D}})_{|_{\mathcal{D}}} : \mathcal{D} \to \tilde{\mathcal{D}}$ is called 
a \defi{Poincar\'e--Reeb graph in the source} of $\mathcal{D}$. By contrast, the graph 
$\tilde{\mathcal{D}}$ is called the \defi{Poincar\'e--Reeb graph in the target}. 
\end{definition}

Using the notion of vertical equivalence defined in Subsection \ref{subs:equiv} below, 
one may show that any Poincar\'e--Reeb graph $\doubletilde{\mathcal{D}}$ 
in the source and the Poincar\'e--Reeb graph $\tilde{\mathcal{D}}$ in the target 
are vertically isomorphic: $\tilde{\mathcal{D}} \approx_v \doubletilde{\mathcal{D}}.$  
As explained above, an advantage of the latter construction is that the Poincar\'e--Reeb 
graph in the source lives inside the same plane as the generic finite type domain  $\mathcal{D}.$

Another advantage is that one may define Poincar\'e--Reeb graphs in the source even 
for algebraic domains which are not of finite type, but for which the affine projection $\pi$ 
is assumed to be generic in the sense of Definition \ref{def:genericDir}. 
In those cases the $\mathcal{D}$-collapse of the starting 
affine plane $\mathcal{P}$ is not any more homeomorphic to $\Rr^2$.

\subsection{Vertical equivalence}
\label{subs:equiv}        

The following definition of {\em vertical equivalence} 
is intended to capture the underlying combinatorial structure 
of subsets of vertical planes. That is, we consider that two vertically equivalent 
such subsets have the same combinatorial type.

\begin{definition}   
\label{def:CurveEquiv1}
Let $X$ and $X'$ be subsets of the vertical planes 
$(\mathcal{P},\pi)$ and $(\mathcal{P}',\pi')$ respectively. 
We say that $X$ and $X'$ are \defi{vertically equivalent}, 
denoted by $X\approx_v X'$, if there exist orientation preserving 
homeomorphisms $\Phi : \mathcal{P} \to \mathcal{P}'$ and $\psi : \Rr \to \Rr$ 
such that $\Phi(X) = X'$  and the following diagram is commutative: 
\begin{center}
   \begin{tikzcd}
         \mathcal{P} \arrow{d}[swap]{\pi} \arrow{r}{\Phi}    & \mathcal{P}' \arrow{d}{\pi'} \\
         \Rr \arrow{r}[swap]{\psi}   & \Rr
   \end{tikzcd}
\end{center}
\end{definition}

In the sequel we will apply the previous definition to situations when $X$ and $X'$ are either 
domains of finite type  in the sense of Definition \ref{def:domainFiniteType}  or transversal graphs 
in the sense of Definition \ref{def:transversalGraph}. 

\begin{proposition}   
\label{th:equiv}
Let $\mathcal{D}$ and $\mathcal{D}'$ be compact connected 
domains of finite type in vertical planes, with Poincar\'e--Reeb graphs $G$ and $G'$. 
Assume that both are generic in the sense of Definition \ref{def:gendom}. Then:
$$\mathcal{D} \approx_v \mathcal{D}' \iff G \approx_v G'.$$
\end{proposition}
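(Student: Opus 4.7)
The direction $\Rightarrow$ is essentially functorial and I would dispatch it first. Suppose $(\Phi, \psi)$ is a vertical equivalence with $\Phi(\mathcal{D}) = \mathcal{D}'$. Then $\Phi$ sends each vertical line homeomorphically onto a vertical line, hence sends each connected component of a fiber of $\pi_{|\mathcal{D}}$ to a connected component of a fiber of $\pi'_{|\mathcal{D}'}$. By Definition \ref{def:equivRel}, $\Phi$ therefore descends through the collapsing maps $\rho_\mathcal{D}$ and $\rho_{\mathcal{D}'}$ to a homeomorphism $\tilde\Phi \colon \tilde{\mathcal{P}} \to \tilde{\mathcal{P}}'$ intertwining $\tilde\pi$ and $\tilde\pi'$ via the same $\psi$. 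Since $\Phi$ preserves the topological critical set and the band decomposition, $\tilde\Phi$ sends vertices to vertices and edges to edges, witnessing $G \approx_v G'$.

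For the harder direction $\Leftarrow$, the plan is to invert the collapsing procedure in two steps. Given a vertical graph isomorphism $(\tilde\Phi_0, \psi) \colon G \to G'$, the first step is to extend $\tilde\Phi_0$ to a vertical homeomorphism $\tilde\Phi \colon \tilde{\mathcal{P}} \to \tilde{\mathcal{P}}'$ of the ambient collapsed planes. Both are vertical planes homeomorphic to $\Rr^2$ with the $x$-projection, and $G, G'$ are finite generic transversal graphs; on each vertical fiber one can build a piecewise monotone orientation-preserving homeomorphism matching the finitely many points of $G$ with those of $G'$, varying continuously via $\psi$ on the base, and this produces the required extension. The second step is to lift $\tilde\Phi$ through the collapsing maps to a vertical equivalence $\Phi \colon \mathcal{P} \to \mathcal{P}'$ with $\Phi(\mathcal{D}) = \mathcal{D}'$.

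For this lift, Proposition \ref{prop:quotientIsAPlane} tells us that $\rho_\mathcal{D}$ is a homeomorphism on $\mathcal{P} \setminus \mathcal{D}$, so $\Phi$ is forced there. To define $\Phi$ on $\mathcal{D}$, I would invoke Proposition \ref{prop:existsect} and the genericity of $\mathcal{D}$ to pick Poincaré--Reeb graphs in the source $G^{src} \subset \mathcal{D}$ and $G'^{src} \subset \mathcal{D}'$. The isomorphism $\tilde\Phi$ induces a bijection between the critical segments of $\mathcal{D}$ and those of $\mathcal{D}'$ and between the bands, respecting types: exterior critical points correspond to exterior critical points, and interior critical segments to interior critical segments. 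By genericity each critical segment contains at most one topological critical point, so $\Phi$ can be defined on each critical segment as the unique vertical-line-preserving homeomorphism matching the two distinguished topological critical points, and then extended to each band via its trivialization as $[a,b] \times [0,1]$.

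The main obstacle is the coherence of all these piecewise definitions: the band-by-band $\Phi$ must match simultaneously along the boundary $\mathcal{C}$ (where it is already prescribed by the definition of $\Phi$ on $\mathcal{P} \setminus \mathcal{D}$) and along the critical segments separating adjacent bands. Since each band is a trivial bundle of compact intervals over its edge, any vertical-fiber-preserving homeomorphism prescribed on its topological boundary extends to the interior; so the strategy is first to fix $\Phi$ coherently on $\mathcal{C}$ and on all critical segments, and only then to solve the band-by-band extension problem independently. Compactness of $\mathcal{D}$ ensures that finitely many bands and critical segments are involved, reducing the gluing to a finite combinatorial check that orientations and fibration directions agree, which follows from the orientation-preserving character of $\tilde\Phi$.
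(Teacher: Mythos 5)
Your argument is correct in outline and the forward implication is the same as the paper's (descend $\Phi$ through the collapsing maps). For the converse you take a genuinely different route. The paper does not try to lift $\tilde\Phi$ back through $\rho_{\mathcal{D}}$ and $\rho_{\mathcal{D}'}$; instead it shows that $G$ alone determines $\mathcal{D}$ up to vertical equivalence, by building a thickening $\overline{\mathcal{D}}$ of $G$ by vertical segments (a standard local model at each exterior and interior vertex, glued along the edges) and observing that $\overline{\mathcal{D}}\approx_v\mathcal{D}$; a vertical equivalence $G\approx_v G'$ then transports one thickening to the other, giving $\mathcal{D}\approx_v\overline{\mathcal{D}}\approx_v\overline{\mathcal{D}}'\approx_v\mathcal{D}'$. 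Your route constructs $\Phi$ directly, forced by conjugation on $\mathcal{P}\setminus\mathcal{D}$ via Proposition \ref{prop:quotientIsAPlane} and then band by band on $\mathcal{D}$. What the paper's route buys is that it sidesteps the coherence problem you rightly flag as the main obstacle --- matching the outside definition with the inside one along $\mathcal{C}$ and across critical segments --- since all gluing is packaged once into the single equivalence $\overline{\mathcal{D}}\approx_v\mathcal{D}$. What your route buys is an explicit homeomorphism of the original planes with no auxiliary model. Two small remarks: your first step (extending the graph isomorphism to the ambient collapsed planes) is already supplied by Definition \ref{def:CurveEquiv1}, since $G\approx_v G'$ is by definition witnessed by a homeomorphism $\tilde{\mathcal{P}}\to\tilde{\mathcal{P}}'$; and the continuous extension of the ``forced'' outside map to $\mathcal{C}$ (distinguishing upper and lower endpoints of maximal vertical segments) deserves a sentence, though it does work. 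Both arguments leave the same technical kernel --- reconstructing a domain fiberwise from its graph --- at sketch level, so your proposal is at a comparable level of rigor to the published proof.
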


Before giving the proof of Proposition \ref{th:equiv}, let us make some remarks:
\begin{itemize}
  \item Denote $\mathcal{C} = \partial \mathcal{D}$ and $\mathcal{C}' = \partial \mathcal{D}'$.
  We have $\Phi(\mathcal{C}) = \mathcal{C}'$.

  \item $\Phi$ sends the topological critical points $\{P_i\}$ of $\mathcal{C}$ 
  bijectively to the topological  critical points $\{P_i'\}$ of $\mathcal{C}'$.
  In fact, such a critical point may be geometrically characterized by the local behavior 
  of $\mathcal{D}$ relative to the vertical line through this point. A point $P$ is a 
  topological critical point of $\pi_{|\mathcal{C}}$, if and only if the intersection of $\mathcal{D}$ 
  with the vertical line $\ell$ through $P$ is a point in a neighborhood of $P$, 
  or a segment such that $P$ is in the interior of the segment.
  The homeomorphism $\Phi$ sends the vertical line $\ell$ to a vertical line $\ell'$ 
  and $\mathcal{D}$ to $\mathcal{D}'$, hence $P'=\Phi(P)$ is a topological critical point of 
  $\pi_{|\mathcal{C}'}$.

  \item The equivalence preserves the $\pi$-order of the critical points: 
  if $\mathcal{D} \approx_v \mathcal{D}'$, and if $P_i$, $P_j$ are critical points of 
  $\pi_{|\mathcal{C}}$ with $\pi(P_i) < \pi(P_j)$ then the corresponding critical points 
  of $\pi_{|\mathcal{C}'}$, $P'_i :=\Phi(P_i)$, $P'_j :=\Phi(P_j)$ verify $\pi'(P_i') < \pi'(P_j')$. 
  This comes from the assumption that the homeomorphisms $\Phi$ and $\psi$ 
  involved in Definition \ref{def:CurveEquiv1} are orientation preserving. 
\end{itemize}

\begin{example}
     Consider the canonical affine vertical plane $(\Rr^2, x)$ in the sense of 
     Definition \ref{def:vertplanes}. Then the vertical  equivalence 
     preserves the $x$-order, that is to say, if $x(P_i) < x(P_j)$ then $x(P_i') < x(P_j')$. 
     Notice that the $y$-order of the critical points may not be preserved. 
     However $\Phi$ preserves the orientation on each vertical line, 
     i.e.{} $y \mapsto \Phi(x_0,y)$ is a strictly increasing function.
\end{example}

\begin{example}   \label{ex:permutation}
     Consider again the canonical affine vertical plane $(\Rr^2, x)$ and 
     a generic algebraic domain $\mathcal{D}$ in it, homeomorphic to a disc. 
     Denote $\mathcal{C}=\partial\mathcal{D}$. It is homeomorphic to a circle. 
     Then the set of critical points of $\pi |_{\mathcal{C}}$ 
     (which are the same as the topological critical points, 
     by the genericity assumption) yields a permutation.
     To explain that, we will define two total orders on the set of critical points. 
     The first order enumerates $\{P_i\}$ in a circular manner following $\mathcal{C}$, 
     obtained by following the curve, starting with the point with the smallest $x$ coordinate,  
     the curve being oriented as the boundary of $\mathcal{D}$. 
     The second order is obtained by ordering the abscissas $x(P_i)$ using the standard 
     order relation on $\Rr$. Now, as explained by 
     Knuth (see \cite[page 17]{Gh1}, 
     \cite[Definition 4.21]{sorea2019permutations}, \cite[Section 1]{sorea2019constructing}), 
     two total order relations on a finite set give rise to a permutation  
     $\sigma$: in our case, $\sigma(i)$ is the rank of $x(P_i)$ 
     in the ordered list of all abscissa.
     The vertical equivalence preserves the permutation:
     if $\mathcal{D} \approx_v \mathcal{D}'$ then $\sigma=\sigma'$. However, the reverse implication 
     could be false, as shown in the picture below, which shows two generic 
     real algebraic domains homeomorphic to discs with the same permutation 
       $\left(\begin{smallmatrix}
           1 & 2 & 3 & 4 & 5 & 6 \\
           1 & 5 & 3 & 6 & 2 & 4
         \end{smallmatrix}\right)$, but which are not vertically equivalent, as may be seen by 
             considering their Poincar\'e-Reeb trees.

\begin{figure}[H]
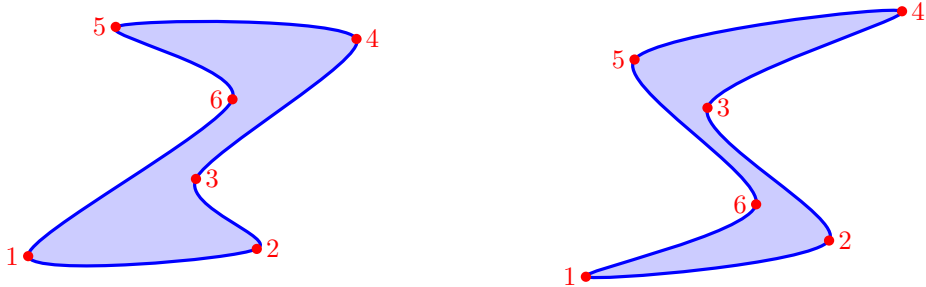

	\begin{minipage}{0.35\textwidth}
	\begin{center}
		\small
		\tikzstyle{every picture}=[scale=1.0*0.8]
		\input{newfigures/fig-reeb-210.tikz}
	\end{center}	
	\end{minipage}
	\qquad	\qquad
	\begin{minipage}{0.35\textwidth}
	\begin{center}
		\small
		\tikzstyle{every picture}=[scale=1.0*0.8]
		\input{newfigures/fig-reeb-211.tikz}
	\end{center}		
	\end{minipage}
	\caption{Two non-vertically equivalent real algebraic domains with the same permutation.} 
    \label{fig001}
 \end{figure}
\end{example}

Example \ref{ex:permutation} shows that the permutations are not complete invariants of 
generic domains of finite type homeomorphic to disks, under vertical equivalence. 
However, by Proposition \ref{th:equiv}, the Poincar\'e--Reeb graphs is a complete invariant for the vertical equivalence.

\medskip

\begin{proof}[Proof of Proposition \ref{th:equiv}]
~
\begin{itemize}
  \item  $\Rightarrow$. Suppose $\mathcal{D}\approx_v \mathcal{D}'$ and let 
     $\Phi : \mathcal{P}\to \mathcal{P}'$ be a homeomorphism realizing this equivalence 
     through a commutative diagram 
          \begin{center}
       \begin{tikzcd}
             \mathcal{P} \arrow{d}[swap]{\pi} \arrow{r}{\Phi}    & \mathcal{P}' \arrow{d}{\pi'} \\
             \Rr \arrow{r}[swap]{\psi}   & \Rr
       \end{tikzcd}
   \end{center}
     By definition, $\Phi$ preserves the vertical foliations, hence is compatible with the 
     vertical equivalence relations $\sim_{\mathcal{D}}$ and $\sim_{\mathcal{D}'}$ 
     of Definition \ref{def:equivRel}. Therefore it induces a homeomorphism 
     $\tilde{\Phi} : \tilde{\mathcal{P}} \to \tilde{\mathcal{P}}'$ from the 
     $\mathcal{D}$-collapse of $\mathcal{P}$ to the $\mathcal{D}'$-collapse of $\mathcal{P}'$, 
     sending 
     $G = \mathcal{D}/{\sim}$ to $G'=\mathcal{D}'/{\sim}$. This homeomorphism  
     gets naturally included in a commutative diagram
         \begin{center}
       \begin{tikzcd}
            \tilde{\mathcal{P}} \arrow{d}[swap]{\tilde{\pi}} \arrow{r}{\tilde{\Phi}}    
                    & \tilde{\mathcal{P}}' \arrow{d}{\tilde{\pi}'} \\
             \Rr \arrow{r}[swap]{\psi}   & \Rr
       \end{tikzcd}
   \end{center}
 Therefore, by Definition \ref{def:CurveEquiv1},   $G \approx_v G'$.

  \item $\Leftarrow$. The keypoint is to reconstruct the topology of a generic 
      domain of finite type $\mathcal{D}$ homeomorphic to a disk (and of its boundary $\mathcal{C}$) 
      from its Poincar\'e--Reeb graph $G$. To this end, one may construct a kind of tubular 
      neighborhood $\overline{\mathcal{D}}$ of $G$, obtained by thickening it using 
      vertical segments (see Figure \ref{fig1009}). 
     Then $\overline{\mathcal{D}}$  is vertically equivalent to $\mathcal{D}$.

\begin{figure}[H]
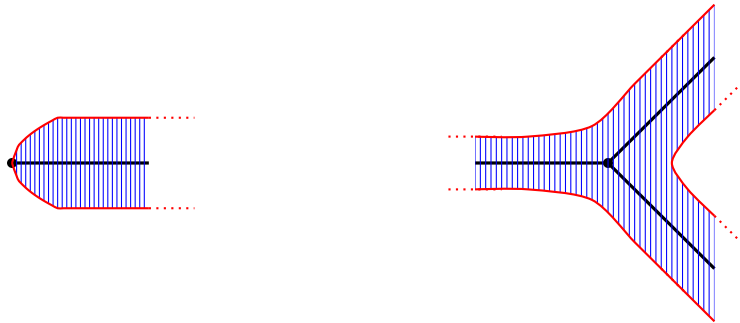

	\begin{minipage}{0.35\textwidth}
	\begin{center}
		\small
		\tikzstyle{every picture}=[scale=1.0*0.6]
		\input{newfigures/fig-reeb-212.tikz}
	\end{center}	
	\end{minipage}
	\qquad	
	\begin{minipage}{0.35\textwidth}
	\begin{center}
		\small
		\tikzstyle{every picture}=[scale=1.0*0.7]
		\input{newfigures/fig-reeb-213.tikz}
	\end{center}		
	\end{minipage}	

	\caption{Thickening in the neighborhood of 
	      an exterior vertex (left) and of an interior vertex (right).}
	\label{fig1009}
\end{figure}

Now suppose that $G \approx_v G'$ and let 
$\tilde{\Phi} : \tilde{\mathcal{P}} \to \tilde{\mathcal{P}}'$ 
be a homeomorphism inducing this equivalence. This homeomorphism 
induces also a vertical equivalence of convenient such thickenings, 
hence yields the equivalence $\mathcal{D} \approx_v \mathcal{D}'$.
\end{itemize}
\end{proof}

The combinatorial types of \emph{generic} transversal graphs can be realized by special types of graphs with smooth edges in the canonical affine vertical plane $(\Rr^2, x:\Rr^2\rightarrow\Rr)$:

\begin{proposition} 
\label{prop:smoothrealgtg}
Any generic transversal graph in a vertical plane is vertically equivalent to a graph in the canonical affine vertical plane, whose edges are moreover smooth and 
smoothly transversal to the vertical lines.
\end{proposition}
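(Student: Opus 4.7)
The plan is to first reduce to the canonical affine vertical plane $(\Rr^2, x)$, then construct a smooth generic transversal graph $G'$ with exactly the same combinatorial structure as $G$, and finally build an orientation-preserving, fibre-preserving homeomorphism of $\Rr^2$ carrying $G$ to $G'$.

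First, since every vertical plane is globally trivialisable (as noted right after Definition~\ref{def:vertplanes}), I would fix an orientation-preserving homeomorphism $\Phi_0 : \mathcal{P} \to \Rr^2$ with $x \circ \Phi_0 = \pi$. This gives $(\mathcal{P},\pi) \approx_v (\Rr^2, x)$ in the sense of Definition~\ref{def:CurveEquiv1} (with $\psi = \mathrm{id}$), so it suffices to smooth $\Phi_0(G)$, a generic transversal graph inside the canonical affine vertical plane. I will still call it $G$.

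Second, I would construct $G'$ as follows. Keep the vertex set of $G$ unchanged, with the same planar positions. Each edge $E$ of $G$ is, by Definition~\ref{def:transversalGraph}(\ref{condtransv}), the graph of a continuous function $f_E : I_E \to \Rr$ on a closed (possibly unbounded) interval $I_E \subseteq \Rr$ whose finite endpoints are the $x$-coordinates of the bordering vertices. Replace each $f_E$ by a smooth function $g_E : I_E \to \Rr$ agreeing with $f_E$ at the vertex endpoints of $I_E$: for bounded $I_E$, a smooth Hermite-type interpolant; for unbounded tails, a smooth extension by a constant. The resulting graph $G'$ has edges given by smooth functions over intervals, so their tangents are never vertical, hence are smoothly transversal to the vertical lines. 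Genericity is preserved since the vertex set, the vertex valencies and the property of lying on distinct vertical lines are inherited from $G$, and properness of $x|_{G'}$ follows from properness of each $g_E$ on $I_E$.

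Third, I would produce an orientation-preserving homeomorphism $\Phi : \Rr^2 \to \Rr^2$ with $x \circ \Phi = x$ and $\Phi(G) = G'$ by defining $\Phi$ fibrewise. For each $x_0 \in \Rr$, the finite sets $S(x_0) := (\{x_0\}\times\Rr) \cap G$ and $S'(x_0) := (\{x_0\}\times\Rr) \cap G'$ have the same cardinality (the same number of edges of $G$ and $G'$ lie above $x_0$, plus the possible common vertex), and the vertical order supplies an order-preserving bijection $S(x_0) \leftrightarrow S'(x_0)$ that fixes vertex points. I would take $\Phi(x_0, \cdot)$ to be the unique orientation-preserving piecewise-affine homeomorphism of $\Rr$ that realises this bijection and equals the identity outside a large bounded segment containing both $S(x_0)$ and $S'(x_0)$.

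The main obstacle is continuity of $\Phi$ across the finitely many vertical lines through vertices, where the cardinalities of $S(x_0)$ and $S'(x_0)$ change together. This is handled by the choice made in Step~2: at each such vertex $v$, the number of edges of $G$ incident locally from the left and from the right matches the corresponding count for $G'$ (the two graphs share vertex positions and local incidence patterns), and the values $f_E$ and $g_E$ agree at vertex endpoints. Consequently the piecewise-affine fibre maps defined on either side of the vertex line coincide on the line itself, so $\Phi$ is continuous there; elsewhere continuity follows from continuity of the $f_E$ and $g_E$. Thus $\Phi$ is a continuous bijection of $\Rr^2$ preserving the vertical foliation and orientation, hence a homeomorphism by invariance of domain, yielding $G \approx_v G'$ as required.
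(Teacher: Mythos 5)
The overall architecture of your argument is reasonable (and the paper itself only says ``we leave the proof to the reader'', so there is no official proof to compare against): trivialise the fibration to reduce to $(\Rr^2,x)$, build a smooth model $G'$ over the same vertex set, and then produce a fibrewise piecewise-affine homeomorphism. However, Step 2 has a genuine gap. Replacing each $f_E$ \emph{independently} by a smooth interpolant that agrees with $f_E$ only at the endpoints of $I_E$ does not guarantee that the resulting arcs are pairwise disjoint, so $G'$ need not be an embedded graph at all, and the fibre cardinalities $|S'(x_0)|=|S(x_0)|$ on which Step 3 rests can fail. Concretely: a generic transversal graph may contain two distinct edges joining the same pair of valency-$3$ vertices (a $2$-cycle); both edges are graphs of functions on the same interval with the same endpoint values, so any ``Hermite-type interpolant determined by the endpoint data'' sends them to the \emph{same} arc, collapsing the two edges onto each other. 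More generally, two edges emanating from a common vertex, or two edges over overlapping intervals, can cross after being smoothed separately, and at a crossing abscissa $x_0$ the order-preserving bijection $S(x_0)\leftrightarrow S'(x_0)$ no longer exists.

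The repair is to smooth all edges \emph{simultaneously}, preserving the strict vertical order of the strands over every $x_0$. For instance, work band by band between consecutive vertical lines through vertices: over each such band $G$ is a finite disjoint union of graphs of continuous functions, strictly ordered, and these can be replaced by smooth functions keeping the same strict order (compactness gives a uniform positive separation away from the vertex lines, and near each vertex one installs an explicit local smooth model of the valency-$1$ or valency-$3$ picture and interpolates). Equivalently, one can first write down a smooth normal form $G'$ with the prescribed combinatorics and then build the fibre-preserving homeomorphism carrying $G$ to it. With that modification your Step 3 goes through; note also there that the ``large bounded segment'' outside which $\Phi(x_0,\cdot)$ is the identity must be chosen to depend continuously on $x_0$, since $S(x_0)$ need not be uniformly bounded when $G$ has unbounded edges.
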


We leave the proof of this proposition to the reader.

\begin{remark}
We said at the beginning of this subsection that we introduced vertical equivalence as a way to capture the combinatorial aspects of subsets of vertical planes. It is easy to construct a combinatorial object (that is, a structure on a finite set) which encodes the combinatorial type of a generic transversal graph. For instance, given such a graph $G$, one may number its vertices from $1$ to $n$ in the order of the values of the vertical projection $\pi$. Then, for each edge $\alpha$ of $G$, one may remember both its end points $a < b$ and, for each number  $c \in \{a+1, \dots, b-1\}$, whether $\alpha$ passes below or above the vertex numbered $c$.    
\end{remark}

\section{Algebraic realization in the compact connected case}
\label{sec:algebraic}

In this section we give the main result of the paper, Theorem \ref{th:realizationA}: given a compact connected generic transversal graph $G$ in a vertical plane 
(see Definition \ref{def:transversalGraph}), we prove that there exists a compact 
algebraic domain in the canonical affine vertical plane whose Poincar\'e--Reeb graph is 
vertically equivalent to $G$.
We will prove a variant of Theorem \ref{th:realizationA} for non-compact graphs 
in the next section (Theorem  \ref{th:realizationB}).

Using the canonical orientation of the target $\mathbb{R}$ of the vertical projection, one may 
distinguish two kinds of interior and exterior vertices of the graph $G$ (see Figure \ref{fig1011}).

\begin{figure}[H]
	\begin{center}
		\small
		\tikzstyle{every picture}=[scale=1.0*1]
		\input{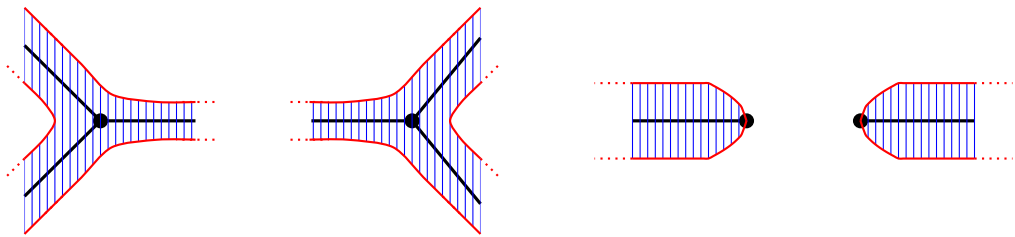}
	\end{center}	

	\caption{The two kinds of interior vertices (on the left) and of exterior vertices (on the right).}
	\label{fig1011}
\end{figure}

Our strategy of proof of Theorem \ref{th:realizationA}  is as follows:
\begin{itemize}
  \item we realize the generic transversal graph $G$ as a Poincar\'e--Reeb graph of a 
      finite type domain defined by a smooth function;
  \item we present a Weierstrass-type theorem that approximates 
      any smooth function by a polynomial function;
  \item we adapt this Weierstrass-type theorem in order to control vertical tangents, 
       and we realize $G$ as the Poincar\'e--Reeb graph of a generic finite type algebraic domain.
\end{itemize}

\subsection{Smooth realization}

First, we construct a smooth function $f$ that realizes a given generic transversal graph.

\begin{proposition}
\label{prop:smooth-real}
   Let $G$ be a compact connected generic transversal graph.
   There exists a  $C^{\infty}$  
   function $f : \Rr^2 \to \Rr$ such that the curve $\mathcal{C} = (f=0)$ 
   does not contain critical points of $f$ and is the boundary 
   of a domain of finite type whose Poincar\'e--Reeb graph in 
   the canonical vertical plane $(\Rr^2, x)$ is vertically equivalent to $G$. 
\end{proposition}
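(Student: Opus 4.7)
The plan is to first rigidify $G$ so that its edges are smooth arcs smoothly transverse to the vertical lines, then to build a compact topological disk-like domain $\mathcal{D}$ by vertical thickening of $G$ with tailored smooth local models at each vertex, and finally to obtain $f$ as a smooth defining function for $\partial \mathcal{D}$ with nowhere vanishing differential along the boundary. The identification $\tilde{\mathcal{D}} \approx_v G$ then follows essentially tautologically from the construction together with Proposition \ref{th:equiv}.

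Concretely, my first step is to replace $G$ by a vertically equivalent transversal graph in $(\Rr^2, x)$ whose edges are smooth and smoothly transverse to every vertical line, using Proposition \ref{prop:smoothrealgtg}. Label its vertices $v_1,\dots,v_n$, ordered by strictly increasing $x$-coordinate (possible by genericity). On each open edge $e$, locally a smooth graph $y=g_e(x)$ over some open interval $I_e$, I would choose a strictly positive smooth width function $w_e:I_e\to(0,+\infty)$ and form the vertical band $B_e=\{(x,y):\,g_e(x)-w_e(x)\le y\le g_e(x)+w_e(x)\}$. Near a valency-$1$ vertex $v=(x_0,y_0)$, at which the unique incident edge lies (say) to the right of $x_0$, I would cap the band by a smooth parabolic arc tangent to the vertical line $x=x_0$ at $v$, modeled locally on $(y-y_0)^2=c(x-x_0)$ with $c>0$. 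Near a valency-$3$ vertex, where one edge lies on one horizontal side and two on the other, I would use a smooth local model in which two incoming bands merge into one through a single pinching arc, modeled locally on $(y-y_0)^2=c(x_0-x)$ (or the mirrored version) for the inner boundary, together with smooth upper and lower outer envelopes. Gluing these local models to the edge bands by adjusting the width functions $w_e$ with a smooth partition of unity, I obtain a compact closed subset $\mathcal{D}\subset\Rr^2$ whose boundary $\mathcal{C}$ is a compact smooth embedded $1$-submanifold, whose topological critical set with respect to $x$ is exactly $\{v_1,\dots,v_n\}$, and whose interior/exterior types at those points match those of the vertices of $G$.

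To produce $f$, I appeal to the tubular neighborhood theorem for the smooth embedded curve $\mathcal{C}\subset\Rr^2$: there is an open neighborhood $U$ of $\mathcal{C}$ and a smooth diffeomorphism $U\cong \mathcal{C}\times(-\epsilon,\epsilon)$ identifying $\mathcal{C}$ with $\mathcal{C}\times\{0\}$. The projection $(p,t)\mapsto t$, signed so as to be negative on the interior of $\mathcal{D}\cap U$, gives a smooth function $f_0$ on $U$ with $(f_0=0)=\mathcal{C}\cap U$ and $df_0\ne 0$ on $\mathcal{C}$. Using a smooth cutoff supported in $U$ and equal to $1$ near $\mathcal{C}$, I extend $f_0$ to a global $C^\infty$ function $f:\Rr^2\to\Rr$ which still vanishes exactly on $\mathcal{C}$, is negative on the interior of $\mathcal{D}$ and positive outside $\mathcal{D}$, and has non-vanishing differential at every point of $\mathcal{C}$. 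By the construction of $\mathcal{D}$, the thickening retracts onto $G$ via vertical collapse, so $\rho_{\mathcal{D}}(G)=\tilde{\mathcal{D}}$ and the two are vertically isomorphic; equivalently, the Poincar\'e--Reeb graph $\tilde{\mathcal{D}}$ has the same ordered, labeled vertex set and same edge combinatorics as $G$, so $\tilde{\mathcal{D}}\approx_v G$ by Proposition \ref{th:equiv} (applied to $\mathcal{D}$ and the thickening of a realization that shares $G$ as Poincar\'e--Reeb graph).

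The main technical obstacle is the local-to-global smooth assembly: one must choose the width functions $w_e$ and the local vertex caps so that they patch smoothly while keeping $\mathcal{C}$ embedded, preventing spurious topological critical points of $x_{|\mathcal{C}}$, and producing precisely the prescribed interior/exterior type and kind at every vertex. Once this smooth interpolation is carried out carefully, the remaining steps — defining $f$ from the tubular neighborhood and identifying $\tilde{\mathcal{D}}$ with $G$ up to vertical equivalence — are essentially formal consequences of the construction.
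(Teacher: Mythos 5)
Your proposal follows essentially the same route as the paper: smooth the graph via Proposition \ref{prop:smoothrealgtg}, realize $\mathcal{C}$ as the boundary of a thickening of $G$ contained in a small neighborhood of $G$ with exactly one vertical tangency per vertex (ordered as the vertices are), and obtain $f$ by gluing a collar coordinate of $\mathcal{C}$ with a locally constant sign function via a partition of unity. One wording caveat: a cutoff $\chi$ with $f=\chi f_0$ would make $f$ vanish on all of $\Rr^2\setminus U$, contradicting your claim that $f$ vanishes exactly on $\mathcal{C}$, so (as you implicitly intend, and as the paper does with its $\pm 1$ bicoloring $\sigma$) you must glue $f_0$ with the sign function on the complement rather than merely truncate it.
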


\begin{figure}[H]
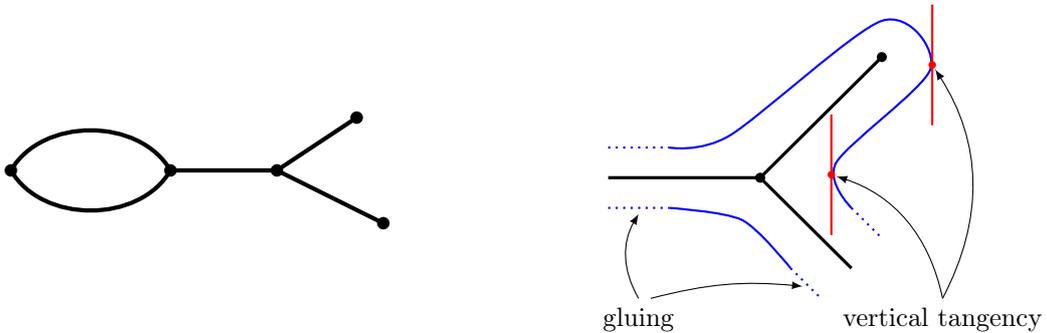

	\begin{minipage}{0.35\textwidth}
	\begin{center}
		\small
		\tikzstyle{every picture}=[scale=1.0*0.7]
		\input{newfigures/fig-reeb-302.tikz}
	\end{center}	
	\end{minipage}
	\qquad	\qquad \qquad
	\begin{minipage}{0.35\textwidth}
	\begin{center}
		\small
		\tikzstyle{every picture}=[scale=1.0*0.8]
		\input{newfigures/fig-reeb-303.tikz}
	\end{center}		
	\end{minipage}	

	\caption{A generic compact transversal graph (left) and a local smooth realization (right).}
	\label{fig302-303}
\end{figure}

\begin{proof} 
	
The idea is to construct first the curve $\mathcal{C}$, then the function $f$.
We construct $\mathcal{C}$ by interpolating between local constructions in the neighborhoods of 
the vertices of $G$ (see Figure \ref{fig302-303}). Let us be more explicit.
We may assume, up to performing a vertical equivalence, that $G$ is a 
	graph with smooth compact edges in the canonical affine vertical plane $(\mathbb{R}^2, x)$, 
	whose edges are moreover smoothly transversal to the verticals (see 
	Proposition \ref{prop:smoothrealgtg}).  
	Let $\epsilon > 0$ be fixed.
Then, one may construct $\mathcal{C}$ verifying the following properties:
\begin{itemize}
	\item $\mathcal{C}$ is compact;
	\item $\mathcal{C} \subset N(G,\epsilon)$: the curve is contained in  
	the $\epsilon$-neighborhood of $G$;
	\item $\mathcal{C}$ has only one vertical tangent associated to each vertex of $G$;
	\item all these tangents are ordered in the same way as the vertices of $G$.
\end{itemize}
Note that this last condition is automatic once $\epsilon$ is chosen less than half 
	the minimal absolute value $| x(P_i) - x(P_j)|$, where $P_i$ and $P_j$ are distinct 
	vertices of $G$.
	
Once $\mathcal{C}$ is fixed,  one may construct $f$ by following the steps:
\begin{itemize}
	\item  Bicolor the complement $\mathbb{R}^2   \setminus  \mathcal{C}$ 
	of $\mathcal{C}$ using the numbers $\pm 1$, such that 
	neighboring connected components have distinct associated numbers. Denote 
	by $\sigma : \mathbb{R}^2  \setminus \mathcal{C} \to \Rr$ the resulting function. 
	\item Choose pairwise distinct annular neighborhoods $N_i$ of the connected 
	components $\mathcal{C}_i$ of $\mathcal{C}$, and diffeomorphisms 
	$ \phi_i : N_i \simeq \mathcal{C}_i \times (-1, 1)$ such that $p_2 \circ \phi_i$			
	(the composition 	of the second projection $p_2 :  \mathcal{C}_i \times (-1, 1) \to (-1, 1)$ 
	and of $\phi_i$)
	has on the complement of $\mathcal{C}_i$ the same sign as $\sigma$.            
	\item For each connected component $S_j$ of $\mathbb{R}^2  \setminus  \mathcal{C}$, 
	consider the open set $U_j \subset S_j$ obtained as the complement of the union 
	of annuli of the form $\phi_i^{-1}(\mathcal{C}_i \times [-1/2, 1/2])$. Then consider the 
	restriction  $\sigma_j : U_j \to \mathbb{R}$ of $\sigma$ to $U_j$.
	\item Fix a smooth partition of unity on $\mathbb{R}$ subordinate to the locally finite open 
	covering consisting of the annuli $N_i$ and the sets $U_j$. Then glue the smooth functions 
	$p_2 \circ \phi_i : N_i \to \mathbb{R}$ and $\sigma_j : U_j \to \mathbb{R}$ using it.
	\item The resulting function $f$ satisfies the desired properties.
\end{itemize}
\end{proof}

\subsection{A Weierstrass-type approximation theorem}

Let us first recall the following classical result:

\begin{theorem}[Stone-Weierstrass theorem, \cite{Stone}]
\label{th:weierstrass}
    Let $X$ be a compact Hausdorff space. Let $C(X)$ be the Banach $\Rr$-algebra of continuous 
    functions from $X$ to $\Rr$ endowed with the norm $\|\cdot\|_\infty$.
    Let $A \subset C(X)$ be such that:
   \begin{itemize}
       \item $A$ is a sub-algebra of $C(X)$,
       \item $A$ separates points (that is, for each $x,y \in X$ with $x\neq y$ 
            there exists $f \in A$ such that $f(x) \neq f(y)$), 
       \item for each $x \in X$, there exists $f \in A$ such that $f(x) \neq 0$.
   \end{itemize}
     Then $A$ is dense in $C(X)$ relative to the norm $\|\cdot\|_\infty$.
\end{theorem}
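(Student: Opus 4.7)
The plan is to show that the closure $\overline{A}$ of $A$ in $(C(X), \|\cdot\|_\infty)$ equals $C(X)$. Since $\overline{A}$ is itself a closed subalgebra satisfying the three hypotheses, I may replace $A$ by $\overline{A}$ and prove directly that any \emph{closed} subalgebra satisfying these hypotheses coincides with $C(X)$. The proof splits into three stages: an algebraic-to-lattice step, a point-separation step that produces functions with prescribed values at two points, and a global approximation step based on compactness of $X$.

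The key lemma is that $A$ is a lattice, i.e.\ closed under pointwise $\max$ and $\min$. For this it suffices to show that $f \in A$ implies $|f| \in A$, since
\[
\max(f,g) = \tfrac{1}{2}\bigl(f+g+|f-g|\bigr), \qquad \min(f,g) = \tfrac{1}{2}\bigl(f+g-|f-g|\bigr).
\]
To approximate $|f|$ by elements of $A$, I use the classical fact that the function $t \mapsto |t|$ is the uniform limit on $[-M,M]$ (for any $M>0$) of a sequence of polynomials without constant term; this is proved either by expanding $\sqrt{1-u}$ via the binomial series or by the elementary recursion $p_{n+1}(t) = p_n(t) + \tfrac{1}{2}(t - p_n(t)^2)$ converging uniformly to $\sqrt{t}$ on $[0,1]$. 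Since $A$ is a subalgebra, applying such a polynomial to $f$ yields an element of $A$, and closure of $A$ gives $|f| \in A$.

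Next, I upgrade the separation hypothesis: for any two distinct points $x,y \in X$ and any two values $a,b \in \mathbb{R}$, there exists $h \in A$ with $h(x)=a$, $h(y)=b$. Using the separation and the nowhere-vanishing hypothesis, I produce functions $u,v \in A$ with $u(x)\neq u(y)$ and a function nowhere zero, and then take an explicit linear combination in the two-dimensional subspace of $A$ spanned by suitable products of $u$ and a non-vanishing function; elementary linear algebra supplies the required $h$.

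Finally, fix $g \in C(X)$ and $\epsilon > 0$. For each ordered pair $(x,y)$ pick $h_{x,y}\in A$ matching $g$ at $x$ and $y$. For fixed $x$, the open sets $\{z \in X : h_{x,y}(z) < g(z)+\epsilon\}$ cover $X$ as $y$ varies; by compactness I extract a finite subcover and set $f_x := \min$ of the corresponding $h_{x,y_i}$, which lies in $A$ by the lattice property and satisfies $f_x(x)=g(x)$ and $f_x < g+\epsilon$ everywhere. Varying $x$, the open sets $\{z : f_x(z) > g(z)-\epsilon\}$ again cover $X$; a further finite subcover and $f := \max$ of the corresponding $f_{x_j}$ gives $\|f-g\|_\infty \leq \epsilon$, so $g \in \overline{A}=A$. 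The main obstacle is the lattice step, as it is the one point where analysis (uniform polynomial approximation of $|t|$) enters an otherwise purely algebraic-topological argument; the rest is a bookkeeping exercise in compactness.
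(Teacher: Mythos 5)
The paper does not prove this statement at all: it is quoted as the classical Stone--Weierstrass theorem with a citation to \cite{Stone}, so there is no in-paper argument to compare against. Your proposal is the standard lattice-theoretic proof and is essentially correct: passing to the closed subalgebra $\overline{A}$, establishing the lattice property via uniform approximation of $t\mapsto|t|$ by polynomials without constant term (the absence of a constant term matters precisely because $A$ is not assumed to contain the constants), two-point interpolation, and the double compactness argument with $\min$ then $\max$. The only place where you are somewhat loose is the two-point interpolation step: ``elementary linear algebra'' should be spelled out, since the image of the evaluation map $f\mapsto(f(x),f(y))$ is a linear subspace of $\Rr^2$ that is also closed under coordinatewise multiplication, and one must check that the separation hypothesis excludes the diagonal and the zero subspace while the nowhere-vanishing hypothesis excludes the two coordinate axes, which are the only other multiplicatively closed proper subspaces; this forces the image to be all of $\Rr^2$. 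With that detail filled in, your argument is complete, and note that for the paper's actual use (Corollary \ref{cor:approxpol}, where $A=\Rr[x,y]$ contains the constants) the simpler unital version would already suffice.
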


We will only use the previous theorem through the following corollary:

\begin{corollary}   
\label{cor:approxpol}
Let $f : \Rr^2 \to \Rr$ be a continuous map and $a,b\in\Rr, a<b$.
For each $\epsilon>0$, there exists a polynomial $p \in \Rr[x,y]$ such that :
$$\forall (x,y) \in [a,b]\times [a,b] \quad \big| f(x,y) - p(x,y) \big| < \epsilon$$
\end{corollary}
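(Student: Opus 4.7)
The plan is to apply the Stone--Weierstrass theorem (Theorem \ref{th:weierstrass}) directly, taking $X := [a,b]\times[a,b]$ and $A \subset C(X)$ to be the subalgebra of restrictions to $X$ of polynomial functions in $\Rr[x,y]$. The square $X$ is compact (closed and bounded in $\Rr^2$) and Hausdorff, so the hypothesis on the ambient space is satisfied, and the restriction $f|_X$ belongs to $C(X)$ since $f$ is continuous on all of $\Rr^2$.

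Next I would verify the three conditions on $A$. It is a subalgebra because sums, products and scalar multiples of polynomials are again polynomials, and the restriction map $\Rr[x,y] \to C(X)$ is an $\Rr$-algebra homomorphism. For the separation of points: given $(x_1,y_1) \neq (x_2,y_2)$ in $X$, at least one coordinate differs, and the monomial $p(x,y)=x$ or $p(x,y)=y$ takes distinct values at the two points. For the non-vanishing condition, the constant polynomial $p(x,y) = 1$ is nowhere zero on $X$.

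With the three hypotheses verified, Stone--Weierstrass gives that $A$ is dense in $C(X)$ for the uniform norm $\|\cdot\|_\infty$. Applied to $f|_X \in C(X)$ and the prescribed $\epsilon > 0$, this yields a polynomial $p \in \Rr[x,y]$ whose restriction satisfies $\|f|_X - p|_X\|_\infty < \epsilon$, which is precisely the desired uniform estimate on $[a,b]\times[a,b]$. There is no real obstacle here, the statement being a textbook corollary of Theorem \ref{th:weierstrass}; the only point worth underlining is that the continuity of $f$ on all of $\Rr^2$ is used only to guarantee that $f|_X \in C(X)$, and that the approximating polynomial $p$ is of course defined on the whole plane, but the estimate is claimed only on the compact square $X$.
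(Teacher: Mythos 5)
Your proof is correct and follows exactly the same route as the paper: both apply Theorem \ref{th:weierstrass} with $X=[a,b]\times[a,b]$ and $A$ the (restrictions of) polynomials, checking the separation condition via the coordinate functions and the non-vanishing condition via the constant $1$. You simply spell out the verification in more detail than the paper does.
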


\begin{proof}
We apply Theorem \ref{th:weierstrass} with $X = [a,b]\times [a,b]$,  $A=\Rr[x,y]$. 
This set $A$ satisfies the three conditions of Theorem \ref{th:weierstrass} (the last one because $1_X \in A$), 
therefore $A$ is dense in $C(X)$, which implies that $f$ can indeed be uniformly 
arbitrarily well approximated on $X$ by polynomials.
\end{proof}

Is Corollary \ref{cor:approxpol} sufficient to answer the realization question? No!
Indeed, even if it provides a polynomial $p(x,y)$ such that $(p(x,y)=0)$ {lies in a close 
neighborhood of $(f(x,y)=0)$, we have no control on the vertical tangents of the algebraic 
curve $(p=0)$, whose Poincar\'e--Reeb graph can therefore be more complicated than 
the Poincar\'e--Reeb graph of $(f=0)$.
In the sequel we construct a polynomial $p$ by keeping at the same time a control on 
the vertical tangents of a suitable level curve of it.

\subsection{Algebraic realization}     

\begin{proposition}
\label{prop:alg-real}
Let $f : \Rr^2 \to \Rr$ be a $C^3$ function such that $\mathcal{C} = (f=0)$ 
is a curve which does not contain critical points of $f$, which has only simple vertical tangents, 
and is included in the interior of a compact subset $K$ of $\mathbb{R}^2$.
For each $\delta>0$, there exists a polynomial $p\in \Rr[x,y]$ such that (see Figure \ref{fig03}):
\begin{itemize}
    \item $(p=0) \cap K \subset N(f=0,\delta)$,
    \item for each point $P_0 \in (f=0)$ where $(f=0)$ has a vertical tangent, there exists 
         a unique $Q_0 \in (p=0)$ in the disc $N(P_0,\delta)$ centered at $P_0$ and of radius 
           $\delta$ such that $(p=0)$ has also a vertical tangency at $Q_0$,
    \item $(p=0) \cap K$ has no vertical tangent except at the former points.
\end{itemize} 
\end{proposition}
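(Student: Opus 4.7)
The plan is to find a polynomial $p$ approximating $f$ in the $C^2$-topology on $K$, then transfer the vertical tangency structure of $(f=0)$ to $(p=0)$ via the implicit function theorem. The first step --- strengthening Corollary~\ref{cor:approxpol} from $C^0$ to $C^2$ control --- constitutes the adaptation of the Weierstrass-type theorem. Since $f$ is $C^3$, a standard Bernstein-polynomial argument (or alternatively: apply the $C^0$ Stone--Weierstrass approximation to $\partial_{xy}f$ and $\partial_{yy}f$, then integrate twice in $y$, correcting the integration constants by one-variable polynomial approximations of the traces of $\partial_x f$ and $f$ along a fixed horizontal line) yields, for any prescribed $\epsilon>0$, a polynomial $p\in\Rr[x,y]$ satisfying
\[
\|\partial^\alpha(f-p)\|_{\infty,K}<\epsilon\qquad\text{for every multi-index }|\alpha|\le 2.
\]

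Granting such a $p$, the first bullet follows immediately: by compactness, $|f|$ has a strictly positive minimum $\mu>0$ on the compact set $K\setminus N((f=0),\delta)$, so for $\epsilon<\mu$ we have $|p|>0$ there, whence $(p=0)\cap K\subset N((f=0),\delta)$. For the second bullet, let $P_1,\ldots,P_N$ be the vertical tangent points of $(f=0)$. At each $P_i$ the hypotheses force $\partial_y f(P_i)=0$, $\partial_x f(P_i)\neq 0$, and $\partial_{yy}f(P_i)\neq 0$, so the map $\Phi_f:=(f,\partial_y f)$ is a local $C^1$-diffeomorphism at $P_i$ with $\Phi_f(P_i)=(0,0)$. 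The perturbed map $\Phi_p:=(p,\partial_y p)$ is $C^1$-close to $\Phi_f$ on $K$, so a quantitative implicit function theorem (Newton iteration or Banach fixed-point) produces, for $\epsilon$ small, a unique zero $Q_i$ of $\Phi_p$ in $N(P_i,\delta)$ with $Q_i\to P_i$ as $\epsilon\to 0$; at $Q_i$ both $p$ and $\partial_y p$ vanish, giving the desired vertical tangent of $(p=0)$ near $P_i$.

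The third bullet is handled by a compactness argument: the continuous function $|f|+|\partial_y f|$ has zero set in $K$ precisely equal to $\{P_1,\ldots,P_N\}$, hence is bounded below by some $\eta>0$ on $K\setminus\bigcup_i N(P_i,\delta/2)$; for $\epsilon<\eta/3$ we get $|p|+|\partial_y p|\ge\eta-2\epsilon>0$ there, ruling out extraneous vertical tangents of $(p=0)$. Combined with the IFT uniqueness inside each $N(P_i,\delta)$, this pins the vertical tangents of $(p=0)\cap K$ down to $\{Q_1,\ldots,Q_N\}$. The main obstacle is the first step: the plain $C^0$ approximation of Corollary~\ref{cor:approxpol} does not control the first-order condition $\partial_y p=0$ defining a vertical tangent, nor can one invoke the IFT to guarantee persistence of such a tangent without second-order information via the Jacobian of $(f,\partial_y f)$; one must in particular ensure that $\partial_x p$ also approximates $\partial_x f$, since $\partial_x f$ enters that Jacobian determinant.
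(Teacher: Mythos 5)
Your argument is correct and shares the paper's overall strategy---approximate $f$ by a polynomial together with enough of its partial derivatives, then transfer each vertical tangency---but both technical ingredients are executed differently. For the approximation step, the paper first replaces $f$ by a compactly supported $C^3$ function $g$, applies Stone--Weierstrass to the single third derivative $\partial_x\partial_y\partial_y g$, and integrates three times; the compact support makes all integration constants vanish, yielding uniform closeness of $p,\ \partial_x p,\ \partial_y p,\ \partial_{y^2}p$ (but \emph{not} $\partial_x\partial_y p$) to the corresponding derivatives of $f$. Your Bernstein-polynomial route gives full $C^2$ closeness directly and is standard; note, however, that your alternative sketch (approximate $\partial_{xy}f$ and $\partial_{yy}f$ separately, then integrate in $y$ and correct the constants) does not obviously produce a \emph{single} polynomial whose $x$-derivatives are also controlled---the two approximants would need a compatibility condition---so you should rely on the Bernstein route or on the paper's compact-support trick. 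For the persistence of vertical tangents, the paper avoids the implicit function theorem entirely: it locates $Q_0$ by intermediate-value sign changes of $p$ and of $\partial_y p$, and gets uniqueness and the type of the tangency from the non-vanishing of $\partial_{y^2}p$ near $P_0$; this is precisely why it never needs the mixed derivative. Your quantitative IFT applied to $\Phi_p=(p,\partial_y p)$ is arguably cleaner, giving existence, uniqueness and convergence $Q_i\to P_i$ in one stroke, at the cost of the extra requirement $\partial_{xy}p\approx\partial_{xy}f$, which your $C^2$ statement does supply. Your compactness arguments for the first and third bullets are correct and essentially the ones implicit in the paper's covering argument.
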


\begin{figure}[H]
	\begin{center}
		\small
		\tikzstyle{every picture}=[scale=1.0*1.5]
		\input{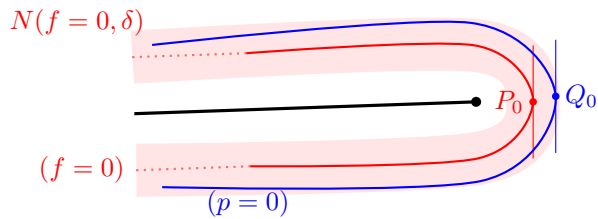}
	\end{center}		
	\caption{Algebraic realization.}
	\label{fig03}
\end{figure}
We prove this proposition in Subsection \ref{prop:proofpropctrltang} below.

By taking the numbers $\epsilon > 0$ and $\delta > 0$ appearing in Propositions 
\ref{prop:smooth-real} and \ref{prop:alg-real} sufficiently small, we get:

\begin{theorem}
\label{th:realizationA}
    Any compact connected generic transversal graph can be realized as the Poincar\'e--Reeb 
   graph of a connected algebraic domain of finite type.
\end{theorem}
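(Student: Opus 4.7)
The plan is to chain Proposition \ref{prop:smooth-real} and Proposition \ref{prop:alg-real}, choosing the two approximation parameters $\epsilon$ and $\delta$ small enough to preserve the combinatorial structure of the Poincaré--Reeb graph. Given a compact connected generic transversal graph $G$, Proposition \ref{prop:smooth-real} directly supplies a $C^\infty$ function $f$ such that $\mathcal{C}_f := (f=0)$ contains no critical point of $f$, bounds a finite type domain $\mathcal{D}_f$ in the canonical affine vertical plane $(\mathbb{R}^2, x)$, and whose Poincaré--Reeb graph is vertically equivalent to $G$. Since $\mathcal{C}_f$ is compact it lies in the interior of some closed disk $K$, and its vertical tangents are simple, one per vertex of $G$.

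I would then feed $f$ and $K$ to Proposition \ref{prop:alg-real}: for every $\delta > 0$ it yields a polynomial $p \in \mathbb{R}[x,y]$ such that $(p=0) \cap K$ lies in $N(\mathcal{C}_f,\delta)$, and whose vertical tangents inside $K$ are in $\delta$-close bijective correspondence with those of $\mathcal{C}_f$, with no spurious tangents in $K$. Let $\mathcal{C}_p$ denote the union of those connected components of $(p=0)$ contained in $N(\mathcal{C}_f,\delta)$; for $\delta$ small these are smooth one-dimensional algebraic curves (by continuity of $\nabla p$ near the non-critical set $\mathcal{C}_f$), and they bound a compact algebraic domain $\mathcal{D}_p$ in the sense of Definition \ref{def:algebraicDomain}. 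Any other components of $(p=0)$ lie outside $K$ and are discarded.

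The core step is to check that, for $\delta$ small enough, $\mathcal{D}_p \approx_v \mathcal{D}_f$ in the sense of Definition \ref{def:CurveEquiv1}. I would shrink $\delta$ below half the minimum horizontal distance between distinct topological critical points of $\mathcal{C}_f$, which forces the vertical tangents of $\mathcal{C}_p$ to sit on pairwise distinct vertical lines in the same $x$-order as those of $\mathcal{C}_f$; the interior/exterior and up/down type of each tangency (cf.\ Figure \ref{fig1011}) is inherited because Proposition \ref{prop:alg-real} produces $\mathcal{C}_p$ locally on the prescribed side of each vertical tangent line. Away from the finitely many vertical tangents, both curves split as finite unions of graphs of smooth functions of $x$ and can be aligned by isotopies preserving each vertical line; these local isotopies patch across the tangents using the type-matching to yield a vertical equivalence $\mathcal{D}_f \approx_v \mathcal{D}_p$. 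Proposition \ref{th:equiv} then transfers the Poincaré--Reeb graph, and connectedness of $G$ forces $\mathcal{D}_p$ to be connected. The hardest part of the whole argument is exactly the type- and order-preservation of vertical tangents under polynomial approximation, which is the delicate content of Proposition \ref{prop:alg-real} beyond the naive uniform approximation of Corollary \ref{cor:approxpol}.
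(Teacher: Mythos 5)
Your proposal follows exactly the paper's own route: the paper proves this theorem by chaining Proposition \ref{prop:smooth-real} with Proposition \ref{prop:alg-real}, taking $\epsilon$ and $\delta$ sufficiently small. Your additional elaboration on why the order and type of vertical tangents are preserved, and the appeal to Proposition \ref{th:equiv} to transfer the graph, correctly fills in details the paper leaves implicit.
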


\begin{proof}[Proof of the theorem]
    Starting with a compact connected transversal generic graph $G$, 
    it can be realized by a smooth function $f$ (Proposition \ref{prop:smooth-real}), 
    which in turn can be replaced by a polynomial map $p$ (Proposition \ref{prop:alg-real}). 
\end{proof}

The referees ask:
\begin{question*}
Is it possible to estimate the degree of a polynomial defining the algebraic domain in term of the combinatorics of the graph $G$?
\end{question*}
A referee suggested to use a degree effective version of the $\mathcal{C}^k$ Weierstrass polynomial approximation theorem, as in \cite[Theorem 2]{bagby}. 
Furthermore, in \cite{lerarioStecconi}, the authors construct an algebraic hypersurface that approximates a smooth compact hypersurface with a control of its minimal degree in terms of geometric data of the hypersurface.

\subsection{Proof of Proposition \ref{prop:alg-real}}  
\label{prop:proofpropctrltang}         

\textbf{Compact support.}
Let $M>0$ such that $(f=0) \subset [-(M-1),M-1]^2$ (remember that $(f=0)$ is 
assumed to be included in a compact set).
We replace the function $f$ by a function $g$ with compact support. 
More precisely, let $g : \Rr^2 \to \Rr$ be a function such that:
\begin{itemize}
    \item $g$ is $C^3$,
    \item $f=g$ on $[-(M-1),M-1]^2$,
    \item $g=0$ outside $(-M,M)^2$,
    \item $g$ does not vanish in the intermediate zone (hatched area below).
\end{itemize}

\begin{figure}[H]
	\begin{center}
		\small
		\tikzstyle{every picture}=[scale=1.0*0.5]
		\input{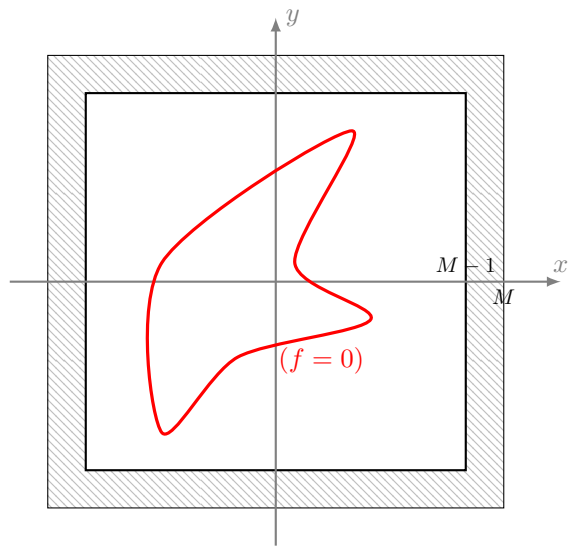}
	\end{center}		
	\caption{Compact support of $g$.}
	\label{fig04}
\end{figure}

Such a function may be constructed using an adequate partition of unity.

\textbf{Polynomial approximation of $g$ and $f$.}
We need a polynomial $p$ approximating $g$, but also that some partial derivatives of $p$ approximate the corresponding partial derivatives of $g$.
This can be done by a $\mathcal{C}^k$ Weierstrass polynomial approximation. More precisely, one can use the density of polynomials in the $\mathcal{C}^k$ topology, as stated in \cite[Proposition 1.3.7.]{DeLellis2019}. Nevertheless, we state such a result and emphasise which partial derivatives we need to approximate. 

\begin{lemma}
\label{lem:approx}
Let us fix $\epsilon>0$. There exists a polynomial $p \in \Rr[x,y]$ such that,
for all $(x,y) \in [-M,M]^2$:
$$
\left| p(x,y) - g(x,y) \right| \le (2M)^3\epsilon, \qquad
\left| \partial_y p(x,y) - \partial_y g(x,y) \right| \le (2M)^2\epsilon,$$
$$
\left| \partial_x p(x,y) - \partial_x g(x,y) \right| \le (2M)^2\epsilon,
\qquad \left| \partial_{y^2} p(x,y) - \partial_{y^2} g(x,y) \right| \le 2M\epsilon.
$$
\end{lemma}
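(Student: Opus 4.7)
The plan is to express both $g$ and $p$ as iterated triple integrals of $\partial_x\partial_y\partial_y g$ and $p_0$ respectively, then subtract and apply the uniform bound $|p_0 - \partial_x\partial_y\partial_y g| < \epsilon$ on $[-M,M]^2$. The key fact enabling this is that $g$ has compact support strictly inside $(-M,M)^2$, so $g$ and all its partial derivatives vanish on the lines $\{x=-M\}$ and $\{y=-M\}$ inside $[-M,M]^2$.

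First I would note that three applications of the fundamental theorem of calculus give, for every $(x,y) \in [-M,M]^2$,
\[
g(x,y) \;=\; \int_{-M}^{x}\!\int_{-M}^{y}\!\int_{-M}^{y_1} \partial_u\partial_v\partial_v g(u,v)\,dv\,dy_1\,du,
\]
using $g(-M,y)=0$, $\partial_u g(u,-M)=0$ and $\partial_u\partial_v g(u,-M)=0$. By construction, the same formula holds with $g$ replaced by $p$ and $\partial_u\partial_v\partial_v g$ replaced by $p_0$. Subtracting the two identities yields
\[
p(x,y)-g(x,y) \;=\; \int_{-M}^{x}\!\int_{-M}^{y}\!\int_{-M}^{y_1} \bigl[p_0(u,v) - \partial_u\partial_v\partial_v g(u,v)\bigr]\,dv\,dy_1\,du,
\]
and bounding the integrand by $\epsilon$ while bounding the volume of the domain of integration by $(2M)^3$ gives the first estimate.

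For the partial-derivative estimates I would simply differentiate both expressions under the integral signs: this yields
\[
\partial_y p(x,y)-\partial_y g(x,y) \;=\; \int_{-M}^{x}\!\int_{-M}^{y} [p_0 - \partial_u\partial_v\partial_v g](u,v)\,dv\,du,
\]
\[
\partial_x p(x,y)-\partial_x g(x,y) \;=\; \int_{-M}^{y}\!\int_{-M}^{y_1} [p_0 - \partial_x\partial_v\partial_v g](x,v)\,dv\,dy_1,
\]
\[
\partial_{yy} p(x,y)-\partial_{yy} g(x,y) \;=\; \int_{-M}^{x} [p_0 - \partial_u\partial_v\partial_v g](u,y)\,du,
\]
where the last identity again relies on the boundary vanishing $\partial_v\partial_v g(-M,y)=0$. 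Bounding the integrands uniformly by $\epsilon$ and the volumes by $(2M)^2$, $(2M)^2$ and $2M$ respectively gives the three remaining estimates.

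There is no real obstacle here; the lemma is essentially a bookkeeping exercise in the fundamental theorem of calculus. The only subtle point is recognising that the compact support of $g$ inside the open square $(-M,M)^2$ is precisely what is needed to express $g$ (and its lower-order derivatives) as iterated integrals of $\partial_x\partial_y\partial_y g$ with base point on the corner $(-M,-M)$, thereby matching the explicit formula defining $p$.
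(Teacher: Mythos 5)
Your proof is correct and follows essentially the same route as the paper's: both exploit the vanishing of $g$ and its derivatives on the lines $x=-M$ and $y=-M$ (from the compact support in $(-M,M)^2$) together with the uniform bound $|p_0-\partial_x\partial_{y}\partial_y g|<\epsilon$, the only cosmetic difference being that the paper establishes the $\partial_{y^2}$ estimate first and obtains the others by successive integration, whereas you write the zeroth-order identity and differentiate it.
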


In order to be self-contained we give a short proof, inspired by \cite{Elredge}.
\begin{proof}
By Corollary \ref{cor:approxpol} 
applied to the function $\partial_x\partial_y\partial_y g$ and to $(a,b) = (-M, M)$, 
there exists a polynomial $p_0 \in \Rr[x,y]$ such that:
$\forall (x,y) \in [-M,M]^2$ $\left| p_0(x,y) - \partial_x\partial_y\partial_y g(x,y) \right| < \epsilon.$
Now our polynomial $p \in \Rr[x,y]$ is defined by a triple integration :
    $$p(x,y) = \int_{-M}^{x} \int_{-M}^{y} \int_{-M}^{y} p_0(u,v) \, dv \,dv\, du.$$
    
We start by proving the last inequality.
By Fubini theorem:
$\partial_{y^2} p(x,y) = \int_{-M}^{x} p_0(u,y) \, du.$
Therefore:
$$\left| \partial_{y^2} p(x,y) - \partial_{y^2} g(x,y) \right|
= \left| \int_{-M}^{x} \big(p_0(u,y) - \partial_x\partial_{y^2} g(u,y)\big)\, du  \right|
\le \left| \int_{-M}^{x} \epsilon \,du \right| \le 2M\epsilon.$$
The first equality is a consequence of the fact that:
$\int_{-M}^{x}\partial_x\partial_{y^2} g(u,y)\, du
= \partial_{y^2} g(x,y) - c(y)$
where $c(y) = \partial_{y^2} g(-M,y)$. 
As $g$ vanishes outside $(-M,M)^2$, for those points we have
$\partial_{y^2} g(x,y)=0$ so that $c(y)=0$.  
The inequality following it results from the definition of the polynomial $p_0$.
By successive integrations we prove the other inequalities. 
\end{proof}

Inside the square $[-M,M]^2$ the curve $(p=0)$ defined for a sufficiently small $\epsilon$ is in a neighborhood of $(f=0)$. However, remark that $(p=0)$ can also vanish outside the square $[-M,M]^2$.

\bigskip

\textbf{The curve $(p=0)$ inside the square.}

Let us explain the structure of the curve $(p=0)$ around a point $P_0 \in (f=0)$ where the tangent is not vertical (recall that $f=g$ inside the square $[-(M-1),M-1]^2$). 
\begin{itemize}
	\item Fix $\delta>0$. Let $B(P_0,\delta)$ be a neighborhood of $P_0$.
	On this neighborhood $f$ takes positive and negative values.
	\item Let $\eta>0$ and $Q_1,Q_2 \in B(P_0,\delta)$ such that 
	$f(Q_1)>\eta$ and $f(Q_2)<-\eta$.
	\item We choose the $\epsilon$ of Lemma \ref{lem:approx} such that $(2M)^3 \epsilon<\eta/2$. 
	\item $p(Q_1)>f(Q_1)-(2M)^3\epsilon>\eta/2>0$; a
	 similar computation gives $p(Q_2)<0$, hence $p$ vanishes 
	 at a point $Q_0 \in [Q_1Q_2]\subset B(P_0,\delta)$.
	\item Because we supposed $\partial_y f \neq 0$ in 	$B(P_0,\delta)$,
	we also have $\partial_y p \neq 0$. Hence $(p=0)$ is a smooth simple curve 
	in $B(P_0,\delta)$ with no vertical tangent.
\end{itemize}

\begin{figure}[H]
	\begin{center}
		\small
		\tikzstyle{every picture}=[scale=1.0*0.7]
		\input{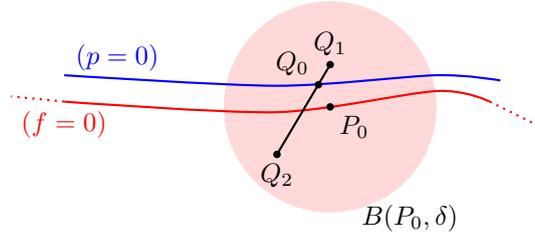}
	\end{center}		
	\caption{Existence of the curve $(p=0)$.}
	\label{fig05}
\end{figure}

Notice that the construction of $(p=0)$ in $B(P_0,\delta)$ depends on $\epsilon$, whose choice depends on the point $P_0$. To get a common choice of $\epsilon$, we first cover the compact curve $(f=0)$ by a finite number of balls $B(P_0,\delta)$ and take the minimum of the $\epsilon$ above.

\bigskip
 
\textbf{Vertical tangency.}

\begin{itemize}
    \item Let $P_0 = (x_0,y_0)$ be a point with a simple vertical tangent of $(f=0)$, that is to say:
    $$\partial_y f(x_0,y_0) = 0 \qquad 
    \partial_x f(x_0,y_0) \neq 0 \qquad
    \partial_{y^2} f(x_0,y_0) \neq 0$$

	\item For similar reasons as before, $(p=0)$ is a non-empty smooth curve passing near $P_0$.
	
    \item In the following we may suppose that the curve $(f=0$) is locally at the left of its vertical tangent, that is to say:
    $$\partial_x f(x_0,y_0) \times \partial_{y^2} f(x_0,y_0) > 0$$

	An example of this behavior is given by $f(x,y) := x+y^2$ at $(0,0)$.
	
	\begin{figure}[H]
	\begin{center}
		\small
		\tikzstyle{every picture}=[scale=1.0*0.7]
		\input{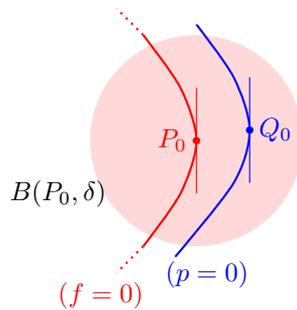}
	\end{center}		
	\caption{Vertical tangent.}
	\label{fig06}
	\end{figure}  

	\item Fix $\delta>0$. Let $B(P_0,\delta)$ be a neighborhood of $P_0$.

    \item $\partial_y p \sim \partial_y f$. As $\partial_y f$ vanishes at the point $P_0$ of $(f=0)$, then $\partial_y f$ takes positive and negative values near this point. Let $\eta>0$, and $Q_1 \in (f=0)$ such that 
    $\partial_y f(Q_1)>\eta$. For a sufficiently small $\epsilon$, there exists $R_1 \in (p=0)$ such that $\partial_y f(R_1)>\frac23\eta$. Therefore $\partial_y p(R_1)>\frac13\eta>0$. For a similar reason there exists $R_2\in (p=0)$ such that $\partial_y p(R_2)<0$. Then there exists $Q_0 \in (p=0) \cap B(P_0,\delta)$ such that $\partial_y p(Q_0)=0$.
    
    \item $\partial_x p \sim \partial_x f$. As $\partial_x f(P_0) \neq 0$, one has also  
       $\partial_x p(Q_0) \neq 0$, thus $p$ has a vertical tangent at $Q_0$.
    
    \item $\partial_{y^2} p \sim \partial_{y^2} f$ and they do not vanish near $P_0$ and $Q_0$, 
       therefore the vertical tangent at $Q_0$ for $(p=0)$ is simple and has the same type 
       as the vertical tangent at $P_0$ for $(f=0)$.
    
    \item Moreover as $\partial_{y^2} p \neq 0$ on $(p=0) \cap B(P_0,\delta)$, 
       thus $\partial_y p$ vanishes only once, hence there is only one vertical tangent 
       in this neighborhood.

\end{itemize}

\section{Algebraic realization in the non-compact and connected case}
\label{sec:noncompact}

\subsection{Domains of weakly finite type in vertical planes}    

We consider an algebraic domain $\mathcal{D} \subseteq \Rr^2$ in the sense of Definition 
\ref{def:algebraicDomain},  whose boundary $\mathcal{C} := \partial \mathcal{D}$ is a  
connected but non-compact curve. This curve $\mathcal{C}$ is homeomorphic to a line 
and has two branches at infinity (the germs at infinity of the two connected components of 
$\mathcal{C}\setminus K$, where $K \subset \mathcal{C}$ is a non-empty compact arc). 
Let us suppose that these branches are in generic position w.r.t.~the vertical direction: 
none of them has a vertical asymptote.
This leads us to  Definition \ref{def:domainOfWeaklyFiniteType} below, which represents 
a generalization of the notion of domain of finite type (see Definition \ref{def:domainFiniteType}), 
since we only ask $\pi_{|_{\mathcal{C}}}: \mathcal{C}\to \Rr$ to be proper, 
allowing $\pi_{|_{\mathcal{D}}}: \mathcal{D}\to \Rr$ not to be so. In turn, the genericity notion 
is an analog of that introduced in Definition \ref{def:gendom}.

\begin{definition}\label{def:domainOfWeaklyFiniteType}
Let $(\mathcal{P},\pi)$ be a vertical plane. Let $\mathcal{D}\subset\mathcal{P}$ be a closed subset 
homeomorphic to a surface with non-empty boundary. Denote by $\mathcal{C}$ its boundary. 
We say that $\mathcal{D}$ is a 
 \defi{domain of weakly finite type} in $(\mathcal{P},\pi)$ if:
\begin{enumerate}
    \item \label{weaklyproperness} 
         the restriction $\pi_{|_{\mathcal{C}}}: \mathcal{C}\to \Rr$  is proper;
    \item  \label{weaklyfincrit}
        the topological critical set $\Sigma_{\text{top}} (\mathcal{C})$ is finite.
\end{enumerate}
   Such a domain is called \defi{generic} if no two topological critical points of $\mathcal{C}$ 
   lie on the same vertical line. A \emph{Poincar\'e--Reeb graph} of a generic 
   domain of weakly finite type is one of its Poincar\'e--Reeb graphs in the source in the sense 
   of Subsection \ref{ssec:PRsource}.
\end{definition}

For instance, the closed upper half-plane $\mathbb{H}$ in $(\Rr^2, x)$ is a generic domain of weakly finite type (for which $\Sigma_{\text{top}} (\mathcal{C}) = \emptyset$). 
Its Poincar\'e-Reeb graphs are the sections of the restriction $x : \mathbb{H} \to \mathbb{R}$ 
of the vertical projection.

\subsection{The combinatorics of non-compact Poincar\'e--Reeb graphs} 
\label{ssec:combnon-compPR}    
   
Let $\mathcal{D}$ be a domain of weakly finite type in a vertical plane $(\mathcal{P}, \pi)$. 
When $\mathcal{C}$ is homeomorphic to a line, we distinguish three cases, 
depending on the position of $\mathcal{D}$ and of the branches of $\mathcal{C}$. We enrich  
the Poincar\'e--Reeb graph, by adding \emph{arrowhead vertices} representing  
directions of escape towards infinity. Moreover, the unbounded edges 
are decorated with \emph{feathers} oriented upward or downward, to indicate 
the unbounded vertical intervals contained in the domain.

\textbf{Case A.} \emph{One arrow.}

\begin{figure}[H]
	\begin{center}
		\small
		\tikzstyle{every picture}=[scale=1.0*0.9]
		\input{newfigures/fig-reeb-401.tikz}
	\end{center}	
  \caption{Case A.}
\end{figure}

In case A, the two branches of $\mathcal{C}$ are going in the same direction 
(to the right or to the left, as defined by the orientations of $\mathcal{P}$ and the target 
line $\Rr$ of $\pi$), $\mathcal{D}$ being in between.  Then we get a Poincar\'e--Reeb graph 
with one arrow (and no feathers).

\textbf{Case B.} \emph{Two arrows.}

\begin{figure}[H]
	\begin{center}
		\small
		\tikzstyle{every picture}=[scale=1.0*0.9]
		\input{newfigures/fig-reeb-402.tikz}
	\end{center}	
  \caption{Case B.}\label{fig:caseB}
\end{figure}

In case B, the two branches have opposite directions. Then we have a Poincar\'e--Reeb 
graph with two arrows, each arrow-headed edge being decorated with feathers 
(above or below), indicating the non-compact vertical intervals of type 
$[0,+\infty[$ or $]-\infty,0]$ contained in the domain bounded by that edge.

\textbf{Case C.} \emph{Three arrows.}

\begin{figure}[H]
	\begin{center}
		\small
		\tikzstyle{every picture}=[scale=1.0*0.8]
		\input{newfigures/fig-reeb-403.tikz}
	\end{center}	
  \caption{Case C.}
\end{figure}

In case C, where the two branches are going to the same direction but $\mathcal{D}$ 
is in the ``exterior'', we have a graph with three arrows: two arrows with simple feathers 
(for the vertical intervals of type $[0,+\infty[$ or $]-\infty,0]$) and one arrow with double 
feathers (for the vertical intervals of type $]-\infty,+\infty[$).

\begin{remark}
~ 
\begin{itemize}
  \item We can avoid the contraction of non-compact vertical intervals in the construction of the Poincar\'e--Reeb graph in case B and case C, in order to still have a graph $G$ naturally embedded in an affine plane.
  We first define a subset $\mathcal{H} \subset \Rr^2$ that contains $\mathcal{C}$, whose boundary $\partial \mathcal{H} = H_+ \cup H_-$ is the union of two curves homeomorphic to $\Rr$, transverse to the vertical foliation (one above $\mathcal{C}$, one below $\mathcal{C}$).
  
\begin{figure}[H]
	\begin{center}
		\small
		\tikzstyle{every picture}=[scale=1.0*0.6]
		\input{newfigures/fig-reeb-404.tikz}
	\end{center}	
\end{figure}  
  
  We change Definition \ref{def:equivRel}, by contracting vertical intervals of 
  $\mathcal{D} \cap \mathcal{H}$ (instead of vertical intervals of 
  $\mathcal{D}$) :  $P\sim_{\mathcal{D}} Q$ if $\pi(P)=\pi(Q):=x_0$ and 
  $P$ and $Q$ are on the same connected component of 
  $\mathcal{D} \cap \mathcal{H} \cap \pi^{-1}(x_0)$.  
    
  \item The feather decoration on non-arrowheaded edges can be recovered from feathers 
      at the other arrows and are omitted. 

  \item The cases A and C are complementary (or dual of each other). We can pass from 
      one to the other by considering $\mathcal C$ as the boundary of $\mathcal D$ or 
     of $\Rr^2 \setminus \mathcal D$.
 
  \item From this point of view, case B is its own complementary case. More on such complementarities will be said later (see Section \ref{sec:general}).
\end{itemize}
\end{remark}

\begin{proposition}\label{prop:treeNonCompact}
      Let $\mathcal{D}$ be a simply connected generic domain of weakly finite 
      type in a vertical plane. 
       Then its Poincar\'e--Reeb graph is a generic transversal binary tree.
\end{proposition}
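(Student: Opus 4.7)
The approach is to adapt the argument of Proposition \ref{prop:treeCompact} to the weakly finite type (and possibly non-compact) setting. There are two properties to establish: that the Poincar\'e--Reeb graph $G$ is a tree, and that its vertices have valency $1$ or $3$.

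For the tree property, I would work with a Poincar\'e--Reeb graph in the source $G \subset \mathcal{D}$, as furnished by Definition \ref{def:domainOfWeaklyFiniteType} via the section construction of Subsection \ref{ssec:PRsource}. The key observation is that $\mathcal{D}$ deformation retracts onto $G$. Indeed, the collapsing map $\rho_{\mathcal{D}}$ restricted to $\mathcal{D}$ has fibers given by the connected components of $\pi^{-1}(x_0)\cap\mathcal{D}$, each of which is a closed interval on a vertical line (possibly unbounded, but still contractible); one may define a fiberwise linear retraction onto $G$, sliding each point of such a component to its unique intersection with the section. This yields a homotopy equivalence $\mathcal{D}\simeq G$. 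Since $\mathcal{D}$ is simply connected by hypothesis, so is $G$, and a simply connected graph is a tree.

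For the valency statement, I would argue as in the proof of Proposition \ref{prop:whenisPRgeneric}: by genericity no two topological critical points of $\mathcal{C}$ lie on the same vertical line, so each vertex of $G$ is the image under the section of a single topological critical point $P$ of $\mathcal{C}$. If $P$ is exterior, the vertical line through $P$ lies locally outside $\mathcal{D}$, so by Lemma \ref{lem:oneside} the two arcs of $\mathcal{C}$ abutting at $P$ lie on the same side, and the corresponding vertex of $G$ is an endpoint of a single edge, hence has valency $1$. If $P$ is interior, the vertical line enters $\mathcal{D}$, and a neighborhood of $P$ in $\mathcal{D}$ decomposes into three bands meeting along the critical segment through $P$, yielding three edges of $G$ incident to the vertex and hence valency $3$. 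The possible presence of unbounded edges (the ``arrows'' of Subsection \ref{ssec:combnon-compPR}) does not create additional vertices, in accordance with Definition \ref{def:transversalGraph}.

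The main subtlety I expect is the deformation retraction step in the non-proper case: the restriction $\pi_{|_{\mathcal{D}}}$ need no longer be proper, so fibers of $\rho_{\mathcal{D}}|_{\mathcal{D}}$ may be non-compact, and one has to verify that the fiberwise retraction assembles into a globally continuous deformation. This reduces to checking that the section from Proposition \ref{prop:existsect} can still be chosen inside the bands (now possibly unbounded in the horizontal direction), and that continuity of the retraction persists across the finitely many critical vertical lines; both follow from the finiteness of $\Sigma_{\text{top}}(\mathcal{C})$ and the properness of $\pi_{|_{\mathcal{C}}}$ granted by Definition \ref{def:domainOfWeaklyFiniteType}.
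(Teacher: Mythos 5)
Your argument is correct, but it takes a genuinely different route from the paper's. The paper reduces to the compact case: it picks a large compact convex topological disk $K$ containing all topological critical points of $\mathcal{C}$ (with $\partial K$ topologically transversal to $\mathcal{C}$), notes that $\mathcal{D}' := \mathcal{D}\cap K$ is a compact generic domain of finite type, applies Proposition \ref{prop:treeCompact} — whose tree property ultimately rests on the Euler-characteristic integration of Proposition \ref{th:graph} — to get a complete binary tree, and then reattaches the unbounded parts as arrowhead edges according to cases A, B, C. You instead work intrinsically in the non-compact setting: you build a fiberwise deformation retraction of $\mathcal{D}$ onto a Poincar\'e--Reeb graph in the source, transfer simple connectivity to conclude $G$ is a tree, and read off the valencies from the local picture at interior and exterior critical points exactly as in Proposition \ref{prop:whenisPRgeneric}. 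Your route avoids the Euler-characteristic machinery and the (implicit in the paper) check that truncation by $K$ changes the graph only by adding the circled boundary vertices; in exchange you must verify the continuity of the retraction across the finitely many critical vertical lines and on unbounded fibers, which you rightly flag as the delicate step. That verification does go through: each fiber component is a (possibly unbounded, hence still convex) interval meeting the section in exactly one point, and the section points of the two merging components tend to the critical point as one approaches an interior critical line, so the convex-combination homotopy taken in a global trivialization $\mathcal{P}\cong\Rr\times\Rr$ is continuous. Both proofs are sound; the paper's is shorter given the compact results already established, while yours is more self-contained and makes the homotopy equivalence $\mathcal{D}\simeq G$ explicit.
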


\begin{proof}
     After applying a vertical equivalence in the sense of Definition \ref{def:CurveEquiv1}, 
     we may assume that $\mathcal{D}$ is embedded in the canonical vertical plane 
     $(\mathbb{R}^2, x)$. 

   Denote $\mathcal{C}:=\partial\mathcal{D}.$ The idea is to intersect $\mathcal{C}$ 
    (and $\mathcal{D}$) with a sufficiently big compact convex topological 
    disk $K$, to apply our previous construction for 
    $\mathcal{D} \cap K$, then to add suitable arrows.  In the figure below, such a disk is 
    represented as a Euclidean one, but one has to keep in mind that its shape may be different, 
    for instance a rectangle, in order to achieve topological transversality between its boundary and 
    the curve $\mathcal{C}$. 
    
    \begin{figure}[H]
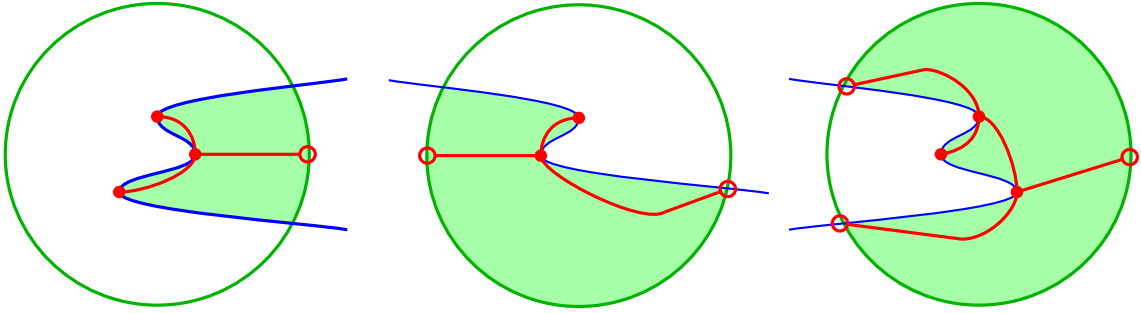

  \begin{minipage}{0.3\textwidth}
	\begin{center}
		\small
		\tikzstyle{every picture}=[scale=1.0*1]
		\input{newfigures/fig-reeb-405.tikz}
	\end{center}
  \end{minipage}\quad
  \begin{minipage}{0.3\textwidth}
	\begin{center}
		\small
		\tikzstyle{every picture}=[scale=1.0*1]
		\input{newfigures/fig-reeb-406.tikz}
	\end{center}
  \end{minipage}\quad
  \begin{minipage}{0.3\textwidth}
	\begin{center}
		\small
		\tikzstyle{every picture}=[scale=1.0*1]
		\input{newfigures/fig-reeb-407.tikz}
	\end{center}
  \end{minipage}
  \caption{Cases A, B, C (from left to right). The filled region is the compact 
     domain of finite type 
    $\mathcal{D}' := \mathcal{D} \cap K$. A Poincar\'e--Reeb graph in the source is also displayed. 
    The Poincar\'e--Reeb graph of $\mathcal{D}$ is obtained by replacing each circled 
    vertex by an arrow.}
\end{figure}

    First, notice that the case where $\mathcal{C}$ is compact is already known 
    (see Propositions \ref{prop:whenisPRgeneric} and \ref{prop:treeCompact}). 
    Assume therefore that $\mathcal{C}$ is a non-compact curve. 
    Then $\pi_{|\mathcal{C}}$ has a finite number of topological critical points. 
    We consider a sufficiently large convex compact topological disk $K$ 
    that contains all these critical points. Let $\mathcal{D}' := \mathcal{D} \cap K$ and 
    $\mathcal{C}' := \partial \mathcal{D}'$. We are then in the compact situation studied 
    before. By Proposition \ref{prop:treeCompact},  
    the Poincar\'e--Reeb graph of $\mathcal{D}'$ is a tree. 
    We add arrows (at each circled dot below) depending on each case.
\end{proof}
 
We extend now the notion of vertical equivalence of transversal graphs from 
Definition \ref{def:CurveEquiv1} to enriched non-compact transversal graphs, requiring that arrowhead vertices are sent to arrowhead vertices. Then we have the following generalization of Theorem 
\ref{th:equiv}, whose proof is similar:
   
\begin{proposition}
\label{prop:equivGeneral}
      Let $\mathcal{D}$, $\mathcal{D}'$ be generic simply connected domains of weakly 
      finite type. Then:
      $$\mathcal{D} \approx_v \mathcal{D}' \iff G \approx_v G'.$$
\end{proposition}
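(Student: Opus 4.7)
The plan is to mirror the strategy of Proposition~\ref{th:equiv}, augmenting both directions to account for the enrichment of the Poincar\'e--Reeb graph by arrowhead vertices and feathers introduced in Subsection~\ref{ssec:combnon-compPR}. The properness of $\pi_{|\mathcal{C}}$ required in Definition~\ref{def:domainOfWeaklyFiniteType} ensures that only finitely many asymptotic behaviors of $\mathcal{C}$ need to be encoded, and by Proposition~\ref{prop:treeNonCompact} the graph $G$ is a generic transversal binary tree, so its combinatorial structure near infinity is exhausted by the three model cases A, B, C.

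For the direction $\Rightarrow$, I proceed as in the compact case. Suppose $\Phi\colon \mathcal{P}\to\mathcal{P}'$ realizes $\mathcal{D}\approx_v \mathcal{D}'$, with $\psi\colon\Rr\to\Rr$ the corresponding map on the target of the projection. Then $\Phi$ is compatible with the vertical equivalence relations $\sim_{\mathcal{D}}$ and $\sim_{\mathcal{D}'}$, hence descends to a homeomorphism $\tilde{\Phi}$ between the collapses sending the Poincar\'e--Reeb graph of $\mathcal{D}$ onto that of $\mathcal{D}'$ while conjugating the projections through $\psi$. The new point is to check that $\tilde{\Phi}$ respects the decorations. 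Arrowhead vertices correspond to the ends of $\mathcal{C}$, each attached either to $+\infty$ or $-\infty$ in $\Rr$; since $\psi$ is an orientation-preserving self-homeomorphism of $\Rr$, these attachments are preserved. Similarly, the feathers record, on each edge, whether an unbounded vertical ray inside $\mathcal{D}$ points upward or downward, a datum preserved because $\Phi$ is orientation-preserving in restriction to every vertical line.

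For the direction $\Leftarrow$, I reconstruct $\mathcal{D}$ up to vertical equivalence from the enriched graph $G$, as in the compact case, by thickening $G$ using vertical segments (see Figure~\ref{fig1009}), with one modification: along each unbounded edge ending at an arrow, the thickening is extended to infinity. If the edge carries a single upward or downward feather, the thickening opens into an unbounded half-strip in the corresponding vertical direction; a double-feather edge yields a full bi-infinite vertical strip. The local models at interior and exterior vertices of valency $3$ or $1$, together with the three arrow-plus-feather models of Subsection~\ref{ssec:combnon-compPR}, assemble into a canonical thickened domain $\overline{\mathcal{D}}$ that is vertically equivalent to $\mathcal{D}$. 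A vertical equivalence $G\approx_v G'$ realized by some $\tilde{\Phi}$ then induces, by working edge by edge and gluing the local models, a vertical equivalence of the thickenings, hence $\mathcal{D}\approx_v\mathcal{D}'$.

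The main obstacle will be controlling the reconstruction at infinity. In the compact case one only needs to match finitely many local models at vertices to recover $\mathcal{D}$ up to vertical isotopy; here one must additionally verify that the arrow-plus-feather decoration at each end of $G$ contains exactly the information required to reconstruct $\mathcal{D}$ near infinity. The properness of $\pi_{|\mathcal{C}}$, combined with the simple connectedness of $\mathcal{D}$ that rules out configurations more complicated than the three cases A, B, C, is precisely what makes the finite list of asymptotic models suffice; once this is in place, the gluing of local models into a global vertical equivalence is routine, in the same spirit as in the compact argument of Proposition~\ref{th:equiv}.
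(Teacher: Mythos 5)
Your proposal is correct and follows essentially the same route as the paper, which simply declares the proof to be similar to that of Proposition~\ref{th:equiv} after extending vertical equivalence to enriched graphs; your explicit bookkeeping of how arrowheads and feathers are preserved under $\Phi$, and how the thickening is prolonged into half-strips or bi-infinite strips at the decorated ends, is exactly the intended content. One small adjustment: for a domain of weakly finite type that is not of finite type, the $\mathcal{D}$-collapse $\tilde{\mathcal{P}}$ need not be a vertical plane (for the closed upper half-plane it is a closed half-plane), which is precisely why Definition~\ref{def:domainOfWeaklyFiniteType} takes $G$ to be a Poincar\'e--Reeb graph \emph{in the source}; so in the direction $\Rightarrow$ you should argue directly that $\Phi$ carries a section of $(\rho_{\mathcal{D}})_{|_{\mathcal{D}}}$ to a valid section of $(\rho_{\mathcal{D}'})_{|_{\mathcal{D}'}}$, well defined up to isotopies stabilizing the vertical lines, rather than passing through the collapsed planes.
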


\subsection{Algebraic realization}     

We extend our realization thorem (Theorem \ref{th:realizationA})
of generic transversal graphs as Poincar\'e--Reeb graphs of algebraic domains 
 to the simply connected but non-compact case. 
The idea is to use the realization from the compact setting and consider the union with a line 
(or a parabola); finally, we take a neighboring curve. 

\begin{example}\label{ex:addBranches}
Here is an example, see Figure \ref{fig:exAddBranches}: 
starting from an ellipse $(f=0)$, we consider the union with a line $(g=0)$, 
then the unbounded component of $(f g = \epsilon)$ is a non-compact curve with two branches 
that have the shape of the ellipse on a large arc, if the sign of $\epsilon$ is conveniently 
chosen.

\begin{figure}[H]
	\begin{center}
		\small
		\tikzstyle{every picture}=[scale=1.0*1]
		\input{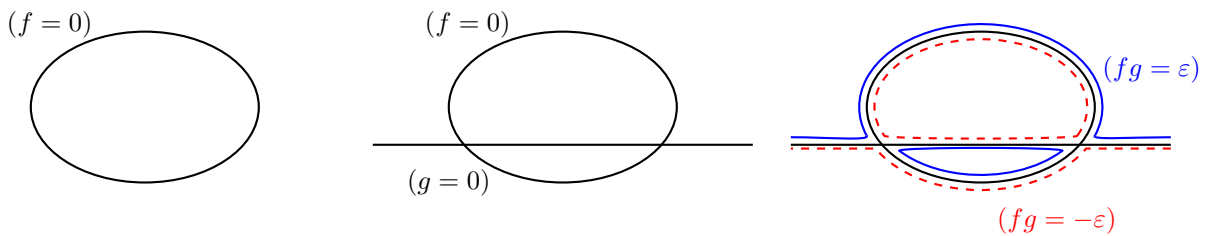}
	\end{center}
  \caption{Adding two branches to an ellipse.\label{fig:exAddBranches}}
\end{figure}

\end{example}

\begin{theorem}
\label{th:realizationB}
  Let $G$ be a connected, non-compact, 
  generic transversal tree in a vertical plane, 
  with at most three unbounded edges, not all on the same side (left or right), 
  enriched with compatible arrows and feathers (like in cases A, B or C 
  of Section \ref{ssec:combnon-compPR}). Let $G'$ be the compact tree obtained from $G$, by replacing each arrow by a sufficiently long edge with a circle vertex at the extremity. If $G'$ can be realized by a connected real algebraic curve, then $G$ can be realized 
  as the Poincar\'e--Reeb graph 
  of a simply connected, non-compact  algebraic domain in $(\Rr^2, x)$.
\end{theorem}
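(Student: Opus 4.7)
The plan is to reduce to the compact case by starting with an algebraic realization of $G'$ and then perturbing its defining polynomial. By hypothesis, $G'$ is realized by a connected real algebraic curve $(f = 0)$ bounding a compact algebraic domain $\mathcal{D}'$ whose Poincar\'e--Reeb graph is vertically equivalent to $G'$. The ``extra'' circle vertices that distinguish $G'$ from $G$ are exterior valency-$1$ vertices of $G'$, corresponding to exterior topological critical points of $(f=0)$. These are the places at which we wish to puncture the boundary and extend to infinity.

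Next, I would choose an auxiliary polynomial $g \in \Rr[x,y]$ whose zero locus $(g=0)$ is a line in cases A and B, and the union of two lines (or a parabola) in case C. In each case $(g=0)$ is positioned so that each of its (at most three) branches at infinity realizes one of the prescribed arrow directions of $G$, and so that on a large compact disk $K$ containing $\mathcal{D}'$, the curves $(f=0)$ and $(g=0)$ intersect transversally, with each intersection point lying in a small neighborhood of one of the circle vertices, and with no new vertical tangencies introduced. This is possible because the circle vertices sit on pendant edges transversal to the vertical foliation, so a generic linear or quadratic $g$ can be prescribed to cut each such pendant edge once.

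Then I would take the perturbed curve $\mathcal{C}_\epsilon := (f \cdot g = \epsilon)$ for a sufficiently small $\epsilon \ne 0$, with the sign chosen according to the desired side of each arrow edge, following the pattern illustrated in Example \ref{ex:addBranches}. For $|\epsilon|$ small, $\mathcal{C}_\epsilon$ is smooth (by the regular-value theorem applied away from the finitely many intersection and critical points, combined with an explicit check at the transversal intersections of $(f=0) \cap (g=0)$, where $\mathcal{C}_\epsilon$ is a hyperbola-like smoothing). Let $\mathcal{C}$ denote the unbounded connected component of $\mathcal{C}_\epsilon$: it follows $(f=0)$ along the ``majority'' of $\mathcal{D}'$ and escapes to infinity along $(g=0)$ in the prescribed directions. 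Take $\mathcal{D}$ to be the closed region bounded by $\mathcal{C}$ on the correct side, so that the feathers of $G$ are respected.

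The main obstacle is verifying that the Poincar\'e--Reeb graph of $\mathcal{D}$ really is vertically equivalent to $G$. On $K$, the curve $\mathcal{C}_\epsilon$ is $C^1$-close to $(f g = 0)$ outside a neighborhood of the finitely many smoothing points, so its vertical tangencies are in bijection with those of $(f=0)$ lying inside the ``kept'' region, producing the interior vertices of $G$; a genericity argument on the choice of $g$ ensures no spurious vertical tangencies appear on the portions of $(g=0)$ used, nor at the smoothings. Outside $K$, the branches of $\mathcal{C}$ are asymptotic to those of $(g=0)$, so $\pi|_{\mathcal{C}}$ is strictly monotone there, yielding exactly the prescribed arrows with the correct feather decorations. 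Applying Proposition \ref{prop:equivGeneral}, the equality of Poincar\'e--Reeb graphs in the enriched sense gives $\mathcal{D} \approx_v G$, completing the argument.
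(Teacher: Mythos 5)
Your overall strategy is the same as the paper's: start from a connected real algebraic curve $(f=0)$ realizing $G'$, multiply by an auxiliary polynomial $g$ whose zero set supplies the behaviour at infinity, and pass to a neighboring level $(fg=\epsilon)$ for small $\epsilon\neq 0$, checking that no spurious vertical tangencies appear and that the order of the critical points is preserved. The verification paragraph (closeness to $(fg=0)$ away from the smoothing points, monotonicity of $\pi$ on the unbounded branches, then Proposition \ref{prop:equivGeneral}) matches what the paper does.

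However, your assignment of auxiliary curves to the cases contains a step that would fail. In case A the two branches of $\mathcal{C}$ must escape to infinity \emph{in the same direction} (both to the right, say), with $\mathcal{D}$ in between; a line cannot produce this, since its two ends go to opposite sides, so smoothing $(fg=\epsilon)$ with $g$ a line always yields two branches escaping in opposite directions --- that is case B, not case A. The paper instead takes $(g=0)$ to be a \emph{parabola} tangent to $(f=0)$ at the rightmost vertex of $G'$, precisely because a parabola has both branches on the same side. Relatedly, you seem to conflate the number of arrows of the enriched graph with the number of branches at infinity of the boundary curve: in case C there are three arrows but the curve still has only two branches at infinity (the third arrow records the unbounded vertical lines entirely contained in $\mathcal{D}$), so ``the union of two lines'' would create four branches and cannot be right. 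The paper handles case C not by a new choice of $g$ but by taking the closure of the complement of the case-A domain, which has the same boundary curve. With these corrections (parabola for A, line placed below the feathered linear subgraph for B, complementation for C), your argument aligns with the paper's proof.
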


\begin{figure}[H]
	\begin{center}
		\small
		\tikzstyle{every picture}=[scale=1.0*1]
		\input{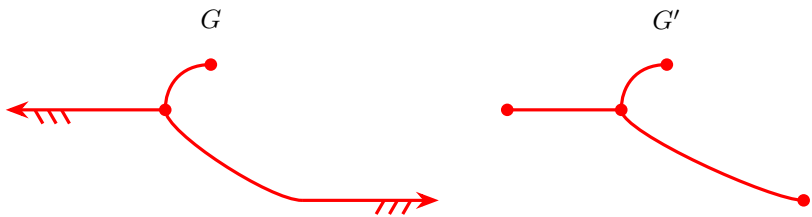}
	\end{center}
    \caption{An example of a tree $G$ (left) and its corresponding compact tree $G'$ (right) after the edges ended with arrows have been replaced by long enough edges having circle vertices.}
    \label{fig:replaceArrowsWithCircleVertices}
\end{figure}

\begin{remark}
Note that in this section we work under the additional hypothesis that the realization from the compact setting is done by a \emph{connected real algebraic curve} and not by a \emph{connected component of a real algebraic curve} as it was done in Theorem \ref{th:realizationA}. We impose this hypothesis, in order not to have difficulties when taking neighboring curves (see Remark \ref{rk:notGoodSituation}).
\end{remark}

\begin{proof}
By hypothesis, there exists a connected real algebraic curve $\mathcal{C}:(f=0)$, $f\in\mathbb{R}[x,y]$ such that $\mathcal{C}$ realizes the newly obtained tree $G'$. In this proof we consider Poincar\'e--Reeb graphs in the source in the sense of Subsection \ref{ssec:PRsource}, so that the graph is situated in the same  plane as the connected real algebraic curve $\mathcal{C}:(f=0)$.

The key idea of the proof is to choose appropriately a non-compact algebraic curve $\mathcal{C}':(g=0)$, $g\in\mathbb{R}[x,y]$ such that when we take a neighboring level of the product of the two polynomials, say $(fg=\epsilon)$ for a sufficiently small $\epsilon>0$, we obtain the desired shape at infinity described by Case A, B or C.
Note that the vertices of the Poincar\'e--Reeb graph are, by definition, transversal intersection points between the polar curve and the level curve. So a small deformation of the level curve will not change this property. Moreover, the neighboring curve must preserve the total preorder between the vertices of the tree. Since there are finitely many such vertices, we can choose $\epsilon$ small enough to ensure this condition holds.

Let us give more details depending on the cases A, B or C.

\textbf{Case A.}
Our goal is to realize the tree from Case A. Namely, we want to add two new non-compact branches that are unbounded in the same direction (see Figure \ref{fig:zoomcaseA}). In order to achieve this, we shall consider the graph $(g=0)$ of a parabola that is tangent to the curve $(f=0)$ in the rightmost vertex of $G'$. Next, consider the real bivariate function $fg:\mathbb{R}^2\rightarrow\mathbb{R}$. The level curve $(fg=0)$ is the union of $\mathcal{C}$ and $\mathcal{C}'.$ Finally, a neighboring curve $(fg=\epsilon)$ realizes the tree $G$, for $\epsilon \neq  0$ sufficiently small.

\begin{figure}[H]
	\begin{center}
		\small
		\tikzstyle{every picture}=[scale=1.0*0.8]
		\input{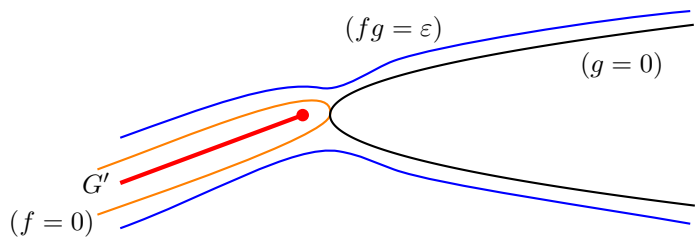}
	\end{center}	
	\caption{Zoom on the construction for case A.\label{fig:zoomcaseA}}
\end{figure}

\begin{example}
Here are the pictures of a graph $G'$ (Figure \ref{fig:excaseA1}) and its realization (Figure \ref{fig:excaseA2}).
\begin{figure}[H]
	\begin{center}
		\small
		\tikzstyle{every picture}=[scale=1.0*0.5]
		\input{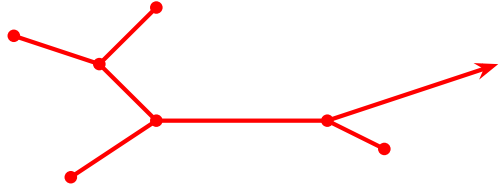}
	\end{center}	
	\caption{Graph $G$ to be realized.\label{fig:excaseA1}}
\end{figure}

\begin{figure}[H]
	\begin{center}
		\small
		\tikzstyle{every picture}=[scale=1.0*0.6]
		\input{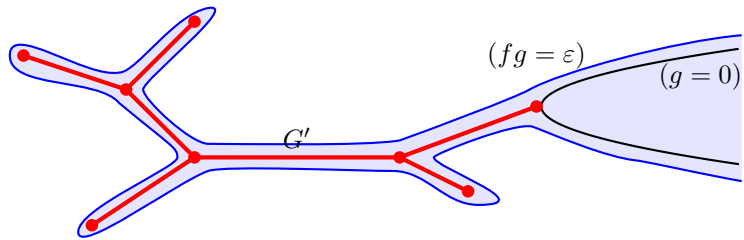}
	\end{center}	
  \caption{Case A: adding two new  branches in the same direction.\label{fig:excaseA2}}
\end{figure}
\end{example}

\medskip

\textbf{Case B.}
In Case B, the goal is to add two new non-compact branches, on opposite sides. First, note that in the presence of two such unbounded branches, the edges decorated by feathers (that is, 
those edges corresponding to the contraction of unbounded segments) form a linear graph $L$.
The extremities of this linear subgraph are the arrowhead vertices of $G$ which we replace by two circular vertices to define $G'$. 

As before, by hypothesis we can consider a connected real algebraic plane curve $\mathcal{C} : (f=0)$ that realizes the graph $G'$. Consider a curve $(g=0)$, algebraic, homeomorphic to a line and situated just below the graph $G'$. More precisely $(g=0)$ is situated in between the linear graph $L$ of $G'$ and the lower part of $(f=0)$ (see Figures \ref{fig:zoomcaseB} and \ref{fig:excaseB2}). The connected component of the neighboring curve $(fg=\epsilon)$ for a sufficiently small $\epsilon \neq 0$ will be the boundary of an algebraic domain that realizes the given tree $G$.

\begin{figure}[H]
	\begin{center}
		\small
		\tikzstyle{every picture}=[scale=1.0*0.8]
		\input{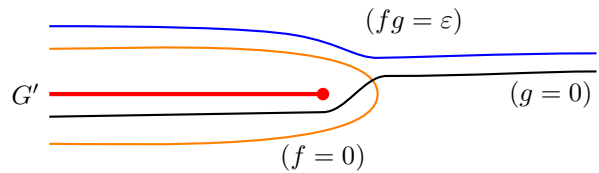}
	\end{center}	
	\caption{Zoom on the construction for case B.\label{fig:zoomcaseB}}
\end{figure}

\begin{example}
Here are the pictures of a graph $G'$ (Figure \ref{fig:excaseB1}) and its realization (Figure \ref{fig:excaseB2}).	
\begin{figure}[H]
	\begin{center}
		\small
		\tikzstyle{every picture}=[scale=1.0*0.5]
		\input{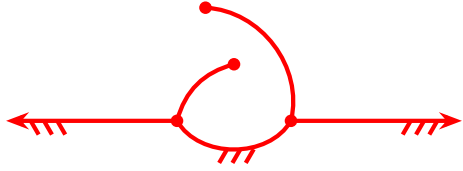}
	\end{center}
	\caption{Graph $G$ to be realized.\label{fig:excaseB1}}
\end{figure}	

\begin{figure}[H]
	\begin{center}
		\small
		\tikzstyle{every picture}=[scale=1.0*0.6]
		\input{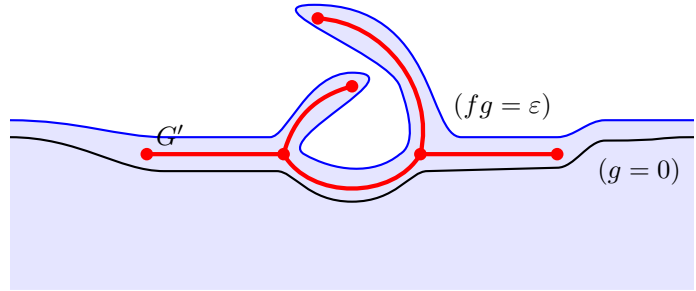}
	\end{center}
  \caption{Case B: adding two new opposite branches. \label{fig:excaseB2}}
\end{figure}

\end{example}

Note that in the above construction there exist other connected components of $(fg=\epsilon)$, for instance in between the curves $(f=0)$ and $(g=0)$, but this is allowed by Definition \ref{def:algebraicDomain}: we considered the algebraic domain $\mathcal{D}$ given by $\partial \mathcal{D}=\mathcal{C},$ where $\mathcal{C} \subsetneq (fg=\epsilon).$

\medskip

\textbf{Case C.}
The domain considered in Case C is the complement of the algebraic domain, say $\mathcal{D}_A$, that we constructed in Case A. Namely, the graph $G$ from Case C is realized by the domain $\mathcal{D}_C$, that is the closure of $\mathbb{R}^2\setminus \mathcal{D}_A.$ Note that in this case the two domains have the same boundary: $\partial \mathcal{D}_C=\partial \mathcal{D}_A=(fg=\epsilon)$ and they are semialgebraic domains.   
\end{proof}

\begin{remark}
\label{rk:notGoodSituation}
Our construction for Theorem \ref{th:realizationB} needs the graph $G'$ to be realized by a connected real algebraic curve. Theorem \ref{th:realizationA} only realizes $G'$ as one connected component $\mathcal{C}_1$ of a real algebraic plane curve $\mathcal{C}$ defined by $(f=0)$; this is not sufficient for our construction. For instance the oval $\mathcal{C}_1$ may be nested inside an oval $\mathcal{C}_2 \subset \mathcal{C}$; the curve $(fg=\epsilon$) of the proof of Theorem \ref{th:realizationB} would no longer satisfy the requested conclusion.

\begin{figure}[H]
	\begin{center}
		\small
		\tikzstyle{every picture}=[scale=1.0*1]
		\input{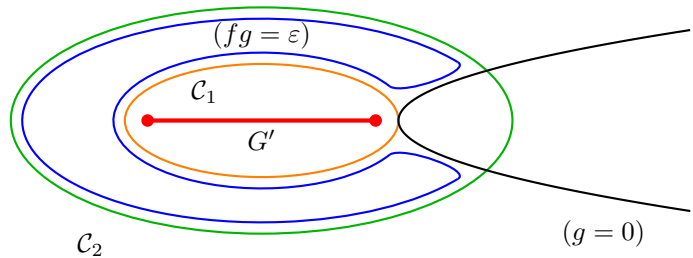}
	\end{center}
	\caption{Construction that does not satisfy the desired conclusion.}
\end{figure}

\end{remark}

\section{General domains of weakly finite type}
\label{sec:general}

We consider the case of $\mathcal{D}$ being any real algebraic domain. Each connected 
component of $\mathcal{C}=\partial\mathcal{D}$ is either an \emph{oval} (a component 
homeomorphic to a circle)
or a \emph{line} (in fact a component homeomorphic to an affine line). An essential question 
in plane real algebraic geometry is to study the relative position of these components. 

\subsection{Combinatorics}      

Let $(\mathcal{P} ,\pi)$ be a vertical plane and a generic domain $\mathcal{D} \subset \mathcal{P}$ of weakly finite type. The next result shows that the Poincar\'e--Reeb graph of $\mathcal{D}$ allows to recover the numbers of lines and ovals of $\mathcal{C} = \partial \mathcal{D}$.

\begin{proposition}\label{prop:bordBetti} 
~
\begin{itemize}
    \item The number of lines in $\mathcal{C}$ is:
       $$\#\{\text{arrows without feathers}\} + \frac12 \#\{\text{arrows with simple feathers}\}.$$
  \item The number of ovals in $\mathcal{C}$ is:
       $$b_0(G)+b_1(G) - c$$
   where $b_0(G)$ is the number of connected components of $G$, $b_1(G)$ is the number 
   of independent cycles in $G$ and $c$ is the number of connected components of 
   $G$ having an arrowhead vertex.
\end{itemize} 
\end{proposition}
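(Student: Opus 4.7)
The plan is to prove the two formulas separately, by independent arguments.

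For the first formula, I would apply a double counting to the branches of $\mathcal{C}$ at infinity. Each line of $\mathcal{C}$ has exactly two branches at infinity, each escaping towards $x = +\infty$ or $x = -\infty$ (there are no vertical asymptotes, by the properness of $\pi_{|_{\mathcal{C}}}$), so the total number of branches is $2L$, where $L$ is the number of lines. On the other hand, every arrow of $G$ records an unbounded ``strip of $\mathcal{D}$ at infinity'': once $|x|$ exceeds all topological critical values, the vertical cross-section of $\mathcal{D}$ along that strip is a single interval whose type is encoded by the feathers---compact (no feather), half-infinite (simple feather), or bi-infinite (double feather). The branches of $\mathcal{C}$ that border this strip at infinity are exactly the ones bounding the corresponding interval: two for a compact one, one for a half-infinite one, zero for a bi-infinite one. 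Since every branch belongs to exactly one arrow (the one on its $\mathcal{D}$-side), we obtain
\[ 2L \;=\; 2\,\#\{\text{arrows without feathers}\} \;+\; \#\{\text{arrows with simple feathers}\}, \]
and dividing by two yields the first formula.

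For the second formula, I would argue by Euler characteristic. First, I would extend Proposition \ref{th:graph} to the weakly finite type setting: a Poincar\'e--Reeb graph $G$ in the source (in the sense of Subsection \ref{ssec:PRsource}) sits inside $\mathcal{D}$ as a section of $\rho_{\mathcal{D}}$, and every fiber of $\rho_{\mathcal{D}}$ is a contractible interval (a point, a compact segment, a closed half-line, or a full line) meeting $G$ in exactly one point. Sliding along these vertical fibers gives a deformation retraction of $\mathcal{D}$ onto $G$, whence $\chi(\mathcal{D}) = \chi(G) = b_0(G) - b_1(G)$ and $b_0(\mathcal{D}) = b_0(G)$. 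Next, for each connected component $\mathcal{D}_i$ with $o_i$ ovals on its boundary, I would compute $\chi(\mathcal{D}_i)$ topologically. If $\mathcal{D}_i$ is compact, its boundary consists only of ovals and the classification of compact connected planar surfaces with boundary gives $\chi(\mathcal{D}_i) = 2 - o_i$. If $\mathcal{D}_i$ is non-compact, connectedness forces $\mathcal{D}_i$ to lie on the exterior of each of its boundary ovals (otherwise $\mathcal{D}_i$ would be contained in the bounded inside of an oval, hence compact); then each oval surrounds a unique bounded connected component of $\mathbb{R}^2 \setminus \mathcal{D}_i$, and conversely each bounded complementary component is bounded by a single oval (otherwise $\mathcal{D}_i$ would disconnect), so $b_1(\mathcal{D}_i) = o_i$ and $\chi(\mathcal{D}_i) = 1 - o_i$.

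Letting $c$ be the number of non-compact components of $\mathcal{D}$, which---under the weakly finite type hypothesis ruling out vertical lines and vertical asymptotes of $\mathcal{C}$---are exactly the components whose image in $G$ contains an arrowhead vertex, summing over components gives
\[ \chi(\mathcal{D}) \;=\; 2\big(b_0(G)-c\big) + c - O \;=\; 2\,b_0(G) - c - O, \]
where $O$ is the total number of ovals. Equating this with $\chi(G) = b_0(G) - b_1(G)$ and solving for $O$ yields the desired formula $O = b_0(G) + b_1(G) - c$.

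The main obstacle I anticipate is the careful justification that the deformation retraction of $\mathcal{D}$ onto a Poincar\'e--Reeb graph in the source extends to the non-compact setting, especially near fibers that are half-lines or full lines. A convenient alternative, bypassing this issue, would be to mimic the Viro integration argument of Proposition \ref{th:graph}: every fiber of $\rho_{\mathcal{D}}$ is an interval of ordinary Euler characteristic $1$, so the pushforward $(\rho_{\mathcal{D}})_* \mathbf{1}_{\mathcal{D}}$ equals $\mathbf{1}_{G}$ and the equality $\chi(\mathcal{D}) = \chi(G)$ follows at once.
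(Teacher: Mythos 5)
Your proof is correct, but your argument for the oval count takes a genuinely different route from the paper's. For the line count, the paper's proof is the same observation as yours compressed into one line (``each line contributes to either an arrow without feathers or to two arrows with simple feathers''); your version, organized as a double count of the $2L$ branches at infinity against the finite endpoints of the limiting vertical cross-sections of the arrows' strips, is the cleaner formulation, since a single line can in fact contribute branches to two \emph{different} featherless arrows (e.g.\ a horizontal strip bounded by two lines), so the count really has to be done branch by branch as you do. For the oval count, the paper argues by induction on the number of ovals, adding an innermost oval at each step and checking how $b_0(G)$, $b_1(G)$ and $c$ change according to whether the new oval's interior lies in $\mathcal{D}$ or in its complement; you instead compute $\chi(\mathcal{D})$ twice, once as $\chi(G)=b_0(G)-b_1(G)$ via a fibrewise deformation retraction of $\mathcal{D}$ onto a Poincar\'e--Reeb graph in the source, and once component by component from the classification of planar surfaces ($2-o_i$ for compact components, $1-o_i$ for non-compact ones), then solve for the total number of ovals. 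Your route gives a closed-form computation with no induction and makes the meaning of $c$ transparent (it is the number of non-compact components of $\mathcal{D}$, which you correctly identify with the components of $G$ carrying an arrowhead using the properness of $\pi_{|\mathcal{C}}$); the paper's induction avoids any discussion of the homotopy type of $\mathcal{D}$ and of non-compact fibres, at the cost of implicitly assuming every configuration is reachable by adjoining innermost ovals one at a time.

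One caveat: the ``convenient alternative'' you propose at the end, via Viro integration, does not work as stated in the non-compact setting. The pushforward/Fubini property of Euler integration holds for the additive Euler characteristic with compact supports, for which a closed half-line has value $0$ and a full line has value $-1$, not $1$; hence $(\rho_{\mathcal{D}})_*\mathbf{1}_{\mathcal{D}}$ differs from $\mathbf{1}_{G}$ along the feathered edges, and in any case the identity you need is for the homotopy-invariant $\chi=b_0-b_1$, not for $\chi_c$. So you should keep the deformation retraction as the main argument; it does extend band by band to the half-line and full-line fibres, which is exactly the point you flagged as needing care.
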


\begin{example}
Let us consider Figure \ref{fig:ovals}. One arrowhead without feathers and (half of) two arrowheads with simple feathers, 
give a number of two lines.  As $b_0(G)=3$, $b_1(G)=2$ and $c=2$, we see that 
$b_0(G)+b_1(G) - c =  3$ is indeed the number of ovals in $\mathcal{C}$.

\begin{figure}[H]
	\begin{center}
		\small
		\tikzstyle{every picture}=[scale=1.0*0.9]
		\input{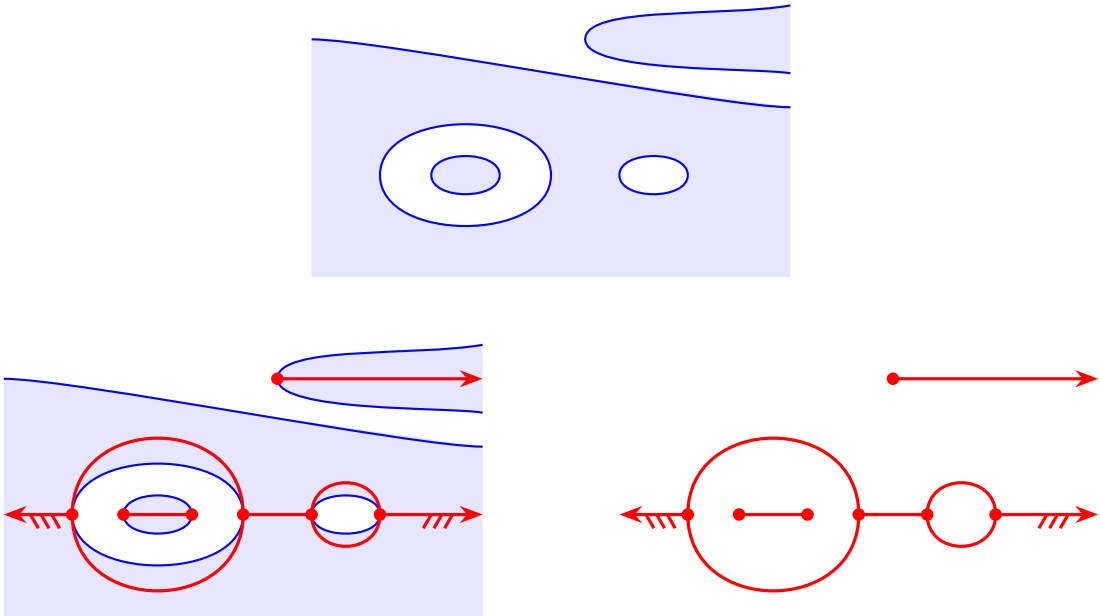}
	\end{center}	
  \caption{Ovals and lines and their Poincar\'e--Reeb graph.\label{fig:ovals}}
\end{figure}

\end{example}

\begin{proof}
For the first point we just notice that each line contributes to either an arrow without feathers or to two arrows with simple feathers.

For the second point, the proof is by induction on the number of ovals.
If there are no ovals, then $b_0(G)=c$, and $b_1(G)=0$, therefore the formula is valid.
Now start with a configuration $\mathcal{C} = \partial \mathcal{D}$ and add an oval 
that does not contain any other ovals. Let $\mathcal{C}'$ be the new curve and $G'$ its graph. 
Either the interior of the new oval is in $\mathcal{D}$, in which case $b_0(G')=b_0(G)$ 
and $b_1(G')=b_1(G)+1$, or the interior of the new oval is in 
$\mathcal{P} \setminus \mathcal{D}$, in which case $b_0(G')=b_0(G)+1$ and 
$b_1(G')=b_1(G)$. In both cases $c(G')=c(G)$. 
Conclusion: $b_0(G')+b_1(G') - c= (b_0(G)+b_1(G) - c) + 1$.
\end{proof}

\subsection{Interior and exterior graphs of domains of weakly finite type}    

Let $\mathcal{D}$ be a generic domain of weakly finite type in a vertical plane $(\mathcal{P}, \pi)$. 
Then the closure $\mathcal{D}^c$ of $\mathcal{P} \setminus \mathcal{D}$ in $\mathcal{P}$ 
is again a domain of weakly finite type, as $\partial \mathcal{D} = \partial \mathcal{D}^c$. 
We say that the Poincar\'e--Reeb graph $G$ of $\mathcal{D}$ is the \emph{interior graph} 
of $\mathcal{D}$ 
and that the Poincar\'e--Reeb graph $G^c$ of $\mathcal{D}^c$ is the \emph{exterior graph} 
of $\mathcal{D}$. 

\begin{figure}[H]
	\begin{center}
		\small
		\tikzstyle{every picture}=[scale=1.0*0.8]
		\input{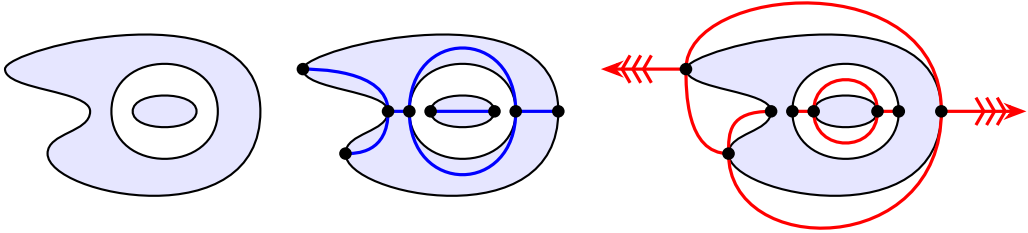}
	\end{center}	
  \caption{Interior and exterior graphs of a domain of weakly finite type.}
\end{figure}

In the next proposition, Poincar\'e-Reeb graphs are to be considered in the sense of 
Definition \ref{def:domainOfWeaklyFiniteType}, that is, as Poincar\'e-Reeb graphs 
in the source:

\begin{proposition}
\label{prop:intExt}
The interior graph $G$ of a domain $\mathcal{D}$ of weakly finite type determines its exterior graph $G^c$.
\end{proposition}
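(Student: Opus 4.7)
The plan is to reconstruct the vertical equivalence class of $\mathcal{D}$ from the interior graph $G$, since then $\mathcal{D}^c = \overline{\mathcal{P} \setminus \mathcal{D}}$ is canonically determined, and applying the Poincar\'e--Reeb construction of Subsection \ref{ssec:PRsource} to $\mathcal{D}^c$ yields $G^c$. To this end, I would adapt the thickening argument from the proof of Proposition \ref{th:equiv} to the weakly finite type setting. Starting from $G$ embedded in $\mathcal{P}$ as a Poincar\'e--Reeb graph in the source, together with its arrow and feather decorations, one builds a closed neighborhood $\overline{\mathcal{D}}$ of $G$ by thickening along vertical segments: at each valency-$3$ interior vertex, a vertical segment inside $\overline{\mathcal{D}}$ joins two incident bands; at each valency-$1$ exterior vertex, the band above the adjacent edge closes off in a short vertical segment beyond the vertex; along each unbounded edge of $G$ terminating in an arrowhead, the feather decoration prescribes whether $\overline{\mathcal{D}}$ extends to $+\infty$ or $-\infty$ in the vertical direction, or stays vertically bounded. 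The resulting $\overline{\mathcal{D}}$ is vertically equivalent to $\mathcal{D}$, so $G$ indeed pins down the vertical equivalence class of $\mathcal{D}$.

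Next, I would observe that the operation of taking the closure of the complement is compatible with vertical equivalences: if $\Phi$ is a vertical equivalence sending $\mathcal{D}$ to $\mathcal{D}'$, then $\Phi$ also sends $\mathcal{D}^c$ to $(\mathcal{D}')^c$, since $\Phi$ is in particular a homeomorphism of $\mathcal{P}$ onto $\mathcal{P}'$ preserving vertical fibers. Therefore $\mathcal{D}^c$ is determined up to vertical equivalence by $\mathcal{D}$, hence by $G$. Since the Poincar\'e--Reeb graph is itself an invariant of the vertical equivalence class of a domain of weakly finite type, $G^c$ is determined up to vertical equivalence by $G$, which is exactly the content of the proposition.

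The main delicate point I anticipate is the faithful encoding of the asymptotic behavior of $\mathcal{D}$ at infinity by the arrow and feather decorations of $G$: one must verify that these decorations fully determine the vertical shape of $\mathcal{D}$ over each unbounded region of $\mathcal{P}$, and that the complementary decorations on $G^c$ are then unambiguously dictated by the rule that each vertical ray escaping to $\pm\infty$ is either in $\mathcal{D}$ (contributing a feather to $G$) or in $\mathcal{D}^c$ (contributing a feather to $G^c$). This combinatorial correspondence moreover yields an explicit recipe for $G^c$ from $G$, in which valency-$1$ and valency-$3$ vertices exchange roles with their horizontal orientations reversed, and in which the three arrow/feather configurations of Subsection \ref{ssec:combnon-compPR} are transformed into each other according to the complementarities noted there.
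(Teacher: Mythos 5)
Your argument is correct in outline but takes a genuinely different route from the paper's. The paper derives $G^c$ from $G$ directly and locally: the two graphs share their non-arrowhead vertices (the topological critical points of $\mathcal{C}$), the \emph{trident rule} dictates the germ of $G^c$ at each vertex (including arrowheads) from the germ of $G$ there, and the edges are then completed in the only way possible up to vertical isotopy. You instead factor the statement through the reconstruction of $\mathcal{D}$ itself: the decorated graph $G$ determines $\mathcal{D}$ up to vertical equivalence via thickening, complementation commutes with vertical equivalences, and the Poincar\'e--Reeb graph is an invariant of the class. Your route is conceptually cleaner and explains \emph{why} the proposition holds, but it leans on a determination result ($G \approx_v G' \Rightarrow \mathcal{D} \approx_v \mathcal{D}'$) that the paper establishes only for compact connected domains (Proposition \ref{th:equiv}) and simply connected domains of weakly finite type (Proposition \ref{prop:equivGeneral}); for the general, possibly disconnected and non-simply-connected domains of Section \ref{sec:general} you would have to carry out the thickening for an arbitrary embedded decorated transversal graph, handling nested components, cycles and vertically unbounded bands consistently --- plausible, and you correctly flag the asymptotic encoding as the delicate point, but it is an extension to be proved rather than a citation. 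The paper's local argument avoids reconstructing $\mathcal{D}$ altogether, at the price of leaving the uniqueness of the edge completion (its ``second step'') equally informal. Note finally that your closing observation --- feathers of $G$ and $G^c$ partitioning the vertical rays, valency-$1$ and valency-$3$ vertices exchanging roles --- is exactly the paper's trident rule, so the two proofs converge in their combinatorial output.
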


\begin{proof}
The two graphs share the same non-arrowhead vertices.
The local situation around a  non-arrowhead vertex is in accordance to the \emph{trident rule}, where an exterior vertex is replaced by an interior vertex and vice-versa (see Figure \ref{fig:trident}). We also extend this rule to arrowhead vertices. 

\begin{figure}[H]
	\begin{center}
		\small
		\tikzstyle{every picture}=[scale=1.0*0.8]
		\input{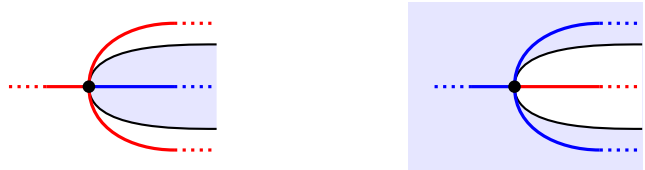}
	\end{center}	
  \caption{The trident rule.\label{fig:trident}}
\end{figure}

Now we derive $G^c$ from $G$ in two steps.

\emph{First step:} make a local construction of the beginning of the edges of $G^c$ according to the trident rule (see Figure \ref{fig:starttrident}).

\begin{figure}[H]
	\begin{center}
		\small
		\tikzstyle{every picture}=[scale=1.0*0.8]
		\input{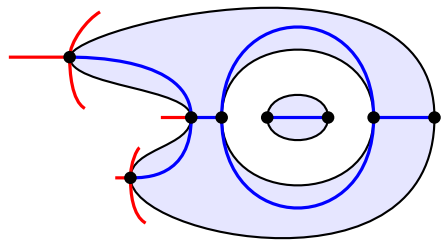}
	\end{center}	
  \caption{The trident rule applied at some vertices (here three vertices are completed).\label{fig:starttrident}}
\end{figure}

\emph{Second step:} complete each edge. It can be done in only one way 
up to vertical isotopies (see for instance Figure \ref{fig:complete}).

\begin{figure}[H]
	\begin{center}
		\small
		\tikzstyle{every picture}=[scale=1.0*0.8]
		\input{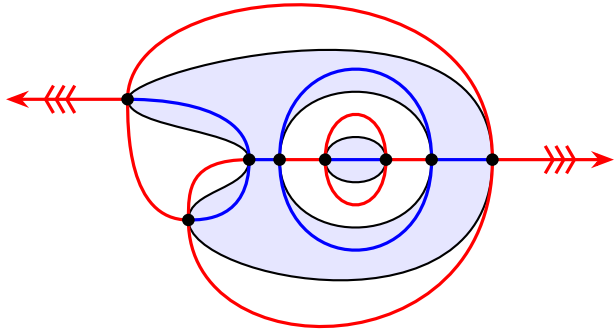}
	\end{center}	
  \caption{Completed exterior graph.\label{fig:complete}}
\end{figure}

\end{proof}


\bibliographystyle{amsplain}
\bibliography{references.bib}

\end{document}